\documentclass[11pt,a4paper]{amsart}
\usepackage{mathtools,amssymb,amsmath,amsopn,amsthm,graphicx,MnSymbol,centernot,stmaryrd,array}
\usepackage{tikz}
\usetikzlibrary{calc}
\usepackage{tikz-cd}
\usepackage{enumitem}
\usepackage{amsaddr}
\usepackage{etoolbox}
\patchcmd{\subsection}{-.5em}{.5em}{}{}
\usetikzlibrary{matrix}
\begin{document}

\newtheorem{definition}{Definition}[section]
\newtheorem{definitions}[definition]{Definitions}
\newtheorem{deflem}[definition]{Definition and Lemma}
\newtheorem{lemma}[definition]{Lemma}
\newtheorem{proposition}[definition]{Proposition}
\newtheorem{theorem}[definition]{Theorem}
\newtheorem{corollary}[definition]{Corollary}
\newtheorem{algo}[definition]{Algorithm}
\theoremstyle{remark}
\newtheorem{rmk}[definition]{Remark}
\theoremstyle{remark}
\newtheorem{remarks}[definition]{Remarks}
\theoremstyle{remark}
\newtheorem{notation}[definition]{Notation}
\newtheorem{assumption}[definition]{Assumption}
\theoremstyle{remark}
\newtheorem{example}[definition]{Example}
\theoremstyle{remark}
\newtheorem{examples}[definition]{Examples}
\theoremstyle{remark}
\newtheorem{dgram}[definition]{Diagram}
\theoremstyle{remark}
\newtheorem{fact}[definition]{Fact}
\theoremstyle{remark}
\newtheorem{illust}[definition]{Illustration}
\theoremstyle{remark}
\newtheorem{que}[definition]{Question}
\theoremstyle{definition}
\newtheorem{conj}[definition]{Conjecture}
\newtheorem{scho}[definition]{Scholium}
\newtheorem{por}[definition]{Porism}
\DeclarePairedDelimiter\floor{\lfloor}{\rfloor}

\renewenvironment{proof}{\noindent {\bf{Proof.}}}{\hspace*{3mm}{$\Box$}{\vspace{9pt}}}
\author[Sardar]{Shantanu Sardar}
\address{CEMIM, FCEyN \\Universidad Nacional de Mar del Plata\\ CONICET, Argentina}
\email{shantanusardar17@gmail.com}
\title{{Super-decomposable pure-injective modules over some Jacobian algebras}}
\keywords{}
\subjclass[2020]{}

\begin{abstract}
Existence of superdecomposable pure-injective modules reflects complexity in the category of finite-dimensional representations over an algebra. Such an existence occurs when an algebra is non-domestic; a conjecture due to M. Prest. G. Puniski confirms the conjecture for non-domestic string algebras. Geiß, Labardini-Fragoso and Schr\"oer show that every Jacobian algebra associated with a triangulation of a closed surface with marked points is finite-dimensional and tame. We show that, excluding only the case of a sphere with four (or fewer) punctures, there exists a special family of pointed modules, called an independent pair of dense chains of pointed modules. In the process, we show the existence of such an independent pair in a non-domestic skew-gentle algebra and (skew) Brauer graph algebras by showing that the Galois semi-covering functor and trivial extension preserve such pairs. Then it follows from a result of M. Ziegler that there exists a superdecomposable pure-injective module if the algebraically closed field is countable.
\end{abstract}

\maketitle
\section{Introduction}
Classification of finite-dimensional associative algebras $\Lambda$ has been an ongoing project for a long time among the representation theorists. Roughly speaking, $\Lambda$ is tame if a description of all finite-dimensional $\Lambda$-modules is available. Otherwise, $\Lambda$ is wild. Drozd's theorem in \cite{Dr79} states a well-known dichotomy that every finite-dimensional algebra over an algebraically closed field $K$ is either tame or wild (see definitions in \cite{SiSk07}) but not both. The class of tame algebras may be stratified with respect to the growth of a tame algebra, thanks to A. Skowroński. This stratification distinguishes the classes of algebras of domestic, linear, polynomial and exponential growth in \cite{Sk90}. The growth of a tame algebra $\Lambda$ reflects the complexity of the category $\Lambda$-mod of all finite-dimensional left $\Lambda$-modules.

An independent pair of dense chains of pointed $\Lambda$-modules allows us to formulate a handy sufficient condition for the existence of a super-decomposable (i.e., without indecomposable direct summands) pure-injective module. M. Ziegler \cite{Zieg84} established a fundamental existing criterion for such modules, which states that a countable ring $\Lambda$ has a super-decomposable pure-injective module if and only if the width of the lattice of all pp-formulae is undefined. The later statement can be formulated in terms of the width of the lattice of all pointed finitely presented $\Lambda$-modules. The existence of such modules could assist with the classification problem as described in the following conjectures due to Prest. The first one is that $\Lambda$ is tame if and only if it does not possess a super decomposable pure-injective module. The second conjecture \cite{Pr13} states that an algebra $\Lambda$ is of domestic representation type iff there is no super-decomposable pure-injective module. G. Puniski refutes the first conjecture for non-domestic string algebras \cite{Puni04}. Here, we show the existence of such a module over several tame algebras-some Jacobian algebras, non-domestic skew-gentle algebras, their trivial extension algebras and (skew-) Brauer graph algebras over a countable field.
 
\noindent{\textbf{Some cases with $\mathrm{KG}(\Lambda)=\infty$:}} Here is a list of algebras where the width of the lattice of all pointed modules is infinite. As a result, their KG dimension is undefined. M. Prest \cite{Pr88} proves that super-decomposable pure-injective modules exist over strictly wild algebras, whereas the existence is shown in wild algebras by L. Gregory and M. Prest \cite{GP}. In \cite{Puni04}, G. Puninski shows that super-decomposable pure-injectives exist over string algebras of non-polynomial growth if the base field is countable. Later, he provide a explicit construction of such a module in \cite{Pu08} for some string algebra of non-polynomial growth over an arbitrary field. R. Harland proves in \cite{Ha11} that such a module exists for tubular algebras. S. Kasjan and G. Pastuszak show in \cite{KaPa14} that there are super-decomposable pure-injective modules over all strongly simply connected $K$-algebras of non-polynomial growth. The main result of \cite{Pa14} asserts that if $\Lambda$ is a strongly simply connected $K$-algebra and $K$ is countable, then $\Lambda$ is of non-domestic type if and only if there exists a super-decomposable pure-injective $\Lambda$-module. The representation-infinite domestic standard self-injective algebras over algebraically closed fields do not have super-decomposable pure-injective modules (see \cite[Theorem 8.3]{Pa19}). All the known results support the second conjecture of Prest concerning super-decomposable pure-injective modules. Here, we enlarge the list by showing the existence of an independent pair of dense chains of pointed modules and a super-decomposable pure-injective module over non-domestic skew-gentle algebras and some Jacobian algebras. 

A potential $W$ for a quiver $Q$ is, roughly speaking, a linear combination of cyclic paths in the complete path algebra $K\langle \langle Q\rangle \rangle$. The Jacobian algebra $P(Q, W)$ associated with a quiver with a potential $(Q, W)$  is the quotient of the complete path algebra $K\langle \langle Q\rangle \rangle$ modulo the Jacobian ideal $J(W)$. Here, $J(W)$ is the topological closure of the ideal of $K\langle \langle Q\rangle \rangle$ which is generated by the cyclic derivatives of $W$ with respect to the arrows of $Q$. Quivers with potential were introduced in \cite{DWZ08} to construct additive categorifications of cluster algebras with skew-symmetric exchange matrix. For that, the potential for $Q$ must be non-degenerate, i.e., it can be mutated along with the quiver arbitrarily. In \cite{FST08}, the authors introduced, under some mild hypothesis, for each oriented surface with marked points $(S, M)$, a mutation finite cluster algebra with skew-symmetric exchange matrices. More precisely, each triangulation $T$ of $(S, M)$ by tagged arcs corresponds to a cluster and the corresponding exchange matrix is conveniently coded into a quiver $Q(T)$. Labardini-Fragoso in \cite{La09} enhanced this construction by introducing potentials $W(T)$ and showed that these potentials are compatible with mutations. In particular, these potentials are non-degenerate. Ladkani \cite{La12} showed that for surfaces with empty boundary and a triangulation $T$ that has no self-folded triangles, the Jacobian algebra $P(Q(T), W(T))$ is symmetric (and, in particular, finite-dimensional). It follows that for any triangulation $T'$ of a closed surface $(S, M)$, the Jacobian algebra $P(Q(T'), W(T'))$ is weakly symmetric \cite{HI11}. In \cite{GLS16}, the authors use a degeneration argument to show that these algebras are tame. Moreover, Y. Valdivieso-D\'{i}az \cite{Y16} showed that these algebras, except for the case of a sphere with four (or fewer) punctures, exhibit exponential growth. The algebras associated with a sphere with five punctures are associated with a quotient of a skew-gentle algebra, and we produce a `non-symmetric' independent dense pair of pointed modules (See Definition \ref{nonsy}) over the associated gentle algebra (Theorem \ref{NSDP}), which is non-domestic (Proposition \ref{NDSG}), whereas in the rest of the cases, they are presented as the quotient of non-domestic string algebras (Proposition \ref{NDS}). We show the existence of an independent pair of dense chains of pointed modules over a skew group algebra when the given algebra has a non-symmetric one.

\begin{theorem}\label{NSIDPM1}
Suppose $\mathrm{char}(K)\neq 2$ and $F_\lambda: \mathrm{mod}\mbox{-}\Lambda \to \mathrm{mod}\mbox{-}\bar\Lambda$ is a Galois semi-covering and the module $\theta$ is semisimple. 
\begin{enumerate}
\item If $((M_q, \chi_{M_q})_{q\in L_1}, (N_t, \chi_{N_t})_{t\in L_2})$ is a non-symmetric pair of dense chains of pointed modules over $\Lambda$, then $((F_\lambda(M_q), \chi_{F_\lambda(M_q)})_{q\in L_1}; (F_\lambda (N_t); \chi_{F_\lambda(N_t)})_{t\in L_2})$ is an independent pair of dense chains of pointed modules over $\bar{\Lambda}$.

\item If $\overline{(M_p; \chi_{M_p})}_{p\in L_1}$ is a wide poset of $\theta$-pointed modules in $P_\Lambda^\theta$, then $\overline{(F_\lambda(M_p); \chi_{F_\lambda(M_p))}}_{p\in L_1}$ is a wide poset of $\tilde{\theta}$-pointed modules in $P^{\tilde{\theta}}_{\bar{\Lambda}}$.
\end{enumerate}
\end{theorem}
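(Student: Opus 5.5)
The plan is to reduce both parts to two properties of the Galois semi-covering $F_\lambda$. First, $F_\lambda$ carries homomorphisms of pointed modules to homomorphisms of pointed modules; this gives transport of the order. Second, a homomorphism between images under $F_\lambda$ decomposes along the $G$-orbit of the source, where $G=\mathbb{Z}_2$ acts on $\Lambda$ and $\bar\Lambda=\Lambda G$. Concretely, $F_\lambda$ has the push-down/pull-up adjunction
\[
\mathrm{Hom}_{\bar\Lambda}(F_\lambda M, F_\lambda N)\cong \bigoplus_{g\in G}\mathrm{Hom}_\Lambda({}^gM, N).
\]
Here $\mathrm{char}(K)\neq 2$ ensures that $\bar\Lambda$-mod is the equivariantization and that this decomposition holds. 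The hypothesis that $\theta$ is semisimple lets us send the point $\chi_M\colon\theta\to M$ to $\chi_{F_\lambda M}=F_\lambda(\chi_M)$ composed with the canonical map $\tilde\theta\to F_\lambda\theta$. This produces $\tilde\theta$-pointed modules $(F_\lambda M,\chi_{F_\lambda M})$, and the formula restricts to an analogous decomposition for pointed homomorphisms.

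\emph{Step 1: the chains stay dense chains.} If $(M_q,\chi_{M_q})\le (M_{q'},\chi_{M_{q'}})$, i.e.\ there is $f$ with $f\chi_{M_q}=\chi_{M_{q'}}$, then $F_\lambda f$ witnesses the same relation after applying $F_\lambda$. So order is preserved. Strictness must also be preserved: if $F_\lambda M_{q'}\to F_\lambda M_q$ sends point to point, decompose it through the formula into components ${}^gM_{q'}\to M_q$. Only the $g=e$ component sees the point $\chi_{M_{q'}}$ in the relevant summand of $\tilde\theta$; the $g\neq e$ component maps ${}^g\theta$, which is disjoint from $\theta$ by semisimplicity and the free action on the relevant simples. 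Hence the identity component already gives $M_{q'}\ge M_q$ over $\Lambda$, a contradiction. The index sets $L_1,L_2$ are unchanged, so density carries over.

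\emph{Step 2: independence.} I expect this to be the main obstacle. We must show that for $q\in L_1$ and $t\in L_2$ no pointed morphism $F_\lambda M_q\to F_\lambda N_t$ exists, and that the relevant sum and intersection conditions in the definition of an independent pair hold. Applying the decomposition, a pointed morphism over $\bar\Lambda$ yields pointed morphisms over $\Lambda$ either $M_q\to N_t$ or ${}^gM_q\to N_t$, with $g$ the generator. The first kind is excluded by independence over $\Lambda$. The second kind is exactly what the \emph{non-symmetric} hypothesis of Definition \ref{nonsy} should exclude: the twisted chain ${}^g(M_q)$ is incomparable with the $N_t$ in both directions, and likewise with $M$ and $N$ swapped. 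I would first verify that this is precisely how Definition \ref{nonsy} is phrased. If instead it is phrased via supports of strings avoiding their $G$-images, I would prove that ${}^gM_q$ and $N_t$ admit no point-preserving map by a support argument at the distinguished vertex.

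\emph{Part (2).} Part (2) is the same argument in the language of the posets $P^\theta_\Lambda$ and $P^{\tilde\theta}_{\bar\Lambda}$. $F_\lambda$ induces a poset map $\overline{(M,\chi_M)}\mapsto\overline{(F_\lambda M,\chi_{F_\lambda M})}$, well defined by Step 1. It is order-reflecting on the image by the same component argument, provided the elements of the wide poset are non-symmetric in the above sense, which is presumably built into the wideness hypothesis. Wideness is defined via the existence of a dense chain with independent pairs between its intervals, iterated. Because wideness is witnessed entirely by dense chains and independent pairs, Steps 1 and 2 applied intervalwise transfer each witness, giving a wide poset in $P^{\tilde\theta}_{\bar\Lambda}$.
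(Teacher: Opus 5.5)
\textbf{There is a genuine gap.} Your proof establishes properties that are not the ones the paper's definitions require. It also skips the one property that can actually fail for a Galois semi-covering.

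By Definitions \ref{dense} and \ref{densepair}, you must show the following. First, $\mathrm{End}_{\bar\Lambda}(F_\lambda M_q)$ and $\mathrm{End}_{\bar\Lambda}(F_\lambda N_t)$ are local. Second, $\mathrm{End}_{\bar\Lambda}\bigl(F_\lambda(M_q)\ast F_\lambda(N_t)\bigr)$ is local. Third, these pushouts are pairwise non-isomorphic when one index is fixed. Independence says nothing about the absence of pointed maps $F_\lambda M_q\to F_\lambda N_t$, nor about ``sum and intersection conditions''. Consequently, independence over $\Lambda$ does not exclude your ``first kind'' of map.

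Definition \ref{nonsy} is also not an incomparability condition. It states ${}^gM_{q_1}\not\cong M_{q_2}$, ${}^gN_{t_1}\not\cong N_{t_2}$ and ${}^gP_{q_1t_1}\not\cong P_{q_2t_2}$, including the cases $q_1=q_2$ and $t_1=t_2$.

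Locality is exactly where semi-coverings differ from coverings. By Remark \ref{indsk}, if $X$ is indecomposable and ${}^gX\cong X$, then $F_\lambda X$ splits into $|G|$ summands, so its endomorphism ring is not local. Your Step 1 never uses non-symmetry and never checks locality. Yet without non-symmetry the conclusion is false: if ${}^gM_q\cong M_q$, then $(F_\lambda M_q)$ is not even a member of a dense chain.

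\textbf{The missing argument, which is the paper's proof.} Exactness of $F_\lambda$ (Remark \ref{faex}) gives $F_\lambda(M_q)\ast F_\lambda(N_t)\cong F_\lambda(M_q\ast N_t)$ (Proposition \ref{pm}(1)). Then Remark \ref{indsk} does the rest, combined with non-symmetry in two ways:
\begin{enumerate}
\item With equal indices, non-symmetry makes $F_\lambda M_q$, $F_\lambda N_t$ and $F_\lambda P_{qt}$ indecomposable.
\item With distinct indices, non-symmetry gives non-isomorphism, since $F_\lambda X\cong F_\lambda Y$ for indecomposables forces $Y\cong X$ or $Y\cong{}^gX$.
\end{enumerate}

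\textbf{What is sound in your proposal.} Your projection onto the $g=e$ summand of $\bigoplus_g\mathrm{Hom}_\Lambda({}^gX,Y)$ is a correct and useful tool, and it needs no ``free action on simples''. Under the adjunction, $F_\lambda(\chi_Y)$ corresponds to $(\chi_Y,0,\dots,0)$. The pointed condition on the $e$-component is then simply $\phi_e\chi_X=\chi_Y$. This shows that $F_\lambda$ reflects the order, and hence preserves strictness and pointed non-isomorphism. It is in fact a sounder justification of the reflection half of Proposition \ref{pm}(2) than faithfulness alone.

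\textbf{Part (2).} You misstate wideness. It means that for every $a<b$ in the sublattice there are incomparable $c,d$ with $a<c\ast d<c\oplus d<b$. No non-symmetry is built into the hypothesis. The natural route is as follows. $F_\lambda$ preserves $\oplus$ by additivity and $\ast$ by exactness, and it both preserves and reflects $\le$ by your projection argument. Hence it is a lattice order-embedding on $\equiv$-classes, and such an embedding carries strict inequalities and incomparability, and hence wideness, over to $P^{\tilde\theta}_{\bar\Lambda}$.
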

 
As a consequence of the above theorem and the Ziegler criterion stated in Theorem \ref{Sup}, we obtain the following result for skew-gentle algebras.
\begin{theorem}\label{SDSG}
Suppose $\bar{\Lambda}$ is a skew-gentle algebra and $\Lambda$ is the associated gentle algebra. If $\Lambda$ is non-domestic, then $\bar{\Lambda}$ has a superdecomposable module. Moreover, $\mathrm{KG}(\bar\Lambda)$ is infinite.  
\end{theorem}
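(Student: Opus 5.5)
The plan is to assemble Theorem \ref{SDSG} from three ingredients: the non-symmetric dense pair over the gentle algebra $\Lambda$, the transfer result Theorem \ref{NSIDPM1}, and Ziegler's criterion Theorem \ref{Sup}.

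\emph{Step 1: realise $\bar\Lambda$ as the target of a Galois semi-covering.} A skew-gentle algebra $\bar\Lambda$ is, by the standard description, Morita equivalent to a skew group algebra $\Lambda * \mathbb{Z}_2$. Here $\Lambda$ is obtained from the gentle algebra by attaching the special loops and letting $\mathbb{Z}_2$ act. This setting gives a Galois semi-covering $F_\lambda:\mathrm{mod}\mbox{-}\Lambda\to\mathrm{mod}\mbox{-}\bar\Lambda$ (push-down along the group action). We will assume, as the paper implicitly does when it writes $\mathrm{char}(K)\neq 2$, that the characteristic is not $2$. The module $\theta$ will be taken semisimple, namely the direct sum of the simples at the vertices where the points of our pointed modules sit. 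These are exactly the hypotheses of Theorem \ref{NSIDPM1}.

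\emph{Step 2: produce the input pair over $\Lambda$.} Since $\Lambda$ (equivalently the associated gentle algebra) is non-domestic, Theorem \ref{NSDP} applies. It yields a non-symmetric pair of dense chains of pointed modules $((M_q,\chi_{M_q})_{q\in L_1},(N_t,\chi_{N_t})_{t\in L_2})$ over $\Lambda$. This is the step I expect to be the main obstacle. The argument has to pass from non-domesticity to the existence of two distinct bands whose iterated combinations realise dense chains, using Proposition \ref{NDSG}. It must also ensure that the string modules chosen are not interchanged by the $\mathbb{Z}_2$-action, which is the ``non-symmetric'' condition of Definition \ref{nonsy}. If that condition fails, $F_\lambda$ could identify the two chains. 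The natural first attempt is to pick the two bands so that their words avoid the special loops entirely, or use them asymmetrically. Then the action sends each string module to one with a distinct word, and non-symmetry follows.

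\emph{Step 3: transfer to $\bar\Lambda$ and conclude.} Theorem \ref{NSIDPM1}(1) now gives an independent pair of dense chains of pointed modules over $\bar\Lambda$, with images $(F_\lambda(M_q),\chi_{F_\lambda(M_q)})$ and $(F_\lambda(N_t),\chi_{F_\lambda(N_t)})$. An independent pair of dense chains forces the width of the lattice of pointed finitely presented $\bar\Lambda$-modules to be undefined. Equivalently, by Theorem \ref{NSIDPM1}(2), the wide poset produced in $P^{\tilde\theta}_{\bar\Lambda}$ shows that the lattice of pp-formulae has undefined width. Hence $\mathrm{KG}(\bar\Lambda)=\infty$, since finite KG-dimension would imply defined width. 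Finally, when $K$ is countable, $\bar\Lambda$ is a countable ring. Theorem \ref{Sup} (Ziegler) then converts undefined width into the existence of a superdecomposable pure-injective $\bar\Lambda$-module.
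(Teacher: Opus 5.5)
Your architecture is the paper's (a non-symmetric pair over $\Lambda$, transferred by Theorem~\ref{NSIDPM1}, then Theorems~\ref{Unw} and~\ref{Sup}). Step~3 is fine, including your remark that the superdecomposable module needs $K$ countable; the paper's proof also needs this, via Theorem~\ref{Sup}. The gap is Step~2, which has no valid input. Theorem~\ref{NSDP} and Proposition~\ref{NDSG} concern one specific algebra: the gentle algebra $KQ/I'$ of Figure~\ref{fig:4}, coming from a particular triangulation of the five-punctured sphere, with the explicit bands $U=\bar a_3\bar a_1\bar a_2^{-1}$ and $V=\bar a_3\bar a_1\bar b_2\bar c_2\bar c_3^{-1}\bar b_3^{-1}\bar a_1\bar a_2^{-1}$. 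They say nothing about an arbitrary non-domestic associated gentle algebra, so ``since $\Lambda$ is non-domestic, Theorem~\ref{NSDP} applies'' is false. The paper instead uses a general source of $Q$-generating pairs, the result of Schr\"oer quoted just before the theorem. A non-domestic string algebra has an arrow $\alpha$ (with partner $\beta$ as there) and two different bands $U,V\in B(\alpha)$ containing no substring $\alpha\beta^{-1}$. Then $(U,V)$ and $(U^{-1},V^{-1})$ are $Q$-generating, and Theorem~\ref{IPDC} gives the independent pair $((M(X),z_1^X)_{X\in L^T_S(U,V)},(M(Y),z_1^Y)_{Y\in L^{T'}_{S'}(U^{-1},V^{-1})})$, whose non-symmetry must then be checked.

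Your proposed check rests on a misidentification in Step~1. The algebra on which $\mathbb{Z}_2$ acts is $\Lambda=KQ^g/\langle I^g\rangle$: ordinary vertices are doubled to $i^{\pm}$, special vertices stay single and $g$-fixed, every arrow is doubled to $\alpha^{\pm}$, and $g$ swaps the signs. It is not the gentle algebra $(Q^{Sp},I^{Sp})$ with special loops attached, which only serves to define skew-gentle triples, so ``avoid the special loops'' has no content. Read as ``avoid the $g$-fixed vertices'' it fails in general. For example, in the Nazarova--Zavadskij example of Section~5 the only non-fixed vertices $\bar v_3,\bar v_3'$ are sinks, so every band meets a fixed vertex. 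Since ${}^gM(X)\cong M({}^gX)$ and $M(X)\cong M(X')$ iff $X'=X^{\pm1}$, what must be shown is ${}^gX_1\neq X_2^{\pm1}$, ${}^gY_1\neq Y_2^{\pm1}$ and ${}^g(Y_1^{-1}X_1)\neq(Y_2^{-1}X_2)^{\pm1}$ for all members of the chains. This includes $X_1=X_2$, which is what makes $F_\lambda(M(X))$ indecomposable by Remark~\ref{indsk}. Every $X$ and every $Y^{-1}X$ begins with $\alpha$ and ends with $\beta^{-1}$, every $Y$ begins with $\beta$ and ends with $\alpha^{-1}$, and $g$ moves every arrow. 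So comparing first letters settles all three conditions unless $g(\alpha)=\beta$. That case occurs when $\alpha,\beta$ are the two copies $\gamma^{\pm}$ of one arrow at a $g$-fixed vertex, as in the Nazarova--Zavadskij example, where ${}^gU=U^{-1}$ and $M(U)$ is itself $g$-stable. There, non-symmetry depends on the choice of $S,T,S',T'$ and not on $U,V$ alone. This case analysis is what your Step~2 needs and does not supply. The paper treats it in one line, phrased for the case where $g$ exchanges $\alpha$ and $\beta$; by the above, that is the delicate case rather than the automatic one.
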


Moreover, super-decomposable modules exist in the particular class of Jacobian algebras.

\begin{theorem}\label{SDJ}
Assume that $S$ is a closed oriented surface with a non-empty finite collection $M$ of punctures, excluding only the case of a sphere with four (or fewer) punctures. For an arbitrary tagged triangulation $T$ in the Jacobian algebra $\Lambda= P(Q(T), W(T))$, there exists an independent pair of dense chains of pointed modules in $\Lambda$-mod. In consequence, there exists a super-decomposable pure-injective $\Lambda$-module, if the base field $K$ is countable.
\end{theorem}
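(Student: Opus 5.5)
The plan is to split according to the shape of the surface, as the introduction indicates, and in each case realize $\Lambda=P(Q(T),W(T))$, or a suitable factor of it, through an algebra where an independent pair of dense chains of pointed modules is already available. Such a pair over a quotient pulls back along the surjection $\Lambda\to\Lambda/I$. Restriction of scalars is fully faithful, and pointed morphisms between $\Lambda/I$-modules are the same as pointed morphisms between them regarded as $\Lambda$-modules. So density of the chains, and independence of the pair (no pointed morphism from one chain to the other), both survive.

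First we reduce to the case where $T$ has no self-folded triangles, or more generally to a convenient triangulation. For a closed surface the Jacobian algebras of different tagged triangulations are related by flips, that is, by QP-mutations. Since the conclusion is required for an arbitrary $T$, I would instead handle an arbitrary $T$ directly: pass to a quotient by an admissible set of arrows or paths, namely those that occur in the cubic terms of $W(T)$ attached to the triangles. This quotient should be a string algebra, or, in the presence of self-folded triangles or of the sphere with five punctures, a skew-gentle algebra.

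\emph{Case 1: $(S,M)$ is not a sphere with at most five punctures.} By Proposition \ref{NDS}, $\Lambda$ has a quotient that is a non-domestic string algebra $B$. Puninski's result \cite{Puni04}, in the form of pointed modules, produces an independent pair of dense chains over $B$. Concretely, I would take two bands $b_1,b_2$ in $B$ that generate a free subsemigroup. Infinite words in $b_1,b_2$ give string modules whose pointed finite-dimensional truncations are ordered densely, in the same way as the rationals in $(0,1)$. Choosing two chains with incompatible starting letters gives independence. We then lift the pair to $\Lambda$ as explained above.

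\emph{Case 2: sphere with five punctures.} Here the introduction says $\Lambda$ maps onto a skew-gentle algebra $\bar\Lambda$ whose associated gentle algebra $\Lambda_0$ is non-domestic (Proposition \ref{NDSG}). Theorem \ref{NSDP} gives a non-symmetric pair of dense chains over $\Lambda_0$. Theorem \ref{NSIDPM1}(1), applied to the Galois semi-covering $F_\lambda$ with $\mathrm{char}(K)\neq2$ and $\theta$ semisimple, transports this to an independent pair over $\bar\Lambda$, and we then pull back to $\Lambda$.

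In both cases Ziegler's criterion, Theorem \ref{Sup}, applies: the independent pair forces the width of the lattice of pointed modules (equivalently of pp-formulae) to be undefined, so for countable $K$ there is a super-decomposable pure-injective $\Lambda$-module. The main obstacle I expect is the uniform identification of the quotient in Case 1 for an \emph{arbitrary} tagged triangulation, including self-folded triangles and punctures of low valency. One has to check that killing the chosen arrows really leaves a string algebra, that this algebra still contains two independent bands (non-domesticity), and that the remaining relations from the cyclic derivatives become monomial or vanish. I would do this locally, triangle by triangle, using the explicit form of $W(T)$ from \cite{La09}. The characteristic hypothesis in Case 2 may need a separate remark.
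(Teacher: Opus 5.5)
Your outline is the paper's own argument. It uses the same case split, Proposition \ref{NDS} with Puninski/Kasjan--Pastuszak in the string case, and Proposition \ref{NDSG} with Theorems \ref{NSDP} and \ref{NSIDPM} for the five-punctured sphere, followed by restriction of scalars and Theorem \ref{Sup}. Your restriction-of-scalars step is better suited than Proposition \ref{QSM}, since the first claim needs the pair itself in $\Lambda$-mod. (Independence in Definition \ref{densepair} concerns the pushouts $M_q\ast N_t$, not the absence of pointed maps between the chains. It survives because a pointed pushout over $\Lambda/I$ is also one over $\Lambda$.)

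The proof still has genuine gaps, which the paper's one-line proof does not close either. First, scope and direction:
\begin{itemize}
\item Propositions \ref{NDS} and \ref{NDSG} each give one particular triangulation $\mathbb{T}$, and \ref{NDS} covers only spheres with at least six punctures. The statement concerns every tagged triangulation of every closed surface, of any genus.
\item As stated, these propositions say the Jacobian algebra \emph{is a quotient of} a string or skew-gentle algebra. That is the wrong direction for restriction of scalars, and you silently reverse it.
\item Having a quotient of a given type is not obviously preserved under flips. You therefore need either the uniform quotient you only sketch (self-folded triangles and small spheres are exactly where it is unclear) or an invariance argument along flips.
\item Characteristic $2$, excluded by Theorem \ref{NSIDPM} but not by the statement, is left open.
\end{itemize}

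More seriously, Case 2 fails as written. The gentle algebra $KQ/I'$ in the proof of Proposition \ref{NDSG} is infinite-dimensional: no two consecutive arrows of the cycle $\bar a_1\bar b_2\bar c_2\bar a_3$ form a relation in $I'$. So neither it nor its skew group algebra is a quotient of the finite-dimensional Jacobian algebra $A$. What $A$ surjects onto is $\bar\Lambda=A/I\cong K\langle Q'\rangle/J$. This is a \emph{proper} quotient of the skew-gentle algebra by extra monomial relations such as $a_1b_2\epsilon_2c_2$ and $c_2a_3a_1b_2$; this is what the introduction means by ``a quotient of a skew-gentle algebra''.

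The band $V$ of Theorem \ref{NSDP} contains the direct substring $\bar a_1\bar b_2\bar c_2$. In the skew group algebra this path corresponds to a nonzero multiple of $a_1b_2\epsilon_2c_2$, and it acts nontrivially on $F_\lambda(M(X))$. Hence these modules are not $\bar\Lambda$-modules, and there is nothing to pull back to $A$.

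A repair would need three steps:
\begin{itemize}
\item lift the extra relations of $J$ to a $g$-stable monomial ideal $\tilde I\supseteq I'$ such that $\bar\Lambda$ is Morita equivalent to $(KQ/\tilde I)\mathbb{Z}_2$;
\item choose new bands in the string algebra $KQ/\tilde I$ that avoid these paths;
\item recheck $Q$-generation and non-symmetry.
\end{itemize}
Finally, the canonical pointed string modules are $\Lambda$-pointed, so the hypothesis that $\theta$ is semisimple in Theorem \ref{NSIDPM} is not met either.
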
 

We also show that the KG dimension of the above-mentioned Jacobian algebra is infinite.

\begin{theorem}
Assume that $S$ is a closed oriented surface with a non-empty finite collection $M$ of punctures, excluding only the case of a sphere with four (or fewer) punctures. For an arbitrary tagged triangulation $T$, in the Jacobian algebra $\Lambda= P(Q(T), W(T))$, the Krull–Gabriel dimension $\mathrm{KG}(\Lambda)$ of $\Lambda$ is infinite.
\end{theorem}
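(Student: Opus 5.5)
The plan is to reduce the claim $\mathrm{KG}(\Lambda)=\infty$ to Theorem \ref{SDJ}. The key point is that the existence of an independent pair of dense chains of pointed modules is a statement about the lattice of pointed finite-dimensional modules. It does not depend on the cardinality of $K$. Ziegler's criterion needs countability only to turn undefined width into a super-decomposable pure-injective module; here we use only the purely lattice-theoretic half, which holds over any field.

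First, by Theorem \ref{SDJ}, for every tagged triangulation $T$ of $(S,M)$ outside the excluded sphere cases, $\Lambda$-mod contains an independent pair $((M_q,\chi_{M_q})_{q\in L_1},(N_t,\chi_{N_t})_{t\in L_2})$ of dense chains of pointed modules. Recall how this is obtained. One presents $\Lambda$, or a suitable quotient of it, as a non-domestic string algebra via Proposition \ref{NDS}, or as a skew-gentle algebra whose associated gentle algebra is non-domestic via Proposition \ref{NDSG}. One then transports the pair along restriction of scalars from the quotient, and in the skew-gentle case along the Galois semi-covering, using Theorem \ref{NSIDPM1}.

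Second, I translate the pair into the lattice $\mathbb{L}$ of pp-formulae in one variable, equivalently of pointed finitely presented modules up to the usual equivalence. Each pointed module $(M,m)$ determines a pp-type, and the morphism order corresponds to implication of pp-formulae. The two dense chains then give two densely ordered chains of pp-formulae. Independence means that the lattice operations generated by them produce a subposet containing a copy of $\mathbb{Q}\times\mathbb{Q}$, or at least a dense "wide" configuration. This is exactly the condition that the width of $\mathbb{L}$ is undefined (infinite).

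Third, I invoke the standard inequality that if the width of $\mathbb{L}$ is undefined then the m-dimension of $\mathbb{L}$ is undefined. This holds because the width is bounded above by the m-dimension, so a dense subquotient kills every finite m-dimension. By Prest's identification, the Krull–Gabriel dimension of $\Lambda$ equals the m-dimension of the lattice of pp-formulae; equivalently, it is the Krull–Gabriel filtration of the category of finitely presented functors, since pp-formulae correspond to subfunctors of forgetful functors. Hence $\mathrm{KG}(\Lambda)=\infty$.

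The main obstacle is the second step: checking carefully that "independent pair of dense chains" as defined in the paper really yields undefined width, or directly a densely ordered subquotient, in $\mathbb{L}$, without any countability hypothesis. If the definition only gives width, I will use the implication "width undefined $\Rightarrow$ m-dim undefined", which is valid for arbitrary modular lattices. It also remains to confirm that the passage through quotients in Theorem \ref{SDJ} preserves this. Restriction of scalars along $\Lambda\to\Lambda/I$ is a full embedding, so it preserves pointed-module morphisms and the order; and $\mathrm{KG}(\Lambda)\geq \mathrm{KG}(\Lambda/I)$. So even just showing $\mathrm{KG}=\infty$ for the string or skew-gentle quotient suffices.
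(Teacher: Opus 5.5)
Your proposal is correct and follows essentially the paper's route: Corollary \ref{JKG} is read off from Theorem \ref{JSD} via Theorem \ref{Unw}, and then the implication ``width undefined $\Rightarrow$ $\mathrm{KG}=\infty$'' of Theorem \ref{Sup}(1). Theorem \ref{Unw} is exactly the step you flag as the main obstacle, so you can simply cite it. Your remark that this implication, unlike the existence of a super-decomposable pure-injective, needs no countability is a useful clarification, because Theorem \ref{Sup} is stated for countable rings and the statement is not. One adjustment: in the skew-gentle case the pointing module is $\tilde{\theta}=F_\lambda(\theta)$ rather than the regular module. So argue with the lattice of finitely presented subfunctors of $\mathrm{Hom}(\tilde{\theta},-)$ (pp-formulae in several variables) rather than pp-formulae in one variable; its m-dimension is still bounded by $\mathrm{KG}$.
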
 

We demonstrate the existence of superdecomposable modules over some well-known algebras like the incidence algebra of the Nazarova–Zavadskij poset, diamond algebra, a garland and some surface algebra as an application of Theorem \ref{NSIDPM1}. We show that superdecomposable modules are preserved when we consider the trivial (or relation) extension algebra:

\begin{theorem}\label{TSDM}
Suppose $\Lambda= \mathcal{K}Q_\Lambda/I_\Lambda$ be a finite-dimensional algebra and let $T(\Lambda)= \mathcal{K}Q_{T(\Lambda)}/I_{T(\Lambda)}$ be its trivial extension algebra. Then there exists a superdecomposable module in $T(\Lambda)$ if there exists a superdecomposable module in $\Lambda$.
\end{theorem}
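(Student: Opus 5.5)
The plan is to pass through the quotient map $\pi: T(\Lambda) \to \Lambda$. The trivial extension is $T(\Lambda)=\Lambda\ltimes D\Lambda$, where $D\Lambda=\mathrm{Hom}_{\mathcal K}(\Lambda,\mathcal K)$ is a two-sided ideal with square zero. The canonical surjection $\pi$ has kernel $D\Lambda$. Restriction of scalars along $\pi$ embeds $\mathrm{Mod}\mbox{-}\Lambda$ into $\mathrm{Mod}\mbox{-}T(\Lambda)$ as a full subcategory. This subcategory consists of the $T(\Lambda)$-modules annihilated by $D\Lambda$, and it is closed under submodules, quotients, arbitrary products, direct sums and direct limits. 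So I would take a superdecomposable pure-injective $\Lambda$-module $M$ and show that $\pi^*M$ is a superdecomposable pure-injective $T(\Lambda)$-module. Here I read ``superdecomposable module'' in the statement as in Theorem \ref{SDSG}, namely as superdecomposable pure-injective.

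\emph{Step 1: pure-injectivity is preserved.} Since $\pi$ is a surjective ring map, every pp-formula over $\Lambda$ lifts to a pp-formula over $T(\Lambda)$. Conversely, a $T(\Lambda)$-pp-formula evaluated on a module killed by $D\Lambda$ is equivalent to the $\Lambda$-formula obtained by applying $\pi$ to its coefficients. Hence pp-definable subgroups of $\pi^*M$ over $T(\Lambda)$ coincide with those of $M$ over $\Lambda$, and $\pi^*$ preserves and reflects pure embeddings. Algebraic compactness, i.e.\ the solvability of finitely solvable systems of pp-conditions, is therefore the same over both rings. Alternatively, $M$ is a direct summand of $\Lambda$-$\mathrm{Hom}$-dual modules $D(N)$, and $D(\pi^*N)=\pi^* D(N)$. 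Either way, $\pi^*M$ is pure-injective.

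\emph{Step 2: superdecomposability is preserved.} Suppose $\pi^*M$ had an indecomposable direct summand $U$ over $T(\Lambda)$. Then $U$ is a submodule of $\pi^*M$, so $U\cdot D\Lambda=0$ and $U=\pi^*U'$ for some $\Lambda$-module $U'$. The complement is handled the same way, and $T(\Lambda)$-endomorphisms of modules killed by $D\Lambda$ are exactly the $\Lambda$-endomorphisms, by fullness. It follows that the decomposition $\pi^*M=U\oplus V$ comes from a $\Lambda$-decomposition $M=U'\oplus V'$ with $U'$ indecomposable. This contradicts the superdecomposability of $M$. Also $M\neq 0$ gives $\pi^*M\neq0$.

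\emph{Alternative route.} If the paper's preferred formulation is via Ziegler's criterion (Theorem \ref{Sup}), I would run the same argument at the level of pointed modules. An independent pair of dense chains of pointed $\Lambda$-modules is transported by $\pi^*$. The pointed-module morphism preorder is unchanged, because $\mathrm{Hom}_{T(\Lambda)}(\pi^*X,\pi^*Y)=\mathrm{Hom}_\Lambda(X,Y)$, and pushouts and pullbacks (sums and meets in the lattice of pointed modules) stay inside the image of $\pi^*$ since it is closed under finite limits and colimits. So density and independence are preserved, and the width of the lattice is again undefined. The main obstacle I expect is bookkeeping: making sure that the lattice operations, or the pp-lattice, computed in $T(\Lambda)$ agree with those computed in $\Lambda$. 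The key justification is that $\pi$ is surjective, so the pp-lattice of $T(\Lambda)$ restricted to $\Lambda$-modules is the pp-lattice of $\Lambda$, and the argument is routine.
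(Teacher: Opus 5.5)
Your proposal is correct and follows the paper's own argument: view a $\Lambda$-module as a $T(\Lambda)$-module through the surjection $T(\Lambda)\to\Lambda$, with $D\Lambda$ acting as zero, and note that direct-sum decompositions are the same over both rings. Your version is more careful than the paper's: you prove the direction that is actually needed (every $T(\Lambda)$-summand is annihilated by $D\Lambda$ and is therefore a $\Lambda$-summand), and you also check that pure-injectivity is preserved, which the paper does not address.
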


We produce a counter-example (Example \ref{BGE}) to show that the converse of the above theorem is not true in general. The (skew-) Brauer graph algebras (with multiplicity identically one) are obtained as the trivial extension algebras of (skew-) gentle algebras (resp. \cite[Theorem~3.11]{So24} and \cite[Corollary~3.14]{Schrl18}). Brauer graph algebras are symmetric special biserial \cite{Schr99}, and the (skew-) Brauer graph algebras are obtained as the skew group algebras of Brauer graph algebras \cite[Proposition~3.9]{So24}. We end the paper with a discussion on the existence of super-decomposable modules over (skew-) Brauer graph algebras (Remark \ref{SDBGA}). 

We arrange the paper as follows. We start with the basic information on pointed modules over an algebra in Section $(2)$. Theorem \ref{Unw} states that the existence of an independent pair of dense pointed modules ensures the existence of a wide lattice of such modules, which essentially leads to the existence of a superdecomposable module over the algebra (Theorem \ref{Sup}), thanks to Ziegler. Moreover, we study skew group algebras and present some results about the Galois semi-covering functor over the module category from \cite{GoSaTr25}. We show that this functor is indeed exact and faithful by Remark \ref{faex}. We end this section with Proposition \ref{pm}, which states that the semi-covering functor respects the pointed morphisms between pointed modules. In Section $(3)$, we formulate some results from \cite{Puni04} regarding the existence criteria of an independent pair of dense chains of pointed modules over any non-domestic string algebra. We show that if there exists a non-symmetric independent pair of dense chains of pointed modules over an algebra, then we obtain an independent pair of dense chains of pointed modules over its skew group algebra under the Galois semi-covering functor (Theorem \ref{NSIDPM}). Moreover, we find such a non-symmetric pair in any non-domestic gentle algebra associated with a skew-gentle algebra, which produces a superdecomposable module over the skew-gentle algebra, resulting in their KG-dimension being undefined (Theorem \ref{SDSG}). In Section $(4)$, we study the Jacobian algebras associated with closed oriented surfaces with a non-empty finite collection of punctures, excluding only the case of a sphere with four (or fewer) punctures. The algebras associated with a sphere with five punctures are associated with a quotient of a skew-gentle algebra, and we produce an independent dense pair of pointed modules over the associated gentle algebra (Theorem \ref{NSDP}), which is non-domestic (Proposition \ref{NDSG}), whereas in the rest of the cases these Jacobian algebras are presented as the quotient of non-domestic string algebras (Proposition \ref{NDS}). We end the section with the statement that the Jacobian algebras have a superdecomposable module and their KG-dimension is undefined (Theorem \ref{JSD} and Corollary \ref{JKG}). In Section $(5)$, we explicitly show the existence of superdecomposable modules over some well-known algebras like the incidence algebra of the Nazarova–Zavadskij poset, diamond algebra, a garland and some surface algebra as an application of Theorem \ref{NSIDPM}. Finally, in Section $(6)$, we show that a superdecomposable module over an algebra is preserved when we consider its trivial (or relation) extension algebra (Theorem \ref{TSDM}), but the converse is not true in general. We produce a counter-example (Example \ref{BGE}) of a Brauer graph algebra, as it is obtained as a trivial extension algebra over a gentle algebra. Moreover, we comment on the existence of such a module over non-domestic (skew-) Brauer graph algebra in general.

\subsection*{Acknowledgements:}
The author had a stay at the CEMIM-Universidad Nacional de Mar del Plata, supported by a postdoctoral scholarship from CONICET.

\section{Preliminaries}
In this section, we discuss the lattice of pointed modules and the criteria for the existence of super-decomposable modules over an algebra. We also present some basic information about the skew group algebras and the Galois semi-covering functor over a module category (follow \cite{KaPa14} for details), and their interactions with the pointed modules.

\subsection{The lattice of pointed modules and super-decomposable modules}
Here we present a sufficient condition for the existence of a super-decomposable pure-injective module in terms of independent pairs of dense chains of pointed modules. Let $\theta$ be a finitely presented $\Lambda$-module. A $\theta$-pointed $\Lambda$-module (or, simply a pointed $\Lambda$-module) represents a pair $(M; \chi_ M)$ where $M$ is a finitely presented module and $\chi_M: \theta\to M$ is a module homomorphism. If $\theta= \Lambda$, then $\chi_M$ is uniquely determined by $\chi_M(1)$ of $M$. Here, identify $(M; \chi_M)$ with $(M; \chi_M(1))$ and call it a pointed module. A homomorphism $f: M \to N$ is a pointed homomorphism from $(M; m)$ to $(N; n)$ if and only if $f(m)= n$ and call $f: (M; m) \to (N; n)$ a pointed homomorphism. 

Assume that $(M; \chi_M)$ and $(N; \chi_N)$ are two $\theta$-pointed modules. A $\theta$-pointed homomorphism $f: (M; \chi_M) \to (N; \chi_N)$ stand for a homomorphism $f: M \to N$ that satisfy $f \chi_M = \chi_N$. If $f: M \to N$ is an isomorphism, call $f: (M; \chi_M) \to (N; \chi_N)$ a $\theta$-pointed isomorphism. Let $P_\Lambda^\theta$ be the set of all $\theta$-isomorphism classes of $\theta$-pointed $\Lambda$-modules. Let $\equiv$ be the equivalence relation on $P_\Lambda^\theta$ defined by $(M; \chi_M) \equiv (N; \chi_N)$ if and only if there exist pointed homomorphisms $f:(M; \chi_M) \to (N; \chi_N)$ and $g:(N; \chi_N) \to (M; \chi_M)$. The quotient set $\mathcal{P}_\Lambda^\theta:= P_\Lambda^\theta/\equiv$ is a poset with respect to the relation $\leq$ defined by $\overline{(M; \chi_M)} \leq \overline{(N; \chi_N)}$ if and only if there exists a pointed homomorphism $f: \overline{(N; \chi_N)} \to \overline{(M; \chi_M)}$. Denote by $\overline{(S; \chi_S)}$ the $\equiv$-class of a $\theta$-pointed $\Lambda$-module $(S; \chi_S)$. The poset $\mathcal{P}_\Lambda^\theta$ is a modular lattice [Pr1] with respect to the operations $\oplus$ and $\ast$ defined below.

Given two pointed modules $(M; \chi_M), (N; \chi_N)$, a pointed module $(M \oplus N; \chi_{M\oplus N})$ where $\chi_{M\oplus N}(l)= (\chi_M(l); \chi_N(l))$ for any $l\in \theta$, is defined as their pointed direct sum. Set $(M; \chi_M)\oplus (N; \chi_N)=(M \oplus N; \chi_{M\oplus N})$. Let $M\ast N$ is the pushout of $\chi_M$ and $\chi_N$, i.e, $M \ast N= M \oplus N/ \{(\chi_M(l); -\chi_N(l)): l\in \theta\}$, and $\epsilon_M: M \to M \ast N$ and $\epsilon_N: N \to M \ast N$ be the homomorphisms given by $\epsilon_M (m)= \overline{(m; 0)}$ and $\epsilon_N(n)= \overline{(0; n)}$ for any $m\in M$ and $n\in N$. The pointed module $(M\ast N; \chi_{M\ast N})$ where $\chi_{M\ast N}= \epsilon_M \chi_M = \epsilon_N\chi_N$ is the pointed pushout of $(M; \chi_M)$ and $(N; \chi_N)$. Set $(M; \chi_M) \ast (N; \chi_N)= (M \ast N; \chi_{M\ast N})$. Here
$$\mathrm{sup}\{\overline{(M; \chi_M)}, \overline{(N; \chi_N)}\}=\overline{(M\oplus N; \chi_{M\oplus N})};$$
$$\mathrm{inf}\{\overline{(M; \chi_M)}, \overline{(N; \chi_N)}\}=\overline{(M\ast N; \chi_{M\ast N})}.$$
If $\theta = \Lambda^t$, then $\mathcal{P}_\Lambda^\theta$ is equivalent to the lattice of all pp-formulae with $t$ free variables [Pr1]. We say that a lattice $\mathcal{L}\subseteq \mathcal{P}_\Lambda^\theta$ of $\theta$-pointed $\Lambda$-modules is wide iff for any $\overline{(M_p, \chi_{M_p})} < \overline{(M_q, \chi_{M_q})} \in \mathcal{L}$ there are incomparable elements $\overline{(M, \chi_M)}, \overline{(N, \chi_N)}$
of $\mathcal{L}$ such that
$$\overline{(M_p, \chi_{M_p})} < \overline{(M, \chi_M)}, \overline{(N, \chi_N)} < \overline{(M_q, \chi_{M_q})},$$
$$\overline{(M_p, \chi_{M_p})} < \overline{(M\ast N, \chi_{M\ast N})}< \overline{(M\oplus N, \chi_{M\oplus N})} < \overline{(M_q, \chi_{M_q})},$$

In case $\mathcal{P}_\Lambda^\theta$ contains a wide sublattice $\mathcal{L}$, we say that the width of $\mathcal{P}_\Lambda^\theta$ is undefined. Given a set $C$, a family $\{(M_q, \chi_{M_q}); q\in C\}$ of $\theta$-pointed $\Lambda$-modules is denoted by $(M_q, \chi_{M_q})_{q\in C}$. Let $\mathcal{Q}$ be the set of rational numbers viewed as a poset with respect to the natural ordering $\leq$. Recall that a poset $P$ is a $\mathcal{Q}$-chain if and only if it is a dense chain without end points. Any $\mathcal{Q}$-chain is isomorphic as a poset with the set $\mathcal{Q}$.

\begin{definition}\label{dense}
Given a $\mathcal{Q}$-chain $C$, a dense chain of $\theta$-pointed $\Lambda$-modules is a family $(M_q; \chi_{M_q})_{q\in C}$ of $\theta$-pointed $\Lambda$-modules such that:   
\begin{enumerate}
\item the endomorphism ring $End_\Lambda(M_q)$ is local and $\chi_{M_q}\neq 0$ for any $q\in C$;

\item $q < q' \in C$ ensures the existence of homomorphisms $\mu_{q;q}:(M_q; \chi_{M_q})\to (M_{q'}; \chi_{M_q'})$;

\item $q\neq q' \in C$ implies that $(M_q; \chi_{M_q})$ and $(M_{q'}; \chi_{M_{q'}})$ are not isomorphic.
\end{enumerate}
\end{definition}

\begin{definition}\label{densepair}
An independent pair of dense chains of $\theta$-pointed $\Lambda$ modules is a pair $((M_q; \chi_{M_q})_{q\in C_1}; (N_t; \chi_{N_t})_{t\in C_2})$ of dense chains of $\theta$-pointed
$\Lambda$-modules such that:
\begin{enumerate}
\item the endomorphism ring $End_\Lambda(M_q \ast N_t)$ is local for any $q \in C_1, t\in C_2$;

\item the pointed module $(M_q; \chi_{M_q}) \ast (N_t; \chi_{N_t})$ is not $\theta$-isomorphic to $(M_{q'}; \chi_{M_q'}) \ast (N_t; \chi_{N_t})$ or to $(M_q; \chi_{M_q}) \ast (N_{t'}; \chi_{N_t'})$ for any $q\neq q' \in C_1$ and $t \neq t' \in C_2$.
\end{enumerate}
\end{definition}

Independent pairs of dense chains of pointed modules generate wide lattices as follows:

\begin{theorem}\label{Unw}
Assume that the pair $((M_q; \chi_{M_q})_{q\in C_1}; (N_t; \chi_{N_t})_{t\in C_2})$ is an independent pair of dense chains of $\theta$-pointed $\Lambda$-modules. Then the lattice $$Gen(\overline{(M_q, \chi_{M_q})_{q\in C_1}} \cup \overline{(N_t, \chi_{N_t})_{t\in C_2}}),$$
which is the smallest sublattice of $\mathcal{P}_\Lambda^\theta$ containing sets $(M_q; \chi_{M_q})_{q\in C_1}$ and $(N_t; \chi_{N_t})_{t\in C_2})$, is a wide lattice. Therefore, the width of the lattice $\mathcal{P}_\Lambda^\theta$ is undefined.
\end{theorem}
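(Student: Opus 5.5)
We follow Puninski's argument for string algebras \cite{Puni04}, which uses only the formal lattice-theoretic properties of $\mathcal{P}_\Lambda^\theta$ and the two axioms of Definition \ref{densepair}. Write $\mathcal{L}=Gen(\overline{(M_q, \chi_{M_q})_{q\in C_1}} \cup \overline{(N_t, \chi_{N_t})_{t\in C_2}})$.

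\textbf{Step 1: reduce to a basic configuration.} First we show that every element of $\mathcal{L}$ is a finite lattice combination, via $\oplus$ and $\ast$, of the generators. Then, given $\overline{(M_p,\chi_{M_p})}<\overline{(M_q,\chi_{M_q})}$ in $\mathcal{L}$, we use modularity and density of $C_1$ and $C_2$ to find the following: parameters $q_1<q_2$ in $C_1$ and $t_1<t_2$ in $C_2$, and a subinterval of $[\overline{(M_p,\chi_{M_p})},\overline{(M_q,\chi_{M_q})}]$ that is isomorphic, via the maps $x\mapsto x\ast y$ and $x\mapsto x\oplus y$ of the modular law, to an interval spanned by generators. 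Concretely, we aim to locate the two intervals $[\overline{M_{q_1}},\overline{M_{q_2}}]$ and $[\overline{N_{t_1}},\overline{N_{t_2}}]$ inside (a translate of) the given interval. This is possible because a strict inequality between two lattice words forces some generator occurring in them to separate them, and by density we can shrink around that generator.

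\textbf{Step 2: produce the incomparable pair.} Choose $q_1<q<q_2$ and $t_1<t<t_2$, and take $M=M_q$ and $N=N_t$, translated into the interval. Note that by condition (2) of Definition \ref{dense} the order is reversed: $\mu_{q;q'}$ gives $\overline{M_{q'}}\le\overline{M_q}$. The following must be checked.
\begin{enumerate}
\item $M$ and $N$ are incomparable, and $M\ast N<M,N<M\oplus N$ strictly.
\item The inequalities are strict relative to the endpoints.
\end{enumerate}
Strictness $\overline{M_{q'}}<\overline{M_q}$ follows from Definition \ref{dense}(1),(3). Suppose there were pointed maps in both directions. Their composite is a pointed endomorphism of $M_q$ fixing $\chi_{M_q}\ne 0$. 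Since $End(M_q)$ is local, this composite is not in the radical, since otherwise it would be nilpotent on $\chi_{M_q}$, which is impossible because it fixes $\chi_{M_q}\neq 0$. Hence it is an automorphism, so $M_q\cong M_{q'}$ as pointed modules, contradicting (3). The same argument, using Definition \ref{densepair}(1),(2), gives strictness of $M_q\ast N_t$ versus $M_{q'}\ast N_t$ and versus $M_q\ast N_{t'}$. This yields the strict chain involving $M\ast N$.

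\textbf{Step 3: incomparability of $M_q$ and $N_t$.} Suppose $\overline{M_q}\le\overline{N_t}$. Then $M_q\ast N_t\equiv M_q$, and likewise $M_{q'}\ast N_t\equiv M_{q'}$ for nearby $q'$. By density this will force $M_{q'}\ast N_t\equiv M_q\ast N_t$ for some $q'\ne q$, because the comparability with $N_t$ propagates along the chain. The locality argument of Step 2 then upgrades $\equiv$ to a pointed isomorphism, contradicting Definition \ref{densepair}(2).

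The main obstacle is Step 1, the reduction of an arbitrary strict pair of elements of the generated lattice to a configuration of generators. Our first attempt is induction on the length of lattice words, using the modular law to transport intervals. Once $\mathcal{L}$ is shown to be wide, the width of $\mathcal{P}_\Lambda^\theta$ is undefined by definition.
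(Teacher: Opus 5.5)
\textbf{The paper gives no proof of Theorem \ref{Unw}.} It quotes the result from the literature on pointed modules (Puninski, Kasjan--Pastuszak), so your argument has to stand on its own. As written it has a genuine gap.

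\textbf{Step 1 is the whole theorem and is only asserted.} The heuristic that ``a strict inequality between two lattice words forces some generator to separate them'' is false in a general modular lattice. Without a substitute you cannot show that your translated $c,d$ satisfy $x<c\ast d$ and $c\oplus d<y$ for an \emph{arbitrary} pair $x<y$ in $\mathcal{L}$. Nothing you use rules out a collapse such as $\overline{M_q\ast N_t}\le x$ for a long word $x$ none of whose joinands lies above $\overline{M_q\ast N_t}$. What is missing is a structure theorem for $\mathcal{L}$; below, $\vee=\oplus$ and $\wedge=\ast$ on classes.
\begin{enumerate}
\item By Birkhoff's theorem, a sublattice of a modular lattice generated by two chains is distributive. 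So every element of $\mathcal{L}$ is a finite join of classes $\overline{M_q}$, $\overline{N_t}$, $\overline{M_q\ast N_t}$.
\item Locality makes each such class join-irreducible in $\mathcal{P}^\theta_\Lambda$. Let $(f_1,f_2)\colon X\to Y\oplus Z$ and $(g_1,g_2)\colon Y\oplus Z\to X$ be pointed. Then $g_1f_1+g_2f_2$ fixes the nonzero point of $X$, so it is a unit of the local ring $\mathrm{End}(X)$. Hence some $g_if_i$ is a unit, and $(g_if_i)^{-1}g_i$ is a pointed map back. In the distributive lattice $\mathcal{L}$, join-irreducibles are join-prime.
\item By Definition \ref{densepair}(2) and the unit argument, $\overline{M_q\ast N_t}\le\overline{M_{q'}\ast N_{t'}}$ holds iff $q\ge q'$ and $t\ge t'$. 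Indeed, if $q<q'$ one gets $\overline{M_q\ast N_t}\le\overline{M_{q'}}\wedge\overline{N_t}=\overline{M_{q'}\ast N_t}\le\overline{M_q\ast N_t}$. Analogous statements hold for $\overline{M_q}$ and $\overline{N_t}$.
\end{enumerate}
With these, comparisons in $\mathcal{L}$ reduce to the componentwise order on index pairs, and wideness is elementary. Given $x<y$, pick a join-irreducible $j\le y$ with $j\not\le x$. Using density and the absence of endpoints, pick incomparable join-irreducibles $p_1,p_2<j$ with $p_1\wedge p_2\not\le x$. Set $c=x\vee p_1$ and $d=x\vee p_2$; distributivity and join-primeness give every required strict inequality.

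The unit argument for $M_q\ast N_t$, which you also use in Step 2, needs $\chi_{M_q\ast N_t}\neq 0$. This is not an axiom, but it can be derived. Otherwise $M_q\ast N_t\cong M_q/\mathrm{im}\,\chi_{M_q}\oplus N_t/\mathrm{im}\,\chi_{N_t}$, and locality together with (2) gives a contradiction.

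\textbf{Step 3 is wrong as written.} If $\overline{M_q}\le\overline{N_t}$, then for $q'>q$ you get $M_{q'}\ast N_t\equiv M_{q'}$. These are pairwise inequivalent by Step 2, so varying $q$ gives no contradiction. You must vary the other index. For $t'<t$ we have $\overline{M_q}\le\overline{N_t}\le\overline{N_{t'}}$, so $M_q\ast N_{t'}\equiv M_q\equiv M_q\ast N_t$. The unit argument upgrades this to a pointed isomorphism, contradicting Definition \ref{densepair}(2). The case $\overline{N_t}\le\overline{M_q}$ is symmetric, varying $q$ downward. Only the absence of endpoints is needed here, not density.
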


It is not known whether the existence of a wide sublattice of $\mathcal{P}_\Lambda^\theta$ (or, equivalently, a wide sublattice of the lattice of all pp-formulas over $\Lambda$) implies the existence of an independent pair of dense chains of pointed modules. The assertion $(1)$ of the following theorem is due to Ziegler in the case of independent pairs of $\mathcal{Q}$-chains in $\mathcal{P}_\Lambda^\theta$.

\begin{theorem}\label{Sup}
Let $\theta$ be a finitely presented module in a countable unital ring $\Lambda$.
\begin{enumerate}
\item If the lattice $\mathcal{P}_\Lambda^\theta$ of $\theta$-pointed $\Lambda$-modules has width undefined, then there exists a super-decomposable pure-injective $\Lambda$-module and $\mathrm{KG}(\Lambda)$ is infinite in this case.
\item If there exists an independent pair $(L; K)$ of $\mathcal{Q}$-chains in $\mathcal{P}_\Lambda^\theta$, then there exists a super-decomposable pure-injective $\Lambda$-module.
\end{enumerate}
\end{theorem}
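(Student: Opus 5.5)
Theorem \ref{Sup} is essentially Ziegler's criterion specialised to pointed modules, and I would prove it by reduction to the pp-formula lattice.

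First I would translate $\mathcal{P}_\Lambda^\theta$ into the lattice of pp-formulae. Since $\theta$ is finitely presented, pick generators $l_1,\dots,l_t$ of $\theta$. A $\theta$-pointed module $(M;\chi_M)$ then determines the $t$-tuple $(\chi_M(l_1),\dots,\chi_M(l_t))\in M^t$. By the standard Prest correspondence, this tuple determines a pp-type, generated by a single pp-formula $\varphi_{(M,\chi_M)}(x_1,\dots,x_t)$ because $M$ is finitely presented. Its conjunction with the pp-formula expressing the relations of $\theta$ is what we actually use. Pointed homomorphisms $(N;\chi_N)\to(M;\chi_M)$ exist exactly when $\varphi_{(M,\chi_M)}\to\varphi_{(N,\chi_N)}$. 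Direct sums correspond to conjunction and pushouts to sum. Hence $\mathcal{P}_\Lambda^\theta$ is anti-isomorphic to the interval of the lattice of pp-formulae in $t$ variables above the formula defining $\theta$. For $\theta=\Lambda^t$ this is the remark already cited from [Pr1]. A wide sublattice therefore yields, reversing the order, a wide sublattice of the pp-lattice, since the definition of wide is self-dual. The width of the pp-lattice is then undefined.

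For (1), invoke Ziegler \cite{Zieg84}. Over a countable ring there are only countably many pp-formulae, so the Ziegler spectrum has a countable basis. Ziegler's theorem then says that undefined width of the pp-lattice is equivalent to the existence of a super-decomposable pure-injective module. The construction builds a type, indexed by a dense tree of formula-intervals, whose hull has no indecomposable summand. For $\mathrm{KG}$, I would use that width undefined forces the $m$-dimension (Krull dimension of the pp-lattice) to be undefined: a wide lattice contains a copy of $\mathcal{Q}$ as a subchain, hence a densely ordered chain, and a lattice with a densely ordered subchain has no Krull dimension. By Prest's result, $m$-dimension of the pp-lattice equals $\mathrm{KG}(\Lambda)$, so $\mathrm{KG}(\Lambda)=\infty$.

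For (2), an independent pair $(L;K)$ of $\mathcal{Q}$-chains is exactly Ziegler's original hypothesis. Alternatively, it generates a wide sublattice by the argument of Theorem \ref{Unw}: between $p<q$ one takes elements of $L$ and $K$ in the interval and uses independence to get strict inequalities for their meet and join. We then reduce to (1). The main obstacle is the first step, namely checking carefully that pushout and direct sum on $\theta$-pointed modules correspond to sum and conjunction of pp-formulae in the required interval, and that $\equiv$ matches logical equivalence. I would verify this via the free realisation of pp-formulae together with the finite presentation of $\theta$. The remaining input is citation of Ziegler and Prest.
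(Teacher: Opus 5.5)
Your proposal follows essentially the same route as the paper, which gives no proof of this statement and simply attributes it to Ziegler via Prest's identification of $\theta$-pointed modules with pp-formulae; this is precisely your reduction, followed by Ziegler's criterion and the equality of $\mathrm{KG}(\Lambda)$ with the $m$-dimension of the pp-lattice. Two small corrections, neither of which affects the argument. First, with your (correct) description of pointed homomorphisms, the pointed direct sum corresponds to the sum $\varphi_M+\varphi_N$ and the pointed pushout to the conjunction $\varphi_M\wedge\varphi_N$, so under the paper's ordering $\mathcal{P}_\Lambda^\theta$ is order-isomorphic (not anti-isomorphic) to the interval of pp-formulae below the formula expressing the defining relations of $\theta$; this is harmless, as you note, since wideness is self-dual. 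Second, your alternative sketch for (2) is too quick, because an interval $p<q$ in the lattice generated by $L\cup K$ need not contain any element of $L$ or $K$ (one has to use elements such as $p\vee(l\wedge k)$ in the distributive lattice generated by two chains), but the direct appeal to Ziegler suffices.
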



\subsection{Galois semi-covering functor over module categories:}
Here, we study skew-group algebra and present a Galois semi-covering between the module categories of an algebra and skew-group algebra. Moreover, we show that such a functor is exact and faithful, which will preserve the pushout of the pointed modules.

\noindent{\textbf{Skew group algebras:}} Skew group algebras were first studied in view of representation theory in \cite{ReRi85}. For a finite-dimensional algebra $\Lambda$ over a field $K$ and a finite group $G$ acting on $\Lambda$ by automorphisms, the skew group algebra $\bar\Lambda$ shares many representation-theoretic properties with $\Lambda$ often incarnated in properties of functors between $\mathrm{mod}\mbox{-}\Lambda$ and $\mathrm{mod}\mbox{-}\bar\Lambda$. Let $G$ be a finite group acting on an algebra $\Lambda$ by automorphisms. The skew group algebra $\bar\Lambda$ is the algebra defined by:
\begin{itemize}
    \item its underlying vector space is $\Lambda \otimes_ K KG$;
    \item multiplication is given by $(\lambda \otimes g)(\mu \otimes h) = \bar\Lambda(\mu) \otimes gh$ for $\lambda,\mu\in\Lambda$ and $g,h\in G$, extended by linearity and distributivity.
\end{itemize}
Notice that the algebra $\bar\Lambda$ is not basic in general.

In the PhD thesis \cite{Jose83}, J. A. de la Pe\~na observed that the concept of skewness is identical with the Galois covering in the case of a free group action. This motivated us to study skew group algebra construction when the group action is not necessarily free and introduce the notion of the Galois semi-covering functor. Let us first recall the Galois semi-covering $F_\lambda: \mathrm{mod}\mbox{-}\Lambda \to  \mathrm{mod}\mbox{-}\bar\Lambda$ from \cite{GoSaTr25}.
\begin{definition}
Let $A,B$ be linear categories with $G$ a group acting on $A$. A functor $F: A\to B$ is called a $G$-semi covering if for any $X,Y\in Ob(A)$, the following hold:
$$B(FX,FY)\approx \begin{cases}\bigoplus_{g\in G} A(gX,Y)&\mbox{if } G_{X}\neq G;\\\bigoplus_{g\in G} A(X,gY)&\mbox{if }G_{Y}\neq G;\\A^{|G|}(X,Y)&\mbox{ if }G_{XY}= G.\end{cases}$$    
\end{definition}

\begin{assumption}
Let us fix a finite abelian group $G$ of order $n$ acting on $KQ$ such that $G_{i_0}\neq G$ implies $|O_{i_0}|=n$ for each $i_0\in \tilde I$. Recall that the set of all irreducible representations of $G$, denoted $\hat{G}$, forms a group w.r.t. tensor product of representations with $|\hat{G}|=n$.
\end{assumption}

The vertices of $Q_G$, the underlying quiver of the skew group algebra, are given by
$Q_{G_0}=\{(i_0, \rho)\mid i_0 \in \tilde I, \rho \in \hat{G}_{i_0}\}.$ The idempotent of $(KQ)G$ corresponding to the vertex $(i_0, \rho)$ is $e_{i_0\rho} = i_0\otimes e_\rho \mbox{, where } e_\rho = \frac{1}{|G_{i_0}|} \sum_{g\in G_{i_0}} \rho(g)g$ is an idempotent of $KG_{i_0}$. Consider the idempotent of $KQ_G$,
$\bar e = \sum_{i_0\in \tilde I} \bar {e_{i_0}} \mbox{ where } \bar {e_{i_0}} = \sum_{\rho \in \hat{G}_{i_0}}e_{i_0\rho}.$ We recall the semi-covering functor \cite[Proposition~2.12]{GoSaTr25} induced by $F: KQ \to (KQ)G$ given by setting for each $i \in Q_0$, $F(i)=\bar {e_{i_0}} \mbox{ where, } i_0\in O_i\cap \tilde I$ and its push-down functor, $F_\lambda: \mathrm{mod}\mbox{-}\Lambda \to \mathrm{mod}\mbox{-}\bar\Lambda$. 

\begin{definition}\label{dgsc}
Let $F: \Lambda \to \bar\Lambda$ be the Galois semi-covering functor. We define the pushdown functor $F_\lambda: \mathrm{mod}\mbox{-}\Lambda \to \mathrm{mod}\mbox{-}\bar\Lambda$ as follows:

Suppose $M \in \mathrm{mod}\mbox{-}\Lambda$, then $F_\lambda M= \bigoplus_{i_0\in \tilde I} F_\lambda M(\bar {e_{i_0}})$ where, for each $\bar {e_{i_0}}\in \bar\Lambda$, we set
$$F_\lambda M(\bar {e_{i_0}}):= \bigoplus_{F(x)=\bar {e_{i_0}}}M(x).$$

Assume that $\bar{\alpha}\in \Lambda_1G(\bar {e_{i_0}},\bar {e_{j_0}})$. Consider the following cases:
\begin{enumerate}
\item If $G_{i_0}\neq G$ then there is an arrow $\alpha_h: h{i_0}\to {j_0}$ for some $h\in G$ such that $\bar{\alpha}=F(\alpha_h)$. Then the homomorphism $F_\lambda M(\bar{\alpha}): F_\lambda M(e_{i_0}) \to F_\lambda M(\bar {e_{j_0}})$ is defined by: 
$$(\mu_g) \mapsto (\sum_{g\in G} M(g\alpha_h)(\mu_g)).$$

\item If $G_{i_0}=G$ but $G_{j_0}\neq G$ then there is an arrow $\alpha_h: i_0\to hj_0$ for some $h\in G$ such that $\bar{\alpha}=F(\alpha_h)$. The homomorphism $F_\lambda M(\bar{\alpha}): F_\lambda M(\bar {e_{i_0}}) \to F_\lambda M(e_{j_0})$ is defined by:
$$\mu \mapsto (M(g_1\alpha_h)(\mu),\hdots, M(g_n\alpha_h)(\mu)).$$

\item If $G_{i_0j_0}= G$ then there is an arrow $\alpha: i_0\to j_0$ such that $\bar{\alpha}=F(\alpha)$. The homomorphism $F_\lambda M(\bar{\alpha}): F_\lambda M(\bar {e_{i_0}}) \to F_\lambda M(\bar {e_{j_0}})$ is defined by homomorphism:
$$\mu \mapsto M(\alpha)(\mu).$$
\end{enumerate}
\end{definition}


 

\vspace{0.1in}
\textbf{Morphism in $\mathrm{Mod}\mbox{-}\bar\Lambda$:} 
Assume that $f:=(f_{i_0})_{i_0\in Q_0}: M\to N \in \mathrm{mod}\mbox{-}\Lambda$ is a module homomorphism where, $f_{i_0}: M(i_0) \to N(i_0)$. Then $F_\lambda(f):= (\hat{f}_{\bar{e_{i_0}}}): F_\lambda(M) \to F_\lambda(N)$ where, $\hat{f}_{\bar{e_{i_0}}}: F_\lambda(M)(\bar{e_{i_0}}) \to F_\lambda(N)(\bar{e_{i_0}})$ is defined by homomorphisms $f_{i}: M(i) \to N(i)$, for all $i\in \mathcal{O}(i_0)$. If $M\in \mathrm{mod}\mbox{-}\Lambda$, then $F_\lambda(M)\in \mathrm{mod}\mbox{-}\bar\Lambda$. Hence, the functor $F_\lambda$ restricts to a functor $\mathrm{mod}\mbox{-}\Lambda \to \mathrm{mod}\mbox{-}\bar\Lambda$, also denoted by $F_\lambda$. 

\vspace{0.1in}
\textbf{Group action over $\mathrm{Mod}\mbox{-}\Lambda$:} Given a $M\in \mathrm{mod}\mbox{-}\Lambda$ we denote by ${}^gM$ the module $M\circ g^{-1}$ and a module homomorphism $f: M \to N$ we denote by ${}^gf$ the $\Lambda$-module homomorphism ${}^gM \to {}^gN$ such that ${}^g{f_x}:= f_{g^{-1}x}$, for any $x \in Q_0$. This defines an action of $G$ on $\mathrm{Mod}\mbox{-}\Lambda$. Moreover, the map $f\to {}^gf$ defines isomorphism of vector spaces $\Lambda(M,N) \approx \Lambda({}^gM, {}^gN)$. 

\vspace{0.1in}
The next theorem ensures the existence of a Galois semi-covering from $\mathrm{mod}\mbox{-}\Lambda$ to $\mathrm{mod}\mbox{-}\bar\Lambda$.
\begin{theorem}\label{Skmor}\cite[Theorem~3.11]{GoSaTr25}
Assume that a group $G$ acts on an algebra $\Lambda$ and $F: \Lambda \to \bar\Lambda$ is a Galois semi-covering. Then for any $M, N\in \mathrm{mod}\mbox{-}\Lambda$, the functor $F_\lambda: \mathrm{mod}\mbox{-}\Lambda \to \mathrm{mod}\mbox{-}\bar\Lambda$ induces the following isomorphisms of vector spaces:
\begin{equation}\label{Skmoreq}
\mathrm{Hom}_{\bar\Lambda}(F_\lambda M,F_\lambda N)\approx \begin{cases}\bigoplus_{g\in G} \mathrm{Hom}_\Lambda ({}^gM,N)&\mbox{ if } G_{M}\neq G;\\\bigoplus_{g\in G} \mathrm{Hom}_\Lambda(M,{}^gN)&\mbox{ if }G_{N}\neq G;\\\mathrm{Hom}_\Lambda^{|G|}(M,N)&\mbox{ if }G_{MN}= G.\end{cases}
\end{equation}
\end{theorem}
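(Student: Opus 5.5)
The plan is to realise $F_\lambda$ as a left adjoint and compute the composite of $F_\lambda$ with that adjoint explicitly, then read off the three cases of \eqref{Skmoreq}. Concretely, I will define the pull-up functor $F_\bullet:\mathrm{mod}\mbox{-}\bar\Lambda\to\mathrm{mod}\mbox{-}\Lambda$ by $F_\bullet(X)=X\circ F$, i.e. $F_\bullet(X)(x)=X(F(x))$ for $x\in Q_0$, with arrows acting through the images $F(\alpha)$ described in Definition \ref{dgsc}. The first step is to show that $F_\lambda\dashv F_\bullet$. I will write down the unit $M\to F_\bullet F_\lambda M$ (inclusion of $M(x)$ as the $x$-component of $\bigoplus_{F(y)=F(x)}M(y)$) and the counit $F_\lambda F_\bullet X\to X$ (summation over the fibre). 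The triangle identities then become a check, arrow by arrow, against the three cases (1)--(3) of Definition \ref{dgsc}.

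The second step is to compute $F_\bullet F_\lambda N$. On vertices lying in free orbits ($G_{i_0}\neq G$, so $|O_{i_0}|=n$ by the standing Assumption), the fibre of $F$ over $\bar e_{i_0}$ is the whole orbit $\{gi_0\}$. Hence $F_\bullet F_\lambda N(x)=\bigoplus_{g}N(g^{-1}x)=\bigoplus_g {}^gN(x)$. Formulas (1) and (2) of Definition \ref{dgsc}, where the arrow acts by $\sum_g M(g\alpha_h)$, show that this identification is compatible with the arrows. So $F_\bullet F_\lambda N\cong\bigoplus_{g\in G}{}^gN$ whenever the support of $N$ meets free orbits, which is the meaning of $G_N\neq G$. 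Adjunction then gives
$$\mathrm{Hom}_{\bar\Lambda}(F_\lambda M,F_\lambda N)\cong \mathrm{Hom}_\Lambda(M,F_\bullet F_\lambda N)\cong \bigoplus_{g\in G}\mathrm{Hom}_\Lambda(M,{}^gN),$$
which is the second case. The first case (when $G_M\neq G$) follows from the isomorphism $\mathrm{Hom}_\Lambda(M,{}^gN)\cong\mathrm{Hom}_\Lambda({}^{g^{-1}}M,N)$ recalled above, after reindexing $g\mapsto g^{-1}$. Alternatively, one can use the dual (right-adjoint) description of $F_\lambda$, which agrees with the left one because $G$ is finite.

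The third case, $G_{MN}=G$, where all vertices in the supports of $M$ and $N$ are $G$-fixed, needs a separate argument, because the fibres of $F$ are then single points. Here $F_\lambda M(\bar e_{i_0})=M(i_0)$, but in $\bar\Lambda$ the idempotent $\bar e_{i_0}$ splits as $\sum_{\rho\in\hat G}e_{i_0\rho}$ into $|G|$ orthogonal primitive pieces. I will show that $F_\lambda M$ decomposes as $\bigoplus_{\rho}M_\rho$, one copy of $M$ living on each vertex $(i_0,\rho)$, with arrows of type (3) preserving the $\rho$-grading. Since $G$ is abelian, $\hat G$ is a group and tensoring with $\rho$ permutes the characters. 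I will then check that morphisms between copies with different characters vanish, while morphisms within a single character reproduce $\mathrm{Hom}_\Lambda(M,N)$. This yields $\mathrm{Hom}_\Lambda(M,N)^{|G|}$.

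I expect the main obstacle to be the bookkeeping in the mixed situation. This is the case where an arrow joins a fixed vertex to a free orbit (case (2) of Definition \ref{dgsc}), so that the $\rho$-splitting at the fixed end must be matched with the orbit sum at the free end. First I will verify the adjunction locally on such arrows, using the idempotents $e_\rho=\frac1{|G_{i_0}|}\sum_g\rho(g)g$. Here invertibility of $|G|$ in $K$, which is implicit in the definition of $e_\rho$, is what makes the splitting exact. After that, the three cases follow from the global computation above.
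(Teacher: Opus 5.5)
There is a genuine gap, and it starts with how you read the hypotheses. You take $G_N\neq G$ to mean that the support of $N$ meets a free orbit, and $G_{MN}=G$ to mean that both supports consist of $G$-fixed vertices. In the statement (compare Remark \ref{indsk}), $G_M$ is the stabilizer $\{g\in G:{}^gM\cong M\}$ of the module, and under your reading the third case is false. The step that fails is ``arrows of type (3) preserve the $\rho$-grading''. Suppose an arrow $\alpha:i_0\to j_0$ between fixed vertices spans a $G$-eigenline with character $\chi$. Then $\alpha$ maps the copy of $M(i_0)$ at $(i_0,\rho)$ into the copy at $(j_0,\rho\chi^{-1})$. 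So the $|G|$ copies are glued together, and morphisms between different characters need not vanish.

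Here is a concrete counterexample. Let $G=\mathbb{Z}_2$ act on the Kronecker quiver $1\rightrightarrows 2$ by fixing both vertices and swapping the arrows $\alpha,\beta$; this is the same kind of action as $\bar\alpha\leftrightarrow\bar\beta$ in the Nazarova--Zavadskij example. Then $\alpha+\beta$ preserves characters, $\alpha-\beta$ flips them, and $\bar\Lambda$ is hereditary of type $\tilde{A}_3$. Take $M=(K\xrightarrow{\alpha=1,\ \beta=0}K)$. Your argument predicts $\dim\mathrm{End}_{\bar\Lambda}(F_\lambda M)=2$. But $F_\lambda M$ is the representation with $K$ at each of the four vertices and all four maps nonzero, so $\mathrm{End}_{\bar\Lambda}(F_\lambda M)=K$. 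This agrees with $\mathrm{Hom}_\Lambda(M,M)\oplus\mathrm{Hom}_\Lambda(M,{}^gM)=K\oplus 0$; here ${}^gM\ncong M$, so the first case of the statement applies.

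Your second case has a related hole. You identify $F_\bullet F_\lambda N$ with $\bigoplus_g{}^gN$ only at vertices of free orbits. Yet you conclude it for every $N$ whose support merely meets a free orbit, and the fixed vertices of that support are never checked. There, the one-copy formula $F_\lambda M(\bar e_{i_0})=M(i_0)$ that you quote would give $N(i_0)$ instead of $N(i_0)^{|G|}$. That formula even contradicts the third case for $\Lambda=K$ with trivial action.

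You should commit to the description in your third paragraph, with a copy of $M(i_0)$ at each $(i_0,\rho)$. This is the one forced by $F_\lambda\cong\bar\Lambda\otimes_\Lambda-$ (Remark \ref{faex}). With it, your adjunction idea works uniformly, and all the vertex and arrow bookkeeping, including the mixed arrows you flagged, becomes unnecessary:
\begin{enumerate}
\item Your $F_\bullet$ is restriction along $\Lambda\to\Lambda G$, up to the Morita equivalence with $\bar\Lambda$. By tensor--hom adjunction it is right adjoint to $\Lambda G\otimes_\Lambda-$.
\item The decomposition $\Lambda G=\bigoplus_g(1\otimes g)\Lambda$, together with $\lambda\cdot\bigl((1\otimes g)\otimes n\bigr)=(1\otimes g)\otimes g^{-1}(\lambda)n$, gives $F_\bullet F_\lambda N\cong\bigoplus_g{}^gN$ for every $N$.
\item Hence $\mathrm{Hom}_{\bar\Lambda}(F_\lambda M,F_\lambda N)\cong\bigoplus_g\mathrm{Hom}_\Lambda(M,{}^gN)$ for all $M,N$.
\item The first case follows by your reindexing. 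The third follows because ${}^gN\cong N$ for all $g$ when $G_{MN}=G$.
\end{enumerate}
The paper gives no proof of its own here; it imports the statement from its reference.
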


The following remark indicates that the functor $F_\lambda$ is exact and faithful. 
\begin{rmk}\label{faex}
\begin{enumerate} Let us enlist two important properties of $F_\lambda$:
\item[Exactness] The Riedtmann functor $\bar\Lambda\otimes _\Lambda -:\mathrm{mod}\mbox{-}\Lambda \to \mathrm{mod}\mbox{-}\bar\Lambda$ is equivalent to $F_\lambda: \mathrm{mod}\mbox{-}\Lambda \to \mathrm{mod}\mbox{-}\bar\Lambda$ \cite{Sard25}. Since $\bar\Lambda$ is a faithfully flat $\Lambda$-module, $F_\lambda$ is an exact functor.

\item[Faithfulness] Composing the isomorphism in Theorem \ref{Skmor} with the inclusion of ${}_\Lambda(M, N)$ into the morphism space on the right hand side of equation \ref{Skmoreq} in different cases (for example,  ${}_\Lambda(M, N)\hookrightarrow{} \bigoplus {}_{\Lambda}(gM,N)$ if $G_M\neq G$), we obtain a monomorphism of the form $f\mapsto \nu_{M,N}(f)$. This shows that, $F_\lambda$ is also faithful.
\end{enumerate}
\end{rmk}

Since $\Lambda$ is a finite-dimensional algebra, then any finitely presented $\Lambda$-module is finite-dimensional. Thus, we can replace the locality of endomorphism rings with the indecomposability of corresponding modules in Definition \ref{dense} and Definition \ref{densepair}. But the following remark \cite[Propositions~3.6, 3.7]{GoSaTr25} shows that, unlike Galois covering, Galois semi-covering does not necessarily preserve indecomposable modules.
\begin{rmk}\label{indsk}
For all $\bar M \in \mathrm{Ind}\mbox{-}\bar\Lambda$ and ${g\in G}$, there exists a unique $M \in \mathrm{Ind}\mbox{-}\Lambda$, we have
$$F_\lambda (^gM)= \begin{cases}\bar M &\mbox{ if } G_{M}\neq G;\\\bigoplus_{\hat{g}\in \hat{G}} \hat{g} \bar M &\mbox{ if }G_{M}= G.\end{cases}$$ 
\end{rmk}

S. Kasjan and G. Pastuszak show the existence of an independent pair of dense chains under an additive fully faithful right-exact functor or an additive fully faithful exact functor \cite[Theorem 3.13]{KaPa14}. But the following remark says that $F_\lambda$ is not full.   
\begin{rmk}
Suppose $G$ is a finite abelian group of $K$-linear automorphisms of $\Lambda$, and $\bar\Lambda$ is the associated skew group algebra. Then Remark \ref{indsk} ensures that the exact faithful Riedtmann functor $F_\lambda: \mathrm{mod}\mbox{-}\Lambda \to \mathrm{mod}\mbox{-}\bar\Lambda$ is not full.
\end{rmk}

The question is whether $F_\lambda$ preserves independent pairs of dense chains of pointed modules and wide posets of pointed modules (even under certain assumptions).  Let $\theta$ be a finitely presented $\Lambda$-module. Since $F_\lambda$ is exact, $F_\lambda(\theta)$ is also a finitely presented $\bar{\Lambda}$-module. If $(M; \chi_M)$ is a $\theta$-pointed $\Lambda$-module, we set $\chi_{F_\lambda(M)}= F_\lambda(\chi_M)$ and
$$F_\lambda(M; \chi_M):= (F_\lambda(M); F_\lambda(\chi_M))= (F_\lambda(M); \chi_{F_\lambda(M)}).$$

Then, by our assumptions, $F_\lambda(M; \chi_M)$ is an $F_\lambda(\theta)$-pointed $\bar\Lambda$-module. Recall that a $\theta$-pointed module $(M; \chi_M)$ is identified with the pointed module $(M; \chi_M(1))$ if $\theta = \Lambda$. Thus, set $F_\lambda(M; m):= F_\lambda(M; \chi_M)$ where $\chi_M: \Lambda \to M$ is a homomorphism defined by $\chi_M(1) = m$, for any pointed $\Lambda$-module $(M;m)$. This implies that $F_\lambda(M; \chi_M)$ is a well defined $F_\lambda(\theta)$-pointed $\bar\Lambda$-module for any $\theta$-pointed $\Lambda$-module $(M; \chi_M)$.

\begin{proposition}\label{pm}
Assume that $G$ acts on an algebra $\Lambda$ with $\bar{\Lambda}$ as its associated skew group algebra. Then for any $\theta$-pointed $\Lambda$-modules $(M; \chi_M)$ and $(N; \chi_N)$, we have 
\begin{enumerate}
\item $F_\lambda((M; \chi_M) \ast (N; \chi_N))\equiv F_\lambda(M; \chi_M)\ast F_\lambda(N; \chi_N)$;
\item $(M; \chi_M)\leq (N; \chi_N)$ if and only if $F_\lambda(M; \chi_M) \leq F_\lambda(N; \chi_N)$.
\end{enumerate}   
\end{proposition}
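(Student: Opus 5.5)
Part (1) rests on exactness of $F_\lambda$ (Remark \ref{faex}), which says that $F_\lambda$ preserves cokernels and finite direct sums. By construction, the pointed pushout $M\ast N$ is the cokernel of the map $\theta\to M\oplus N$, $l\mapsto(\chi_M(l),-\chi_N(l))$. Applying $F_\lambda$ gives an exact sequence
\[
F_\lambda(\theta)\xrightarrow{(F_\lambda\chi_M,\,-F_\lambda\chi_N)} F_\lambda M\oplus F_\lambda N\to F_\lambda(M\ast N)\to 0 .
\]
So there is a canonical isomorphism $F_\lambda(M\ast N)\cong F_\lambda M\ast F_\lambda N$ that is compatible with the structure maps $F_\lambda(\epsilon_M)$, $F_\lambda(\epsilon_N)$ and $\epsilon_{F_\lambda M}$, $\epsilon_{F_\lambda N}$. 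Because $\chi_{M\ast N}=\epsilon_M\chi_M$, functoriality yields $F_\lambda(\chi_{M\ast N})=F_\lambda(\epsilon_M)F_\lambda(\chi_M)$. This is carried to $\epsilon_{F_\lambda M}\chi_{F_\lambda M}=\chi_{F_\lambda M\ast F_\lambda N}$. The isomorphism is therefore an $F_\lambda(\theta)$-pointed isomorphism, and in particular the two pointed modules are $\equiv$-equivalent. I will verify the cokernel identification directly on each $F_\lambda M(\bar e_{i_0})=\bigoplus_{F(x)=\bar e_{i_0}}M(x)$; this is immediate because $F_\lambda$ is a direct sum of the components at each vertex.

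For the forward implication of part (2), suppose $(M;\chi_M)\le(N;\chi_N)$. Then there is a pointed map $f\colon N\to M$ with $f\chi_N=\chi_M$. Applying $F_\lambda$ gives $F_\lambda(f)F_\lambda(\chi_N)=F_\lambda(\chi_M)$, which is a pointed map. The same argument shows that $F_\lambda$ respects $\equiv$, so the statement is well defined on classes.

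The converse is the main obstacle, because $F_\lambda$ is not full. Suppose $\phi\colon F_\lambda N\to F_\lambda M$ satisfies $\phi F_\lambda(\chi_N)=F_\lambda(\chi_M)$. By Theorem \ref{Skmor}, $\phi$ corresponds to a family $(\phi_g)_{g\in G}$, with $\phi_g\in\mathrm{Hom}_\Lambda({}^gN,M)$ in the first case, and analogously in the other cases. The plan is to show that the component $\phi_e$ indexed by the identity already satisfies $\phi_e\chi_N=\chi_M$.

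First, I will make the isomorphism of Theorem \ref{Skmor} explicit via the orbit decomposition $F_\lambda M(\bar e_{i_0})=\bigoplus_{x\in O_{i_0}}M(x)$. In these coordinates, $\phi$ is a block matrix whose $(x,gx)$ blocks are the components of $\phi_g$.

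Second, I will use that $F_\lambda(\chi_N)$ and $F_\lambda(\chi_M)$ are block diagonal, since they are images of genuine $\Lambda$-maps. Comparing the diagonal blocks of $\phi F_\lambda(\chi_N)=F_\lambda(\chi_M)$ on each summand $M(x)$ then gives $\phi_e\chi_N=\chi_M$ plus terms of the form $\phi_g\,{}^g\chi_N$ that land in off-diagonal positions. Projecting onto the identity-indexed component kills these off-diagonal terms, so $\phi_e\colon(N;\chi_N)\to(M;\chi_M)$ is pointed.

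When $G_{MN}=G$, the space is $\mathrm{Hom}^{|G|}_\Lambda(M,N)$. In that case I intend to decompose along the idempotents $e_\rho$, average with $\frac{1}{|G|}\sum\rho(g)g$, and pick the trivial-character component. This requires $|G|$ to be invertible in $K$, which I may need to assume or justify in the same way as in Theorem \ref{NSIDPM1}.

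The delicate part is to check that the identification in Theorem \ref{Skmor} really is the block decomposition and that $F_\lambda(\chi)$ sits in the identity component. I would first carry this out when $G_M\neq G$, where it is a plain Galois-covering computation, and then adapt it to the fixed case.
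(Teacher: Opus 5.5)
Part (1) and the forward half of (2) follow the paper. The paper cites the Kasjan--Pastuszak fact that exact functors preserve pointed pushouts, which is the cokernel computation you unwind. The forward implication is just functoriality.

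\textbf{The converse in (2).} Here your route differs from the paper's, and it is the one that works. The paper argues that $F_\lambda$ is faithful and faithful functors reflect commutative triangles. But that only gives $f\chi_N=\chi_M$ for a $\Lambda$-map $f$ that is already given. A pointed map $\phi\colon F_\lambda N\to F_\lambda M$ need not have the form $F_\lambda(f)$, because $F_\lambda$ is not full. Extracting the identity component of $\phi$ is exactly the missing step.

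You can do this uniformly and without coordinates. Use $F_\lambda\cong\bar\Lambda\otimes_\Lambda-$ (Remark \ref{faex}) and the bimodule splitting $\bar\Lambda=\bigoplus_{g\in G}\Lambda\otimes g$. Then the unit $\iota_X\colon X\to\mathrm{Res}\,F_\lambda X\cong\bigoplus_g{}^gX$ is natural. It has a $\Lambda$-linear retraction $\pi_X$, namely projection onto the $g=e$ summand. So $\psi:=\pi_M\,\mathrm{Res}(\phi)\,\iota_N$ is a $\Lambda$-map, and
\[
\psi\chi_N=\pi_M\,\phi\,F_\lambda(\chi_N)\,\iota_\theta=\pi_M\,F_\lambda(\chi_M)\,\iota_\theta=\pi_M\,\iota_M\,\chi_M=\chi_M .
\]
This needs no case split from Theorem \ref{Skmor}, no explicit block description, and no invertibility of $|G|$. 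The same computation shows that the components $\phi_g$ with $g\neq e$ annihilate the point.

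\textbf{Two cautions about your plan as written.}

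First, the coordinates are governed by vertex stabilizers, not module stabilizers. A module with $G_M\neq G$ can be supported at fixed vertices, for example the special vertices of a skew-gentle algebra. So that case is not a plain Galois-covering computation.

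Second, at a fixed vertex $i_0$, in the $e_{i_0\rho}$-coordinates, the identity component is the average $\frac{1}{|G|}\sum_\rho\phi_\rho$ of the $\rho$-blocks, not the trivial-character block. For $G=\mathbb{Z}_2$ one gets $\phi_\pm=\phi_e\pm\phi_g$ at $i_0$. Here $\phi_g$ need not vanish at $i_0$ even when $\phi$ is pointed. So a single $\phi_\rho$ satisfies the pointedness equation at $i_0$, but it does not glue with the diagonal blocks on free orbits into a $\Lambda$-homomorphism. The adjunction argument avoids both issues.
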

\begin{proof}
$(1)$ Since $F_\lambda$ is exact by Remark \ref{faex}, the result follows from \cite[Proposition 3.11]{KaPa14}.

$(2)$ If $(M; \chi_M)\leq (N; \chi_N)$ then there exists a pointed homomorphism $f:(M; \chi_M)\to (N; \chi_N)$ i.e. $f:M\to N$ with $f\chi_M=\chi_N$. Remark \ref{faex} states that $F_\lambda$ is faithful, and thus the assertion holds since faithful functors reflect commutative triangles. 
\end{proof}

\section{Superdecomposable modules over skew gentle algebras}
G. Puninski shows the existence of superdecomposable pure-injective modules over any arbitrary non-domestic string algebra in \cite{Puni04}. In this section, we present some properties that are fundamental for constructing an independent pair of dense chains of pointed modules over a string algebra and show that skewness preserves superdecomposable modules under certain conditions by considering the Galois semi-covering functor between the module categories of a gentle algebra and its associated skew-gentle algebra.

\vspace{0.05in}
\noindent{\textbf{Classification problem:}} We recall that a finite-dimensional algebra $\Lambda$ over an algebraically closed field $K$ is tame if, for any natural number $d$, the set of all isomorphism classes of $d$-dimensional indecomposable $\Lambda$-modules can be parameterised, up to finitely many elements, by a finite number of regular $1$-parameter families. Let $\mu_\Lambda(d)$ be the minimal number of $1$-parameter families needed to parameterise the $d$-dimensional indecomposables in the above sense. A (tame) algebra $\Lambda$ is of polynomial growth (resp. domestic) \cite{CMR80} if there exists a natural number $g$ such that $\mu_\Lambda(d)\leq d^g$ (resp. $\mu_\Lambda(d)\leq g$) for all $d\geq 1$ and $\Lambda$ is $m$-domestic if $m$ is the least such integer $g$. We say that $\Lambda$ is tame of exponential growth if $\mu_\Lambda(d)> r^d$ for infinitely many $d\in \mathbb{N}$ and some real number $r>1$.

\vspace{0.05in}
\noindent{\textbf{Special biserial algebras:}} Given a quiver $Q= (Q_0,Q_1,s,t)$ and an admissible ideal $I$ of $KQ$, the algebra $KQ/I$ is called a special biserial algebra if 
\begin{enumerate}
\item at every vertex $v$ in $Q$ there are at most two arrows starting at $v$ and there are at most two arrows ending at $v$;
\item for every arrow $\alpha$ in $Q$ there exists at most one arrow $\beta$ such that $\alpha \beta\notin I$ and there
exists at most one arrow $\gamma$ such that $\gamma\alpha \notin I$.
\end{enumerate}

If the ideal $I$ is generated by paths of length at least two, then it is known as a string algebra. Gentle algebras form a subclass of string algebras for which the ideal $I$ is generated by paths of length precisely two. In this case, we say $(Q,I)$ is a gentle pair.

By a word of $Q$, we mean a concatenation $\alpha= \alpha_1\hdots \alpha_n$ of symbols of $Q$ such that $t(\alpha_i)= s(\alpha_{i+1})$ for each $1\leq i < n$. The source of $\alpha$ is defined to be $s(\alpha)= s(\alpha_1)$ and the target of $\alpha$ is defined to be $t(\alpha)= t(\alpha_n)$. The inverse of $\alpha$ is defined to be the word $\alpha^{-1}= \alpha_n^{-1}\hdots \alpha_1^{-1}$. A string $u$ is a word $\alpha=  \alpha_1\hdots \alpha_n$ such that $\alpha_i \neq \alpha_{i+1}^{-1}$ for all $1\leq i\leq n-1$ and no sub-string of $u$ or its inverse is in $I$. A band is defined to be a cyclic string $b$ such that each power $b^m$ is a string, but $b$ is not a proper power of any string. If a string algebra has finitely many bands, then it is called domestic; if not, it is non-domestic. For a string $S= \alpha_1\hdots \alpha_n$, the string module associated to S is an $(n+1)$-dimensional module $M(S)$ with $K$-basis $\{z_1,\hdots z_{n+1}\}$ (the canonical basis of $M(S)$) and with multiplication defined as:

$$a· z_j=\begin{cases} z_{j-1} &\mbox{ if } j\geq 2 \mbox{ and } a= \alpha_{j-1};\\z_{j+1} &\mbox{ if } j\leq n \mbox{ and } a^{-1}= \alpha_j; \\0 &\mbox{ otherwise. } \end{cases}$$ for any direct arrow $a$ and $j\in \{1, \hdots, n+1\}$. Any string module is indecomposable and $M(S_1)\approx M(S_2)$ if and only if $S_1= S_2$ or $S_1= S_2^{-1}$. For more details, follow \cite{BR}.

\vspace{0.05in}
\noindent{\textbf{Skew-gentle algebras:}} Skew-gentle algebras belong to the class of clannish algebras introduced by Crawley Boevey \cite{CB89}, which are found as the skew-group algebras of gentle algebras equipped with a certain $\mathbb{Z}_2$-action when the characteristic of $K$ is different from two. Let $(Q,I)$ be a gentle pair. Let $Sp$ be a subset of the vertices of the quiver $Q$ and the elements of $Sp$ are called special vertices; the remaining vertices are ordinary. For a triple $(Q, Sp, I)$, consider the pair $(Q^{Sp}, I^{Sp})$, where $Q_0^{Sp}:= Q_0, Q_1^{Sp}:= Q_1\cup \{\alpha_i\mid i\in Sp, s(\alpha_i)= t(\alpha_i) = i\}$ and $I^{Sp}:= I\cup \{\alpha_i^2\mid i\in Sp\}$. A triple $(Q, Sp, I)$ is called skew-gentle if the corresponding pair $(Q^{Sp}, I^{Sp})$ is gentle. We associate with each vertex $i\in Q_0$ a set, denoted by $Q_0(i)$, as follows: If $i$ is an ordinary vertex then $Q_0(i)= \{i\}$; if $i$ is special then $Q_0(i)= \{i^-, i^+\}$. We denote by $(Q^{sg}, I^{sg})$ the pair defined as follows:
\vspace{-0.01in}
$$Q_0^{sg}:= \cup _{i\in Q_0} Q_0(i),$$
$$Q_1^{sg}[a, b]:= \{(a, \alpha, b)\mid \alpha\in Q_1, a\in Q_0(s(\alpha)), b\in Q_0(t(\alpha))\},$$
$$I^{sg}:= \{\sum_{b\in Q_0(s(\alpha))} \lambda_b(b, \alpha, c)(a, \beta, b)\mid \alpha \beta\in I, a\in Q_0(s(\beta)), c\in Q_0(t(\alpha))\},$$
where $\lambda_b= -1$ if $b= i^-$ for some $i\in Q_0$, and $\lambda_b= 1$ otherwise. Note that the relations in $I^{sg}$ are zero relations or commutative relations. A $K$-algebra $\bar{\Lambda}$ is called skewed-gentle if it is Morita equivalent to a factor algebra $KQ^{sg}/\langle I^{sg}\rangle$, where the triple $(Q, Sp, I)$ is skewed-gentle. The corresponding pair $(Q^{sg}, I^{sg})$ is also said to be skewed-gentle.

Following \cite{GePe99}, we recall below the construction of the gentle algebra $\Lambda$ for any given skewed-gentle triple $(Q, Sp, I)$ such that the corresponding skewed-gentle algebra $\bar{\Lambda}:=KQ^{sg}/\langle I^{sg}\rangle$ is Morita equivalent to a skew-group algebra $\Lambda\mathbb{Z}_2$ in the case of $\mathrm{char}K\neq 2$. 

\vspace{0.05in}
\noindent{\textbf{Group action over a gentle pair:}} For a given special (resp., ordinary) vertex $i$, denote by $Q_0[i]$ the set $\{i\}$ (resp., $\{i^-, i^+\}$). Consider the pair $(Q^g, I^g)$, where $Q_0^g:= \sum_{i\in Q_0}Q_0[i], Q_1^g:= \{\alpha^+, \alpha^-\mid \alpha \in Q_1\}$, and 

\begin{minipage}[b]{0.48\linewidth}
\centering
$$s(\alpha^\pm):= \begin{cases} s(\alpha)^\pm, &\mbox{ if } s(\alpha) \notin Sp;\\s(\alpha), &\mbox{ if } s(\alpha) \in Sp,\end{cases}$$  
    \end{minipage}
\hspace{0.4cm}
\begin{minipage}[b]{0.46\linewidth}
\centering
$$t(\alpha^\pm):= \begin{cases} t(\alpha)^\pm, &\mbox{ if } t(\alpha) \notin Sp;\\t(\alpha), &\mbox{ if } t(\alpha) \in Sp.\end{cases}$$
    \end{minipage}

Moreover, $I^g:= \{\beta^+\alpha^+, \beta^-\alpha^-\mid \beta\alpha\in I, t(\alpha)\notin Sp\}\cup \{\beta^+\alpha^-, \beta^-\alpha^+\mid \beta\alpha\in I, t(\alpha)\in Sp\}$.

The algebra $\Lambda:= KQ^g/\langle I^g\rangle$ is gentle \cite{GePe99}. Call $(Q^g, I^g)$ (resp., $KQ^g/\langle I^g\rangle$) the associated gentle pair (resp., associated gentle algebra) of $(Q, Sp, I)$ or $KQ^{sg}/\langle I^{sg}\rangle$. The group action of $\mathbb{Z}_2=\langle g\rangle$ over $\Lambda$ is defined as: $g(i^\pm):= i^\mp, g(j):= j, g(\alpha^\pm):= \alpha^\mp$ for all $i\in Q_0\setminus Sp, j\in Sp$ and $\alpha\in Q_1$. Then the obtained skew-group algebra $\Lambda\mathbb{Z}_2$ is Morita equivalent to $\bar{\Lambda}$.

\vspace{0.05in}
\noindent{\textbf{Independent dense pair of pointed modules over string algebra:}}
Given an arrow $a\in Q_1$, we define $S(a)$ to be the set of strings over a string algebra $\Lambda$ that start with $a$. We recall from \cite{BR} that there is a linear ordering $<$ on $S(a)$ such that $S < T$ if and only if one of the following conditions is satisfied:
\begin{itemize}
\item $S \phi^{-1}U = T$ for an arrow $\phi$ and a string $U$;
\item $S= T \psi V$ for an arrow $\psi$ and a string $V$;
\item $S= S' \psi W$ and $T= S'\phi^{-1}X$ for some arrows $\phi, \psi$ and strings $S', W, X$.
\end{itemize}

A string algebra $\Lambda$ possesses two different bands $U$ and $V$ starting with the same direct arrow and ending with the same inverse arrow such that $U$ is not a prolongation of $V$ and $V$ is not a prolongation of $U$, i.e. $U\neq VX$ and $V\neq UY$ for any strings $X$ and $Y$. Moreover, assume that $U< V$ and let $\Sigma(U;V)$ be the set of all finite words over the alphabet $\{U;V\}$, including the empty word $\phi$. The next theorem provides a dense chain of pointed modules.

\begin{theorem}\cite[5.3]{KaPa11}\label{DCWE}
Assume that $U$ and $V$ are two different bands such that $U< V$ and $U$ is not a prolongation of $V$ or vice versa. Let $S, T\in \Sigma(U,V)\}$. Then the set $L^T_S(U,V):= \{SXTU; X\in \Sigma(U,V)\}$ is a dense chain without endpoints.    
\end{theorem}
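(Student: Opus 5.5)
The plan is to verify directly that $L^T_S(U,V)$ is linearly ordered, has no least or greatest element, and is dense. Everything will be done inside the ordered set $S(a)$ of strings starting with the common first direct arrow $a$ of $U$ and $V$. First I will check that every word $SXTU$ with $X\in\Sigma(U,V)$ is a string starting with $a$. Since $U$ and $V$ are bands that start with the same direct arrow and end with the same inverse arrow, any concatenation of copies of $U$ and $V$ is again a string: the junction $U\cdot V$ or $V\cdot U$ looks locally like the junction $U\cdot U$ or $V\cdot V$, so no relation and no cancellation $\alpha\alpha^{-1}$ is created. The set is therefore a subset of $S(a)$, hence totally ordered by the order $<$ recalled from \cite{BR}. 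Next I will show that $X\mapsto SXTU$ is injective, so that distinct words give distinct strings. This uses that $U$ is not a prolongation of $V$ and vice versa, so that $U$ and $V$ are not prefixes of one another, together with the assumption that they are distinct bands.

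The key technical lemma concerns comparison. Write $U=W\psi U'$ and $V=W\phi^{-1}V'$, where $W$ is their longest common prefix. Such a splitting exists because neither band is a prefix of the other, and the inequality $U<V$ forces the first differing letter to be direct in $U$ and inverse in $V$, by the third clause of the definition of $<$. From this I get, for arbitrary strings $Y,Z$ that extend correctly, $PUY<PVZ$ for every common prefix $P$. In other words, comparing two words of $\Sigma(U,V)$ reduces to comparing them lexicographically with $U<V$. The only case not covered is when one word is a prefix of the other. There I will compare a word $X$ with a word $X X'$ by looking at $X\,T U$ against $X X' T U$, so the question becomes how $TU$ compares with $X'TU$. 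That comparison is again decided by the first letter at which the two differ, and the argument uses the fixed tail $TU$, which always ends in $U$.

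With this lexicographic description in hand, the three properties follow from explicit constructions.
\begin{itemize}
\item \textbf{No endpoints.} Given $X$, the strings $SXUTU$ and $SXVTU$ will be strictly below and above $SXTU$, provided I place the inserted letter so that its first disagreement with $TU$ has the right sign. If necessary I will instead use $SXU^kTU$ and $SXV^kTU$ for suitable $k$, or insert $U$ or $V$ at the front of $X$.
\item \textbf{Density.} Given $SXTU<SYTU$, I will find $Z$ strictly between them. If $X$ and $Y$ first differ at some position, with $U$ in $X$ and $V$ in $Y$, I take $Z$ to be $X$ with an extra $V$ appended; appending lands beyond the branching point, so $Z$ stays below $Y$ while it moves above $X$. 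If instead $X$ is a prefix of $Y$, I extend $X$ by a word chosen so that its tail separates $TU$ from the tail of $Y$.
\end{itemize}

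The main obstacle is the prefix case. Comparing $TU$ with $X'TU$ depends on how $T$ begins relative to $U$ and $V$. The ending $U$ in $TU$ is precisely what makes these comparisons uniform, and I expect to need to unwind the definition of $<$ carefully there, essentially reproducing the argument of \cite[5.3]{KaPa11}.
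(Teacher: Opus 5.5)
The paper does not prove this statement; it quotes it from \cite[5.3]{KaPa11}. So I judge your argument on its own.

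\textbf{What is fine.} Your preliminary steps are correct. Concatenations of $U$ and $V$ are strings, and $\{U,V\}$ is a prefix code. The splitting $U=W\psi U'$, $V=W\phi^{-1}V'$ gives $PU\cdots<PV\cdots$.

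\textbf{The gap.} The step you call the main obstacle is left open, and the one concrete claim you make there is false. The comparison of $TU$ with $X'TU$ is \emph{not} always decided by a first differing letter, because $TU$ can be a proper prefix of $X'TU$. For example, with $T=U$ and $X'=U$ you are comparing $UU$ with $UUU$.

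The missing idea is the rule for proper extensions. Every nonempty $Q\in\Sigma(U,V)$ begins with the direct arrow $a$, so the second clause in the definition of $<$ gives $wQ<w$. Hence the order on nonempty block words is lexicographic with $U<V$, and with the end of a word placed above both letters. This is exactly what your prefix case needs, and it is why the tail must end in $U$. Among all nonempty words of $\Sigma(U,V)$, nothing lies strictly between $wV$ and $w$.

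\textbf{How the rule closes the argument.} Put $R=TU$.

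No endpoints:
\begin{itemize}
\item Write $R=V^kU\cdots$. Then $VR=V^{k+1}U\cdots$, so $SXR<SXVR$.
\item Either $R=U^m$, and then $SXUR$ is a proper extension of $SXR$; or $R=U^kV\cdots$, and then $UR=U^{k+1}V\cdots$. In both cases $SXUR<SXR$.
\end{itemize}
So your hedges are unnecessary.

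Density: split on the full words $w_1=SXR<w_2=SYR$, not on $X$ and $Y$. Since $w_2$ cannot be a proper extension of $w_1$, there are two cases.
\begin{itemize}
\item The words branch, with $U$ in $w_1$ and $V$ in $w_2$. Take $Z=YR$: then $w_2R<w_2$ by the rule, and $w_2R$ branches from $w_1$ at the same place, so $w_1<w_2R$.
\item $w_1=w_2Q$ with $Q$ nonempty and ending in $U$, say $Q=V^kUQ'$. Take $Z=YRV^{k+1}$.
\end{itemize}

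These witnesses extend the larger element. Naively extending the smaller one fails in your prefix case. For $S$ empty, $T=V$, $X$ empty and $Y=V$, one has $VU<VVU$, and $Z=XV$ just reproduces $VVU$. The element strictly in between is $VVUVU$.
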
 

A pair $(U, V)$ of two different bands over the string algebra $\Lambda$ that start with the same direct arrow and end with the same inverse arrow is $Q$-generating provided $U<V$ and $U$ is not a prolongation of $V$, or vice versa. For such a pair $(U, V)$ of bands over the string algebra $\Lambda$, the above theorem implies that the set $L^T_S(U,V)$ is isomorphic to the poset $Q$ of rational numbers for any strings $S, T\in \Sigma(U,V)$. Given two $Q$-generating pairs $(U, V)$ and $(U^{-1}, V^{-1})$ and $S,T\in \Sigma(U,V); S',T'\in \Sigma(U^{-1}, V^{-1})$ we can produce an independent pair of dense chains of pointed modules. Assume that $S=s_1\hdots s_n$ is a string over the string algebra $\Lambda$, $M(S)$ is the associated string module, and $z_1^S\in M(S)$ is the first element of the canonical $K$-basis of $M(S)$. We call the pointed $\Lambda$-module $(M(S), z_1^S)$ the canonical pointed string module associated with $S$. Although $M(S)\approx M(S^{-1})$, usually the pointed modules $(M(S), z_1^S)$ and $(M(S^{-1}), z_1^{S^{-1}})$ are not isomorphic. The next lemma indicates how the relation between strings determines a homomorphism between the associated modules.

\begin{lemma}\cite[3.1]{Pu08}\label{MBGA}
Assume that $a\in Q_1, S,T\in S(a)$ and $S < T$. Then there is a pointed $\Lambda$-homomorphism $f_{(S,T)}: (M(S); z_1^S)\to (M(T), z_1^T)$ of the canonical pointed string modules $(M(T), z_1^T)$ and $(M(S), z_1^S)$.    
\end{lemma}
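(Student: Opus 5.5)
The plan is to exhibit a pointed homomorphism $f:M(S)\to M(T)$ sending $z_1^S$ to $z_1^T$ directly from the combinatorics of strings, using the standard description of homomorphisms between string modules: a "factor string" of $S$ which is simultaneously a "substring" of $T$ yields a graph map. Concretely, we will locate a common initial segment $C$ of $S$ and $T$ with the following two properties. First, $C$ is closed under taking images in $M(S)$, in the sense that $M(C)$ is a quotient of $M(S)$. Second, $C$ is closed under taking submodules in $M(T)$, in the sense that $M(C)$ is a submodule of $M(T)$. Composing the quotient map $M(S)\twoheadrightarrow M(C)$ with the inclusion $M(C)\hookrightarrow M(T)$ then gives $f$. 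Since $C$ starts at the beginning of both strings (both lie in $S(a)$), the basis vector $z_1^S$ goes to $z_1^C$ and then to $z_1^T$, so $f$ is pointed.

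We split into the three cases defining $S<T$. In case (i), $T=S\phi^{-1}U$. Take $C=S$, so the quotient is the identity. We then need $M(S)\subseteq M(T)$. The basis vectors $z_1^T,\dots,z_{n+1}^T$ span a submodule, because the only letter joining them to the rest of $T$ is the inverse arrow $\phi^{-1}$, and it acts from the remaining vectors toward $z_{n+1}^T$, never out of the span. In case (ii), $S=T\psi V$. Take $C=T$, so $M(T)$ is the target itself. The quotient $M(S)\to M(T)$ kills the span of the tail vectors beyond the initial $T$-segment. This span is a submodule, since the direct arrow $\psi$ maps the first tail vector onto the last vector of the $T$-segment, and does not map anything out of the tail. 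In case (iii), $S=S'\psi W$ and $T=S'\phi^{-1}X$. Take $C=S'$. It is a quotient of $M(S)$ for the same reason as in case (ii), and a submodule of $M(T)$ for the same reason as in case (i).

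The main point to check carefully is the orientation conventions: we must confirm that, with the action formula for $M(S)$ given above ($a\cdot z_j=z_{j-1}$ when $a=\alpha_{j-1}$), an inverse letter at the junction really makes the initial segment a submodule, and a direct letter makes the tail a submodule. If the conventions turn out to be reversed, the roles of "direct" and "inverse" in the ordering of \cite{BR} swap accordingly, and the same construction works with the ordering read consistently.

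We also verify that the maps are $\Lambda$-linear. This reduces to checking that the action on the kept basis vectors is unchanged, which holds because the relations only involve substrings internal to $C$, and $C$ is a string. Finally, the composite clearly satisfies $f(z_1^S)=z_1^T$.
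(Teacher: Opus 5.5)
Your strategy of factoring through a common initial segment, which is a factor string of one module and a substring of the other, is the standard graph-map argument behind Puninski's lemma; the paper itself only cites that lemma. The gap is the step you flag as the crux: under the paper's action formula your closure claims run the wrong way. There a direct letter $\alpha_i$ sends $z_{i+1}\mapsto z_i$, whereas an inverse letter $\alpha_j=b^{-1}$ gives $b\cdot z_j=z_{j+1}$.

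\begin{itemize}
\item In case (i), $T=S\phi^{-1}U$ with $S$ of length $n$, so $\phi\cdot z^T_{n+1}=z^T_{n+2}$. The span of $z^T_1,\dots,z^T_{n+1}$ is therefore not a submodule; the tail is, and $M(S)$ is a \emph{quotient} of $M(T)$.
\item In case (ii) your own description ($\psi$ sends the first tail vector to the last vector of the $T$-segment) says exactly that the tail is not closed. It is the $T$-segment that is a submodule, so $M(T)\subseteq M(S)$.
\item Case (iii) is reversed in the same way: $M(S')$ is a quotient of $M(T)$ and a submodule of $M(S)$.
\end{itemize}

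So with the printed conventions every case yields a pointed map $(M(T),z^T_1)\to(M(S),z^S_1)$. The direction you set out to prove can genuinely fail. Take the path algebra of $1\xrightarrow{a}2\xleftarrow{\phi}3\xrightarrow{c}4$ with $S=a<T=a\phi^{-1}c$. A homomorphism $f\colon M(S)\to M(T)$ with $f(z^S_1)=z^T_1$ must send $z^S_2$ to $z^T_2$: this is the only basis vector at vertex $2$, and the coefficient is forced by $a\cdot f(z^S_2)=z^T_1$. But then $\phi\cdot f(z^S_2)=z^T_3\neq 0=f(\phi\cdot z^S_2)$.

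Your fallback (``if the conventions are reversed, read the ordering consistently'') does not repair this. The order on $S(a)$ and the action are both fixed in the paper, so swapping either one changes the statement rather than proving it. Nor can the action convention be flipped for free. The companion pushout lemma (Puninski's 3.2) glues $M(S)$ and $M(T)$ at $z_1$, so it needs $z_1$ to be annihilated by every arrow. Under the printed action this holds exactly because strings in $S(a)$ begin with a direct letter; after the flip it fails, since $a\cdot z^S_1=z^S_2\neq 0$.

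With the closure checks corrected, your construction proves that a pointed map $(M(T),z^T_1)\to(M(S),z^S_1)$ exists whenever $S<T$, i.e.\ $\overline{(M(S),z^S_1)}\le\overline{(M(T),z^T_1)}$ in the lattice of pointed modules. So the arrow in the displayed statement is reversed relative to the paper's own conventions. That version is all the later dense-chain results need, since the reverse of a $\mathcal{Q}$-chain is again a $\mathcal{Q}$-chain. As written, however, your proposal does not establish the statement.
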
 

Suppose $T= t_1\hdots t_k, S= s_1\hdots s_m$ are strings over $\Lambda$ such that $TS$ is also a string. Denote by $z^{(T,S)}$ the element $z^{TS}_{k+1}$ of the canonical basis $(z^{TS}_1\hdots z^{TS}_{k+1}\hdots z^{TS}_{k+m+1})$ of $M(TS)$. We discuss the pointed pushout of the pointed modules in the next lemma. 

\begin{lemma}\cite[3.2]{Pu08} 
Assume that $T,S$ are strings over $\Lambda$ such that $T^{-1}S$ is also a string. Then the pointed module $(M(T^{-1}S), z^{(T^{-1},S)})$ is the pointed pushout of the pointed modules $(M(S), z_1^S)$ and $(M(T), z_1^T)$.
\end{lemma}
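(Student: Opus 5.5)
We need to prove Lemma [Pu08, 3.2]: if $T^{-1}S$ is a string, then $(M(T^{-1}S), z^{(T^{-1},S)})$ is the pointed pushout of $(M(S), z_1^S)$ and $(M(T), z_1^T)$. Here $\theta=\Lambda$, so the pushout is $M(S)\ast M(T)=(M(S)\oplus M(T))/K(z_1^S,-z_1^T)$, pointed by the common image of the two points. The plan is to produce an explicit isomorphism of pointed modules between this pushout and $(M(T^{-1}S), z^{(T^{-1},S)})$.

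\textbf{Step 1: bases.} Write $T=t_1\cdots t_k$ and $S=s_1\cdots s_m$. Then $T^{-1}S=t_k^{-1}\cdots t_1^{-1}s_1\cdots s_m$. Its canonical basis is
$$w_1,\dots,w_{k+1},\dots,w_{k+m+1},$$
with $w_{k+1}=z^{(T^{-1},S)}$ the distinguished point.

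\textbf{Step 2: the two maps into $M(T^{-1}S)$.} Define $\iota_S:M(S)\to M(T^{-1}S)$ by $z_j^S\mapsto w_{k+j}$. Define $\iota_T:M(T)\to M(T^{-1}S)$ by $z_j^T\mapsto w_{k+2-j}$.
\begin{itemize}
\item Both maps are $K$-linear injections whose images are the spans of the two ``halves'' of the string $T^{-1}S$, and these halves overlap exactly in $Kw_{k+1}$.
\item Each map is $\Lambda$-linear. The multiplication rule for string modules is local: an arrow $a$ acts on a basis vector only through the adjacent letters of the word. Inside $T^{-1}S$, the letters adjacent to the positions in the $S$-part are exactly the letters of $S$, and similarly for the $T$-part read backwards, since the letters of $T^{-1}$ in reverse order are the inverses of the letters of $T$.
\item The only point needing care is the junction vertex $w_{k+1}$. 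There one must check that $a\cdot z_1^S$ and $a\cdot z_1^T$ map correctly. In $M(S)$ the vector $z_1^S$ has only one neighbour, $z_2^S$ via $s_1$, and in $M(T^{-1}S)$ the vector $w_{k+1}$ has neighbours via $t_1^{-1}$ and via $s_1$. Hence $\iota_S$ is a homomorphism provided the $t_1$-side action on $w_{k+1}$ does not contradict anything in $M(S)$.
\end{itemize}

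\textbf{Step 3 (main obstacle): the junction.} This is where I expect the real work. The check is phrased via the rule ``$a\cdot z_j$'', which is a left action, and direct versus inverse letters at the joint must be tracked in both orientations. Because $T^{-1}S$ is a string, $t_1$ and $s_1$ are not inverse to each other, and no forbidden subpath of the relations $I$ occurs at the junction. This ensures that $w_{k+1}$ acquires two separate neighbours rather than a collision, so the required identities hold. I would first check the case where $t_1$ and $s_1$ are both direct arrows (a peak at $w_{k+1}$), and then argue that the remaining cases are analogous.

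\textbf{Step 4: the pushout isomorphism.} Both maps send their point to $w_{k+1}$, so they induce a homomorphism
$$\varphi:(M(S)\oplus M(T))/K(z_1^S,-z_1^T)\to M(T^{-1}S).$$
\begin{itemize}
\item $\varphi$ is surjective, since the images of $\iota_S$ and $\iota_T$ together span all basis vectors $w_1,\dots,w_{k+m+1}$.
\item The dimensions agree: $(m+1)+(k+1)-1=k+m+1$. Hence $\varphi$ is an isomorphism.
\item $\varphi$ sends $\chi_{M\ast N}$ to $w_{k+1}$, so it is a pointed isomorphism.
\end{itemize}
In particular, the pointed pushout is $\equiv$-equivalent, indeed isomorphic, to $(M(T^{-1}S), z^{(T^{-1},S)})$, which proves the lemma.
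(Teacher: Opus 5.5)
There is a genuine gap at the junction you postponed. It cannot be closed in the stated generality: both your argument and the lemma itself need the hypothesis that $S$ and $T$ both begin with direct letters.

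\textbf{The pushout is misidentified.} For $\theta=\Lambda$ the pointed pushout is $(M(S)\oplus M(T))/\Lambda(z_1^S,-z_1^T)$. This is the quotient by the \emph{cyclic submodule} generated by $(z_1^S,-z_1^T)$, not by the line $K(z_1^S,-z_1^T)$, which in general is not even a submodule. The two agree iff every arrow annihilates $z_1^S$ and $z_1^T$. By the multiplication rule for string modules, this happens iff $s_1$ and $t_1$ are direct letters. In that case $t_1w_k=w_{k+1}$ and $s_1w_{k+2}=w_{k+1}$, so $w_{k+1}$ is a sink killed by all arrows, not a peak. Then $\iota_S$ and $\iota_T$ are homomorphisms, the pushout has dimension $k+m+1$, and your Steps 2--4 form a complete proof.

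\textbf{The other cases are not analogous; the conclusion fails there.} Suppose $s_1=c^{-1}$ is inverse.
\begin{itemize}
\item Then $c\,w_{k+1}=w_{k+2}\neq 0$. On the other hand $c\,z_1^T=0$, because $t_1\neq s_1$ (this holds since $T^{-1}S$ is a string). So $\iota_T$ is not a homomorphism.
\item Worse, the pushout point is $\epsilon_N(z_1^T)$ and is therefore annihilated by $c$. Hence there is no pointed map at all from the pushout to $(M(T^{-1}S),w_{k+1})$, and the two pointed modules are not even $\equiv$-equivalent.
\end{itemize}
If instead $t_1=d^{-1}$ is inverse, the same thing happens with $d$: it kills $z_1^S$ but sends $w_{k+1}$ to $w_k$.

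So the obstruction is not a ``collision'' of neighbours. It is an arrow leaving $w_{k+1}$ through the other half of the string.

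You should state the missing hypothesis explicitly and use it in Step 4 to justify $\Lambda(z_1^S,-z_1^T)=K(z_1^S,-z_1^T)$. It does hold in the intended application, Theorem~\ref{IPDC}. There, words in $\Sigma(U,V)$ start with the common direct first letter of $U$ and $V$. Words in $\Sigma(U^{-1},V^{-1})$ start with the inverse of their common inverse last letter, which is again a direct letter.
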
 

We refer to \cite[Section~2]{Sch00} for a combinatorial description of pointed homomorphisms between string modules. 

\begin{theorem}\cite[5.7]{KaPa11}\label{IPDC}
Assume that $(U,V)$ and $(U^{-1}, V^{-1})$ are $Q$-generating pairs over the string algebra $\Lambda$, and let $S, T\in \Sigma(U,V), S', T'\in \Sigma(U^{-1}, V^{-1})$. Then $$((M(X), z_1^X)_{X\in L^T_S(U,V)}, (M(Y), z_1^Y)_{Y\in L^{T'}_{S'}(U^{-1},V^{-1})}$$ is an independent pair of dense chains of pointed modules in $\Lambda$-mod.    
\end{theorem}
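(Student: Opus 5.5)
The plan is to check the axioms of Definitions \ref{dense} and \ref{densepair} directly for the two families, relying on the combinatorics of canonical pointed string modules recalled above.

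\textbf{Step 1: the two families are dense chains.} Since $(U,V)$ is $Q$-generating, Theorem \ref{DCWE} shows that $L^T_S(U,V)$ is a $\mathcal{Q}$-chain for the order $<$ on $S(a)$, where $a$ is the common first direct arrow. Each $X\in L^T_S(U,V)$ starts with $a$, so Lemma \ref{MBGA} gives, for $X<X'$, a pointed homomorphism $(M(X),z_1^X)\to(M(X'),z_1^{X'})$; this is condition (2). Condition (1) holds because string modules are indecomposable and finite-dimensional, hence have local endomorphism rings, and $z_1^X\neq0$. For condition (3), different words give different strings, and $M(X)\cong M(X')$ forces $X'=X$ or $X'=X^{-1}$. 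The second option is impossible, because $X'$ begins with a direct arrow while $X^{-1}$ begins with the inverse of the last letter of $U$, which is an inverse arrow. Hence the modules are already non-isomorphic. The same argument applies verbatim to $L^{T'}_{S'}(U^{-1},V^{-1})$, using that $(U^{-1},V^{-1})$ is $Q$-generating.

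\textbf{Step 2: identify the pushouts.} For $X\in L^T_S(U,V)$ and $Y\in L^{T'}_{S'}(U^{-1},V^{-1})$, I would show that $Y^{-1}X$ is a string. Here $Y^{-1}$ is a word in $U,V$ ending in $U$. Since all of $U,V$ start with the same direct arrow and end with the same inverse arrow, and powers and concatenations of the bands are strings, $Y^{-1}X$ is again a concatenation of copies of $U$ and $V$. It is therefore a string, provided mixed products $UV$ and $VU$ are strings. This follows from the shared first and last letters together with the gentle/string condition at the junction vertex; I expect one has to check that the common boundary letters make $UV$ legal.

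By the pushout lemma \cite[3.2]{Pu08}, $(M(X),z_1^X)\ast(M(Y),z_1^Y)\equiv(M(Y^{-1}X),z^{(Y^{-1},X)})$. This module is a string module, so it is indecomposable with local endomorphism ring, which gives condition (1) of Definition \ref{densepair}.

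\textbf{Step 3 (main obstacle): non-isomorphism of the pushouts.} Suppose $M(Y^{-1}X)\cong M(Y^{-1}X')$ by a pointed isomorphism. Then $Y^{-1}X'=Y^{-1}X$, or $Y^{-1}X'=(Y^{-1}X)^{-1}=X^{-1}Y$. The isomorphism must also carry the distinguished basis vector at position $|Y|+1$ to the one at position $|Y|+1$.

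In the first case $X=X'$. In the reversed case, the isomorphism reverses the canonical basis. Position $|Y|+1$ would then be matched with position $|X|+1$ in the reversed word, so comparing the position of the point gives $|X|=|Y|$. Comparing the letters right after the point then gives $X'=Y^{-1}$, which is impossible: $X'$ ends with $U$, hence with an inverse arrow, while $Y^{-1}$ ends with the inverse of the first letter of $U^{-1}$... I would resolve this by comparing the first letters of the pieces: $X'$ begins with a direct arrow $a$, while $Y^{-1}$ read backwards from the point begins with an inverse arrow.

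The symmetric statement, for a fixed $X$ and $t\neq t'$, is handled identically. This is exactly the content of \cite[5.7]{KaPa11}, which I would follow. The delicate point is the bookkeeping of positions and orientations in the reversed case, so I would write out the letter adjacent to the distinguished vertex on each side.
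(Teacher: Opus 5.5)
Your overall plan is the natural one. It checks the axioms of Definitions \ref{dense} and \ref{densepair} directly, identifies the pushouts via \cite[3.2]{Pu08}, and uses $M(W)\cong M(W')$ iff $W'\in\{W,W^{-1}\}$. The paper gives no proof of its own and only cites \cite[5.7]{KaPa11}. However, both of your non-isomorphism arguments rest on an orientation slip, and the reversed case in Step 3 is not correctly argued.

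\textbf{The orientation slip (Steps 1 and 2).} Write $a$ for the common first letter and $b^{-1}$ for the common last letter of $U$ and $V$. Then $a\neq b$, because $b^{-1}a$ occurs in the string $U^2$.
\begin{enumerate}
\item Every $X\in L^T_S(U,V)$ begins with $a$ and ends with $U$, hence with $b^{-1}$. So $X^{-1}$ begins with $b$, which is a \emph{direct} arrow, not an inverse one.
\item Consequently, in Step 1(3), ``direct versus inverse'' distinguishes nothing. What separates $X'$ from $X^{-1}$ is $a\neq b$.
\item The same slip appears in Step 2. Writing $Y=S'ZT'U^{-1}$ with $Z\in\Sigma(U^{-1},V^{-1})$, the word $Y^{-1}=U\,T'^{-1}Z^{-1}S'^{-1}$ \emph{begins} with $U$; it does not end with it.
\item The string check you left open is immediate. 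Every junction in a concatenation of copies of $U,V$ is $b^{-1}a$, an inverse letter followed by a direct one. So no direct or inverse subpath crosses a junction, and the only new adjacency is one already present in $U^2$.
\end{enumerate}

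\textbf{The reversed case in Step 3.} This is where the argument actually breaks.
\begin{enumerate}
\item You assume an isomorphism $M(W)\to M(W^{-1})$ reverses the canonical basis, so the positions of the distinguished vectors must match. This is not justified. The standard isomorphism can be composed with any automorphism $1+\nu$ of $M(W)$, with $\nu$ a radical endomorphism (e.g.\ a graph map), and such automorphisms need not preserve the lines spanned by canonical basis vectors.
\item Even granting it, matching the letters after the point would give $X'=Y$, not $X'=Y^{-1}$.
\item Your final contradiction again uses the faulty direct/inverse comparison.
\end{enumerate}

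None of this bookkeeping is needed; the reversed case is impossible at the level of strings.
\begin{enumerate}
\item Since $Y^{-1}$ begins with $U$, the string $Y^{-1}X'$ begins with $a$.
\item Since $X$ ends with $U$, the string $(Y^{-1}X)^{-1}=X^{-1}Y$ begins with $b\neq a$.
\item Hence $M(Y^{-1}X)\cong M(Y^{-1}X')$ forces $Y^{-1}X=Y^{-1}X'$, and cancelling the common prefix gives $X=X'$.
\item Symmetrically, $M(Y^{-1}X)\cong M(Y'^{-1}X)$ forces $Y=Y'$, by cancelling the common suffix $X$. The reversed case is excluded by the same first-letter comparison.
\end{enumerate}
So the pushouts are non-isomorphic even as unpointed modules, and no tracking of the distinguished vector is required. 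This is exactly the role of the trailing $U$ and $U^{-1}$ in the definitions of the two chains. With these corrections your argument goes through.
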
 
The next theorem follows as a consequence of Theorem \ref{Sup}. 
\begin{theorem}\cite{Puni04}\label{supstr}
Let $\Lambda$ be a non-domestic gentle algebra. Then the width of the lattice of all pp-formulae over $\bar{\Lambda}$ is equal to infinity, and it has a superdecomposable module.    
\end{theorem}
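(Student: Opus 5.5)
The plan is to reduce the statement to the combinatorial input of Theorem \ref{IPDC} and then apply Ziegler's criterion in the form of Theorem \ref{Sup}. Note that the statement as written mentions $\bar{\Lambda}$, but for a gentle $\Lambda$ (with no skew-gentle structure fixed), I read it as asserting the conclusion for $\Lambda$ itself. The width claim is about the lattice $\mathcal{P}_\Lambda^\theta$ with $\theta=\Lambda$, i.e. pp-formulae in one free variable. A gentle algebra is a string algebra, so all the string-combinatorics recalled above (the order $<$ on $S(a)$, Lemma \ref{MBGA}, Theorem \ref{DCWE}, Theorem \ref{IPDC}) is available.

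\textbf{Step 1 (bands).} Since $\Lambda$ is non-domestic, it has infinitely many bands, while $Q$ is finite. By pigeonhole, infinitely many bands (after rotation and inversion) start with the same direct arrow $a$ and end with the same inverse arrow $b^{-1}$. From two such bands $B_1\neq B_2$ I would form $U,V$ as suitable products, for instance $U=B_1B_2$ and $V=B_2B_1$, or powers of them. The choice must guarantee three things: they are distinct, neither is a prolongation of the other, and they are comparable, which I would arrange as $U<V$ after possibly swapping. Thus $(U,V)$ is $Q$-generating.

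\textbf{Step 2 (inverse pair).} I also need $(U^{-1},V^{-1})$ to be $Q$-generating. The bands $U^{-1},V^{-1}$ start with $b$ and end with $a^{-1}$, so they again start with a direct arrow and end with an inverse arrow. Non-prolongation is symmetric under inversion, since $U^{-1}=V^{-1}X$ would make $V$ a terminal segment of $U$. This has to be excluded as well, so in Step 1 I should choose $U,V$ that differ at both ends. Taking $U=B_1^{k}B_2B_1^{k}$ and $V=B_2^{k}B_1B_2^{k}$ for large $k$ should do this. Comparability is automatic because $<$ is a linear order on $S(b)$.

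\textbf{Step 3 (conclusion).} Applying Theorem \ref{IPDC} with, for example, $S=T$ empty and $S'=T'$ empty gives an independent pair of dense chains of pointed modules. Theorem \ref{Unw} then shows that the generated sublattice is wide, so the width of $\mathcal{P}_\Lambda^\Lambda$, equivalently of the lattice of pp-formulae, is undefined (infinite). Theorem \ref{Sup}(1) then gives a superdecomposable pure-injective module, provided $K$ is countable. This is the standing hypothesis of Ziegler's criterion, and I would state it explicitly.

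\textbf{Main obstacle.} The hardest step is Step 1/2: extracting two bands with a common first direct arrow and last inverse arrow such that both the pair and its inverse are non-prolongations. I would first try Puninski's argument from \cite{Puni04}. Non-domesticity yields a vertex with two distinct primitive cycles of bands through it, and concatenating them produces the required configuration.
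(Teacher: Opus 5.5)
Your proposal is correct, and its skeleton is the one the paper has in mind. The paper gives no proof beyond citing \cite{Puni04} and Theorem \ref{Sup}, but the route is the same: produce $Q$-generating pairs $(U,V)$ and $(U^{-1},V^{-1})$, apply Theorem \ref{IPDC}, then Theorems \ref{Unw} and \ref{Sup}. Reading $\bar\Lambda$ as $\Lambda$ and adding countability of $K$ are both right, since Puninski's theorem is for countable fields.

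The difference is in how the bands are obtained. The paper, following Puninski, quotes Schr\"oer \cite[p.~41]{Sc97} for an arrow $\alpha$ with two different bands $U,V\in B(\alpha)$ avoiding the substring $\alpha\beta^{-1}$, and takes these as the $Q$-generating pair. You use only that non-domesticity yields two inequivalent bands $B_1,B_2$ through the same junction (pigeonhole), and then force non-prolongation by hand. This is more elementary and avoids Schr\"oer's structural result.

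Your ``should do this'' needs one line of justification. Distinct primitive words do not commute, so by Fine--Wilf the words $B_1B_1\cdots$ and $B_2B_2\cdots$ differ within their first $|B_1|+|B_2|$ letters, and likewise when read from the right. Hence for $k\min(|B_1|,|B_2|)\geq |B_1|+|B_2|$ neither of your $U,V$ is a prefix or a suffix of the other.

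In fact your first candidate $U=B_1B_2$, $V=B_2B_1$ already works at both ends. They are distinct (by non-commutation) and of equal length, so neither can prolong the other from either side. If you want them primitive, pass to their primitive roots, which are again distinct, of equal length, begin with $a$ and end with $b^{-1}$. Finally, a junction $b^{-1}a$ cannot lie inside a monomial relation, so every word over $\{U,V\}$ is a string, as Theorems \ref{DCWE} and \ref{IPDC} require.
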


\begin{definition}\label{nonsy}
An independent pair $((M_q, m_q)_{q\in L_1}, (N_t, n_t)_{t\in L_2})$ of dense chains of pointed modules over $\Lambda$ is non-symmetric if for any $q, q_1, q_2\in L_1$ and $t, t_1, t_2\in L_2$ the following hold:
\begin{enumerate}
\item ${}^gM_{q_1}\ncong M_{q_2}$;
\item ${}^gN_{t_1}\ncong N_{t_2}$;
\item ${}^gP_{q_1t_1}\ncong P_{q_2t_2}$ where, $(P_{qt}, p_{qt}):= (M_q, mq)\ast (N_t, n_t)$.
\end{enumerate}
\end{definition}

The next result ensures the existence of an independent pair of dense chains of pointed modules over the skew group algebra when the given algebra has a non-symmetric one.
\begin{theorem}\label{NSIDPM}
Suppose $\mathrm{char}(K)\neq 2$ and $F_\lambda: \mathrm{mod}\mbox{-}\Lambda \to \mathrm{mod}\mbox{-}\bar\Lambda$ is a Galois semi-covering and the module $\theta$ is semisimple. 
\begin{enumerate}
\item If $((M_q, \chi_{M_q})_{q\in L_1}, (N_t, \chi_{N_t})_{t\in L_2})$ is a non-symmetric pair of dense chains of pointed modules over $\Lambda$, then $((F_\lambda(M_q), \chi_{F_\lambda(M_q)})_{q\in L_1}; (F_\lambda (N_t); \chi_{F_\lambda(N_t)})_{t\in L_2})$ is an independent pair of dense chains of pointed modules over $\bar{\Lambda}$.

\item If $\overline{(M_p; \chi_{M_p})}_{p\in L_1}$ is a wide poset of $\theta$-pointed modules in $P_\Lambda^\theta$, then $\overline{(F_\lambda(M_p); \chi_{F_\lambda(M_p))}}_{p\in L_1}$ is a wide poset of $\tilde{\theta}$-pointed modules in $P^{\tilde{\theta}}_{\bar{\Lambda}}$.
\end{enumerate}
\end{theorem}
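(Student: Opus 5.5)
The plan is to check, one by one, the axioms of Definitions \ref{dense} and \ref{densepair} for the image family under $F_\lambda$, and then to use Proposition \ref{pm} to carry over the order-theoretic data. The transfer relies on three facts established earlier: $F_\lambda$ is exact and faithful (Remark \ref{faex}); it preserves pointed pushouts up to $\equiv$ and reflects and preserves the order $\leq$ (Proposition \ref{pm}); and it carries indecomposables with non-trivial stabilizer to indecomposables (Remark \ref{indsk}). For part (1) the order of steps is as follows.

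\textbf{Step 1 (indecomposability).} Each $M_q$ is indecomposable with $End_\Lambda(M_q)$ local, and non-symmetry (1), taken with $q_1=q_2=q$, says ${}^gM_q\ncong M_q$. Hence $G_{M_q}\neq G$, and Remark \ref{indsk} gives that $F_\lambda(M_q)$ is indecomposable. Since the module is finite-dimensional, its endomorphism ring is local. The same argument applies to $N_t$ and, via non-symmetry (3), to $P_{qt}=M_q\ast N_t$. Proposition \ref{pm}(1) gives $F_\lambda(M_q)\ast F_\lambda(N_t)\equiv F_\lambda(P_{qt})$. I will check directly that exactness gives an actual isomorphism: $F_\lambda$ preserves cokernels and direct sums, so it preserves the pushout $M\oplus N/\theta$. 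Consequently $End(F_\lambda(M_q)\ast F_\lambda(N_t))$ is local. Also $\chi_{F_\lambda(M_q)}=F_\lambda(\chi_{M_q})\neq 0$ by faithfulness.

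\textbf{Step 2 (chain maps).} If $q<q'$, the pointed map $\mu_{q,q'}$ is sent by $F_\lambda$ to a pointed map, so condition (2) of Definition \ref{dense} holds.

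\textbf{Step 3 (non-isomorphism).} This is the main obstacle. Suppose $F_\lambda(M_q)\cong F_\lambda(M_{q'})$ as pointed modules. By Theorem \ref{Skmor} (case $G_M\neq G$), $\mathrm{Hom}(F_\lambda M_q,F_\lambda M_{q'})\cong\bigoplus_g\mathrm{Hom}({}^gM_q,M_{q'})$. An isomorphism between these indecomposables must have an invertible component, because a sum of radical maps between indecomposables lies in the radical. If that component is at $g\neq e$, then ${}^gM_q\cong M_{q'}$, which contradicts non-symmetry (1). So the component is at $g=e$, giving $M_q\cong M_{q'}$. To obtain a \emph{pointed} isomorphism, I will use that $\theta$ is semisimple and $\mathrm{char}K\neq2$: the $e$-component of the isomorphism is then a $\Lambda$-map compatible with the points, because $F_\lambda(\chi)$ restricted to the $e$-summand recovers $\chi$. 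This would make $(M_q,\chi_{M_q})\cong(M_{q'},\chi_{M_{q'}})$, contradicting Definition \ref{dense}(3). I expect this projection-onto-components argument to be the delicate point. I would attempt it by averaging over $G$, which is possible since $|G|=2$ is invertible, and by using that for semisimple $\theta$ the image point is determined componentwise. The same reasoning, applied to the $P_{qt}$ using non-symmetry (3), yields condition (2) of Definition \ref{densepair}.

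For part (2), take $\overline{(M_p)}<\overline{(M_q)}$ in the image family. By Proposition \ref{pm}(2), which gives reflection of $\leq$, the preimages satisfy the same strict inequality; strictness is preserved because $\leq$ is reflected in both directions. Wideness in $\mathcal{P}_\Lambda^\theta$ supplies incomparable $M,N$ lying between them, with $M\ast N$ and $M\oplus N$ placed strictly as required. Their images remain incomparable and strictly ordered, again by the "if and only if" in Proposition \ref{pm}(2). Moreover $F_\lambda$ commutes with $\oplus$ (additivity) and with $\ast$ up to $\equiv$ (Proposition \ref{pm}(1)), so the image family satisfies the wideness conditions with $\tilde\theta=F_\lambda(\theta)$.
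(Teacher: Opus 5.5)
Your proposal is correct, and it verifies the non-isomorphism conditions by a different route from the paper.

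\textbf{Comparison with the paper.} The paper argues only with underlying modules. By Remark \ref{indsk}, $F_\lambda$ determines an indecomposable with non-trivial orbit up to its $G$-orbit. So non-symmetry gives $F_\lambda(M_q)\not\cong F_\lambda(M_{q'})$, and the paper takes $F_\lambda(M_q\ast N_t)\cong F_\lambda(M_q\ast N_{t'})$ to force $M_q\ast N_t\cong{}^g(M_q\ast N_{t'})$. That is quicker, but it drops the alternative $M_q\ast N_t\cong M_q\ast N_{t'}$ as plain modules. Definition \ref{densepair}(2) does not exclude that case, since it only forbids $\theta$-isomorphism.

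Your argument splits an isomorphism via Theorem \ref{Skmor}, uses non-symmetry to rule out the $g$-component, and shows the $e$-component is a pointed isomorphism. It therefore reflects $\theta$-isomorphisms themselves, covers that case, and works under either reading of Definition \ref{dense}(3). Your Step 1 remark that exactness gives a genuine pointed isomorphism $F_\lambda(M\ast N)\cong F_\lambda(M)\ast F_\lambda(N)$ is also what the paper silently uses, since Proposition \ref{pm}(1) only states $\equiv$. Finally, the paper gives no argument at all for part (2), and yours supplies one.

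\textbf{The step you flag as delicate.} It is formal and needs neither averaging nor semisimplicity of $\theta$. Let $\eta_X\colon X\to F_\lambda(X)|_\Lambda\cong X\oplus{}^gX$ be the inclusion of the $e$-summand, i.e.\ the unit of induction--restriction. For $\phi\colon F_\lambda M\to F_\lambda N$, write $\phi\eta_M=(a,b)$ with $a\colon M\to N$ and $b\colon M\to{}^gN$. If $\phi F_\lambda(\chi_M)=F_\lambda(\chi_N)$, naturality of $\eta$ gives
\[ (a\chi_M,\ b\chi_M)=\phi\,\eta_M\chi_M=\phi\,F_\lambda(\chi_M)\,\eta_\theta=F_\lambda(\chi_N)\,\eta_\theta=\eta_N\chi_N=(\chi_N,0), \]
so $a$ is automatically pointed.

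\textbf{Making the radical step precise.} Your claim that ``a sum of radical maps is radical'' should be justified by restricting $\phi$ to $\Lambda$. It then becomes a $2\times 2$ matrix $M\oplus{}^gM\to N\oplus{}^gN$ whose entries are $a$, $b$ and their ${}^g$-twists. If ${}^gM\not\cong N$ and $a$ is not invertible, every entry is a radical map of $\mathrm{mod}\mbox{-}\Lambda$, so $\phi$ cannot be invertible.

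\textbf{A point about part (2).} The same computation proves the reverse implication of Proposition \ref{pm}(2), on which your part (2) rests. You should cite it that way rather than rely on faithfulness, as the paper does. Faithfulness only reflects triangles whose maps already lie in the image of $F_\lambda$, and $F_\lambda$ is not full.
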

\begin{proof}
We first show that, $(F_\lambda(M_q), \tilde{m_q})_{q\in L_1}$ is a dense chain of $\tilde{\theta}$-pointed $\bar\Lambda$-modules (similar arguments show that $(F_\lambda (N_t); \tilde{n_t})_{t\in L_2})$ is a dense chain of $\tilde{\theta}$-pointed $\bar\Lambda$-modules as well). Let $q\in L_1$. Clearly, Remark \ref{indsk} says that the module $F_\lambda(M_q)$ is indecomposable by construction. Since $F_\lambda$ is faithful by Remark \ref{faex}, we get $\chi_{F_\lambda(M_q)}\neq 0$ by Theorem \ref{Skmor}.

\vspace{0.05in}
Let $q < q'$ and $\mu_{q,q'}: (M_q, \chi_{M_q})\to (M_{q'}, \chi_{M_{q'}})$ is a $\theta$-pointed homomorphism. We get a $\tilde{\theta}$-pointed homomorphism $F_\lambda(\mu_{q,q'}): (F_\lambda(M_q); \chi_{F_\lambda(M_q)})\to (F_\lambda(M_{q'}); \chi_{F_\lambda(M_{q'})})$ by Proposition \ref{pm}. Furthermore, $\tilde{\theta}$-pointed modules $(F_\lambda(M_q); \chi_{F_\lambda(M_q)})$ and $(F_\lambda(M_{q'}); \chi_{F_\lambda(M_{q'})})$ are not isomorphic for any $q\neq q'$ because $M_q \not\approx {}^gM_{q'}$ and thus we have $F_\lambda(M_q)\not\approx F_\lambda(M_{q'})$ by Remark \ref{indsk}. Hence $(F_\lambda(M_q); \chi_{F_\lambda(M_q)})_{q\in L_1}$ is a dense chain of $\tilde{\theta}$-pointed $\bar\Lambda$-modules.

\vspace{0.05in}
Now, we show that the pair $((F_\lambda(M_q), \tilde{m_q})_{q\in L_1}; (F_\lambda (N_t); \tilde{n_t})_{t\in L_2})$ forms an independent pair. Proposition \ref{pm} states that $F_\lambda((M; \chi_M) \ast (N; \chi_N))\approx F_\lambda(M; \chi_M)\ast F_\lambda(N; \chi_N)$. Moreover, since $((M_q, m_q)_{q\in L_1}, (N_t, n_t)_{t\in L_2})$ is a non-symmetric pair, $F_\lambda((M; \chi_M) \ast (N; \chi_N))$ is indecomposable by Remark \ref{indsk}. Hence, $F_\lambda(M; \chi_M)\ast F_\lambda(N; \chi_N)$ is indecomposable.

\vspace{0.05in}
Assume that $F_\lambda(M_q, \chi_{M_q})\ast F_\lambda(N_t, \chi_{N_t})$ is isomorphic with $F_\lambda(M_q, \chi_{M_q})\ast F_\lambda(N_{t'}, \chi_{N_{t'}})$, for some $q \in L_1$ and $t\neq t' \in L_2$. Then we get $F_\lambda(M_q\ast N_t)\approx F_\lambda(M_q)\ast F_\lambda(N_t)\approx F_\lambda(M_q)\ast F_\lambda(N_{t'})\approx F_\lambda(M_q \ast N_{t'})$, which yields $M_q \ast N_t \approx {}^g(M_q \ast N_{t'})$. This is a contradiction to the fact that $((M_q, m_q)_{q\in L_1}, (N_t, n_t)_{t\in L_2})$ is a non-symmetric pair. Hence $F_\lambda(M_q, \chi_{M_q})\ast F_\lambda(N_t, \chi_{N_t})$ is not isomorphic with $F_\lambda(M_q, \chi_{M_q})\ast F_\lambda(N_{t'}, \chi_{N_{t'}})$. Similarly, it is not isomorphic with $F_\lambda(M_{q'}, \chi_{M_{q'}})\ast F_\lambda(N_t, \chi_{N_t})$ as well, for any $q'\neq q \in L_1$. Hence, the result follows.    
\end{proof}

Let $\alpha$ and $\beta$ be two different arrows ending in the same vertex over a string algebra $\Lambda$. Then $\beta$ is uniquely determined by $\alpha$. Denote by $B(\alpha)$ the set of all bands with first letter $\alpha$. Then the last letter $\beta^{-1}$ is the same for all bands in $B(\alpha)$. If $\Lambda$ is not domestic, by \cite[p.~41]{Sc97} there is a $\alpha$ for which there are two different bands $U, V\in B(\alpha)$, such that $U$ and $V$ contain no substring of the form $\alpha \beta^{-1}$. From the gentle pair for a given skew-gentle algebra, it is clear that if the group action exchange $\alpha$ and $\beta$ then $(U,V)$ and $(U^{-1}, V^{-1})$ are $Q$-generating pairs over $\Lambda$, and hence $((M(X), z_1^X)_{X\in L^T_S(U,V)}, (M(Y), z_1^Y)_{Y\in L^{T'}_{S'}(U^{-1},V^{-1})}$ forms a non-symmetric independent pair of dense chains of pointed modules over $\Lambda$. Finally, using the Galois semi-covering functor, we get the following result by Theorem \ref{NSIDPM}.

\begin{theorem}\label{SDSG}
Suppose $\bar{\Lambda}$ is a skew-gentle algebra and $\Lambda$ is the associated gentle algebra. If $\Lambda$ is non-domestic, then $\bar{\Lambda}$ has a superdecomposable module. Moreover, $KG(\bar\Lambda)$ is infinite.  
\end{theorem}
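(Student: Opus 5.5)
The plan is to chain the results already stated: build a non-symmetric independent pair of dense chains of pointed modules over the associated gentle algebra $\Lambda$, push it along the Galois semi-covering $F_\lambda:\mathrm{mod}\mbox{-}\Lambda\to\mathrm{mod}\mbox{-}\bar\Lambda$ with Theorem \ref{NSIDPM}, and then conclude with Theorem \ref{Unw} and Theorem \ref{Sup}. Throughout, $\mathrm{char}(K)\neq 2$ and $G=\mathbb{Z}_2=\langle g\rangle$ acts as described in the paragraph on the group action over a gentle pair. With this action, $\Lambda\mathbb{Z}_2$ is Morita equivalent to $\bar\Lambda$. Since Morita equivalence preserves the lattices of pointed modules and pure-injectivity, it suffices to work with $\Lambda\mathbb{Z}_2$. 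We take $\theta=\Lambda$ (or $\theta=\Lambda e$ for a primitive idempotent $e$ if semisimplicity is needed), so that the pointed modules are the canonical pointed string modules $(M(S),z_1^S)$.

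\textbf{Step 1 (the pair over $\Lambda$).} Since $\Lambda$ is non-domestic, \cite[p.~41]{Sc97} provides an arrow $\alpha$ and two distinct bands $U,V\in B(\alpha)$ containing no substring $\alpha\beta^{-1}$. Comparing them in the order on $S(\alpha)$, we may assume $U<V$ and that neither is a prolongation of the other. The same then holds for $(U^{-1},V^{-1})$, so both pairs are $Q$-generating. Theorem \ref{DCWE} and Theorem \ref{IPDC} then give the independent pair
\[
\bigl((M(X),z_1^X)_{X\in L^T_S(U,V)},\,(M(Y),z_1^Y)_{Y\in L^{T'}_{S'}(U^{-1},V^{-1})}\bigr).
\]
Here the pushouts are the string modules $M(Y^{-1}X)$, by \cite[3.2]{Pu08}.

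\textbf{Step 2 (non-symmetry; the main obstacle).} We must check the three conditions of Definition \ref{nonsy}. The action $g$ sends a string module $M(W)$ to $M(gW)$, where $gW$ swaps $\gamma^+\leftrightarrow\gamma^-$ letterwise. So ${}^gM(X_1)\cong M(X_2)$ forces $gX_1=X_2$ or $gX_1=X_2^{-1}$. Every $X\in L^T_S(U,V)$ starts with $\alpha$, and every $Y\in L^{T'}_{S'}(U^{-1},V^{-1})$ starts with $\beta$ as a direct letter. We choose $\alpha$ with $g\alpha\neq\alpha$; this is possible because $g$ fixes no arrow. Then $gX_1$ starts with $g\alpha$, which is different from $\alpha$, so $gX_1\neq X_2$. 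For the other case, note that $X_2^{-1}$ ends in $\alpha^{-1}$ while $gX_1$ ends in $(g\beta)^{-1}$, since $X_1$ ends with $U$. The delicate point is to exclude $g\beta=\alpha$ and $g\alpha=\beta$; if this fails, we replace $(U,V)$ by bands through a different pair of arrows. I expect this combinatorial verification, including the same check for the pushouts $Y^{-1}X$, to be where the real work lies. The first thing I would try is to choose $S,T,S',T'$ so that the words in the family are asymmetric enough that no $g$-twist of one can equal another or its inverse.

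\textbf{Step 3 (transfer and conclusion).} Theorem \ref{NSIDPM}(1) now makes the images under $F_\lambda$ an independent pair of dense chains of pointed $\bar\Lambda$-modules. By Theorem \ref{Unw}, the lattice $\mathcal{P}^{\tilde\theta}_{\bar\Lambda}$ then has undefined width. Finally, Theorem \ref{Sup}(1), applied over a countable field (or via Theorem \ref{Sup}(2) with the $\mathcal{Q}$-chains), yields a superdecomposable pure-injective $\bar\Lambda$-module and $\mathrm{KG}(\bar\Lambda)=\infty$.
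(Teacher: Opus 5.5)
Your architecture matches the paper's: Schr\"oer's bands $U,V\in B(\alpha)$, then Theorems \ref{DCWE} and \ref{IPDC} for an independent pair over $\Lambda$, then non-symmetry, then Theorem \ref{NSIDPM}, and finally Theorems \ref{Unw} and \ref{Sup}. The gap is Step 2. It is the only step that is not a citation, Theorem \ref{NSIDPM} cannot be applied without it, and you leave it as a plan. Your reduction is right, and when $g\alpha\neq\beta$ it finishes at once:
\begin{itemize}
\item members $X$ begin with $\alpha$ and end with $\beta^{-1}$;
\item members $Y$ begin with $\beta$ and end with $\alpha^{-1}$;
\item pushouts $Y^{-1}X$ begin with $\alpha$ and end with $\beta^{-1}$.
\end{itemize}
So every twisted string ${}^gW$ or $({}^gW)^{-1}$ begins with $g\alpha$ or $g\beta$, and $\{g\alpha,g\beta\}\cap\{\alpha,\beta\}=\emptyset$. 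This gives all three conditions of Definition \ref{nonsy}, including $q_1=q_2$. That case is what keeps each $F_\lambda M(X)$ indecomposable by Remark \ref{indsk}.

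The case $g\alpha=\beta$ is where the real work lies; it is also the case the paper's sketch explicitly has in mind. Your fallback of switching to another pair of arrows is not available in general. In the Nazarova--Zavadskij gentle algebra $\bar\Gamma$ of Section 5, every pair of distinct arrows with a common source is a $g$-orbit. With the paper's convention $t(\alpha_i)=s(\alpha_{i+1})$, these pairs are the only possible first/last arrows of a band. Indeed, the bands used there satisfy ${}^gU=U^{-1}$ and ${}^gV=V^{-1}$. In that situation $g$ inverts each band factor without reversing their order, so non-symmetry depends on the anchors. For example, with $S=U$ and $T$ empty, the member $X=UVU$ satisfies ${}^gX=X^{-1}$. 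Hence ${}^gM(X)\cong M(X)$ and $F_\lambda M(X)$ splits.

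The missing idea is the anchoring of Theorem \ref{NSDP}:
\begin{itemize}
\item take $X=VZVU$ and $Y=V^{-1}Z'U^{-2}$, so that $Y^{-1}X=U^{2}{Z'}^{-1}V^{2}ZVU$;
\item use that words in $\{U,V\}$ (resp.\ $\{U^{-1},V^{-1}\}$) factor uniquely, because neither band is a prolongation of the other.
\end{itemize}
When ${}^gU=U^{-1}$ and ${}^gV=V^{-1}$, comparing the first one or two factors separates everything:
\begin{itemize}
\item ${}^g(X_1^{-1})$ begins with $U$, while $X_2$ begins with $V$;
\item ${}^g(Y_1^{-1})$ begins with $U^{-1}$, while $Y_2$ begins with $V^{-1}$;
\item ${}^g(P_1^{-1})$ begins with $UV$, while $P_2$ begins with $U^{2}$;
\item ${}^gW_1\neq W_2$ is immediate from first letters, since $g$ swaps $\alpha$ and $\beta$.
\end{itemize}
The anchors must be matched to how $g$ acts on $U$ and $V$. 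If instead ${}^gU=V^{-1}$, the member $VUVU$ of the same anchored family satisfies ${}^gX=X^{-1}$. So this step must be proved, not just attempted.

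(Minor: $\Lambda e$ is not semisimple in general, so your parenthetical does not secure that hypothesis of Theorem \ref{NSIDPM}. The paper applies the theorem to the same $\Lambda$-pointed string modules, and its proof never uses semisimplicity.)
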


M. Prest shows in \cite{Pr13} that the lattice of pp formulas is isomorphic to the lattice of pointed modules using the notion of a factorisable system of morphisms in $\mathrm{mod}\mbox{-}\Lambda$, i.e., a collection of pointed finitely presented modules $(M_q, m_q)_{q\in Q}$ indexed by the rationals such
that $(M_q, m_q)> (M_p, m_p)$ iff $q< p$.

\begin{corollary}\cite[4.4]{Pr13}
If there is no factorisable system of pointed modules in $\mathrm{mod}\mbox{-}\Lambda$ then there is no superdecomposable pure-injective $\Lambda$-module.
\end{corollary}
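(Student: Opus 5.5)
The plan is to prove the contrapositive: assuming a super-decomposable pure-injective $\Lambda$-module $N$ exists, I will build a factorisable system of pointed modules. The argument has three steps: Ziegler's width criterion (the implication in the opposite direction to Theorem \ref{Sup}(1)), then a purely order-theoretic extraction of a $\mathcal{Q}$-chain, then a translation of pp-formulas into pointed modules via free realisations.

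\emph{Step 1 (Ziegler).} I will invoke Ziegler's theorem from \cite{Zieg84}: if the lattice of pp-formulae of $\Lambda$ has defined (ordinal-valued) width, then every pure-injective module has an indecomposable direct summand. This holds for an arbitrary ring, with no countability hypothesis. The idea behind it is that a pure-injective module is the hull of pp-types. Defined width lets one choose a nonzero element whose type realises a ``minimal'' pp-pair, and the hull of such a type is indecomposable. Since $N$ has no indecomposable summands, the width is undefined. So for some $t$, the lattice of pp-formulae in $t$ free variables contains a wide sublattice $\mathcal{L}$.

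\emph{Step 2 (dense chain).} Wideness gives interpolation: for any $a<b$ in $\mathcal{L}$ there is $c\in\mathcal{L}$ with $a<c<b$. For instance, take $c = M\ast N$, using $a<M\ast N<M\oplus N<b$ from the definition. In particular, $\mathcal{L}$ contains a pair $a<b$ and elements strictly between them. I will enumerate $\mathcal{Q}$ as $(r_n)_{n\ge 0}$ and define $\varphi_{r_n}$ inductively. At each stage, $r_n$ lies in an open interval determined by the finitely many rationals already treated, and I choose $\varphi_{r_n}$ strictly between the corresponding formulas, or strictly between $a$ and $b$ at the ends. This produces an order-embedding $q\mapsto \varphi_q$ of $\mathcal{Q}$ into the pp-lattice, reversing the order as needed.

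\emph{Step 3 (free realisations).} By Prest's correspondence, which holds for $\theta=\Lambda^t$ as recalled before Definition \ref{dense}, every pp-formula $\varphi$ has a free realisation $(M_\varphi,m_\varphi)$ with $M_\varphi$ finitely presented. Moreover, $\varphi\le\psi$ holds if and only if there is a pointed homomorphism $(M_\psi,m_\psi)\to(M_\varphi,m_\varphi)$, and this correspondence is an anti-isomorphism between the pp-lattice and $\mathcal{P}_\Lambda^{\Lambda^t}$. Setting $(M_q,m_q):=(M_{\varphi_q},m_{\varphi_q})$ then gives a $\mathcal{Q}$-indexed family with $(M_q,m_q)>(M_p,m_p)$ exactly when $q<p$, after adjusting orientation in Step 2. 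This is a factorisable system, contradicting the hypothesis.

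The main obstacle is Step 1. It rests on Ziegler's structure theory of indecomposable pure-injectives and pp-types, which I will cite rather than reprove. I will check that it is stated for arbitrary rings, since Theorem \ref{Sup} as quoted assumes countability only for the opposite implication. The remaining steps are bookkeeping. The one point to watch is orientation: the convention ``$>$ iff $q<p$'' must match the anti-isomorphism of Step 3. If it does not, I will precompose with $q\mapsto -q$.
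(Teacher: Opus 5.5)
Your argument is correct and is essentially the reasoning behind the cited result. The paper gives no proof of its own beyond the citation to Prest and the remark that pp-formulae correspond to pointed modules. You use three ingredients: Ziegler's theorem that a superdecomposable pure-injective forces undefined width holds over an arbitrary ring, a wide lattice contains an order-copy of $\mathcal{Q}$ by your interpolation argument, and free realisations carry this chain to a $\mathcal{Q}$-indexed chain of pointed modules.

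Two cosmetic points. First, with the paper's convention that $\overline{(M;\chi_M)}\le\overline{(N;\chi_N)}$ iff there is a pointed map $(N;\chi_N)\to(M;\chi_M)$, the assignment $\varphi\mapsto(M_\varphi,m_\varphi)$ is order-preserving, not an anti-isomorphism: it sends $\varphi+\psi$ to the pointed direct sum and $\varphi\wedge\psi$ to the pointed pushout. Your $q\mapsto -q$ hedge already absorbs this. Second, Ziegler's criterion is usually stated for pp-formulae in one free variable, so you may take $t=1$; the $(M_q,m_q)$ are then pointed by single elements, as in the paper's description.
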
 

If $\bar{\Lambda}$ has a super-decomposable module, then the above corollary ensures the existence of a factorisable system of pointed modules in $\mathrm{mod}\mbox{-}\bar{\Lambda}$. Since $\Lambda$ is morita equivalent to the skew-group algebra $\bar{\Lambda}\hat{G}$ where $\hat{G}$ is the dual group of $G$, consider $G_\lambda:\mathrm{mod}\mbox{-}\bar{\Lambda}\to \mathrm{mod}{-}\Lambda$ \cite[Remark~3.4]{GoSaTr25} to get a factorisable system of pointed modules in $\mathrm{mod}\mbox{-}\Lambda$, which ensures the existence of a recursive system \cite[Section~4.1]{GKS} which leads to the existence of a morphism in the stable radical \cite[Lemma~4.1.5]{GKS}, i.e., the nilpotent index of the radical does not exist. Therefore, $\Lambda$ is non-domestic, as for any domestic string algebra, Schr\"{o}er \cite{Sch00} computed that $\mathrm{rad}^{\omega(n+2)}(\Lambda)= 0$, where $n$ is the maximal length of a path in its bridge quiver. 

\begin{theorem}
Assume that $\bar{\Lambda}$ is a skew gentle algebra. 
\begin{enumerate}
\item $\bar{\Lambda}$ is of a non-domestic type if and only if the lattice $P^{\tilde{\theta}}_{\bar{\Lambda}}$ is wide;
\item If the field k is countable, then $\bar{\Lambda}$ is of non-domestic type if and only if there exists a super-decomposable pure-injective $\bar{\Lambda}$-module. 
\end{enumerate}
\end{theorem}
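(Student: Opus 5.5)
The plan is to prove the two equivalences by combining the forward direction already established in Theorem \ref{SDSG} with the converse argument sketched in the paragraph preceding the statement. The first step is to make the dichotomy precise. By definition $\bar{\Lambda}$ is of non-domestic type exactly when its associated gentle algebra $\Lambda$ is non-domestic. This identification is justified because, by Remark \ref{indsk}, the push-down $F_\lambda$ matches one-parameter families of band modules over $\Lambda$ with those over $\bar\Lambda$ up to a factor of at most $|G|=2$. Hence $\mu_{\bar\Lambda}(d)$ is bounded if and only if $\mu_\Lambda(d)$ is bounded, and so non-domesticity of $\bar\Lambda$ is equivalent to $\Lambda$ having infinitely many bands.

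For the implication ``non-domestic $\Rightarrow$ wide'' in (1), the steps are as follows.
\begin{enumerate}
\item Take the arrow $\alpha$ and the two bands $U,V\in B(\alpha)$ supplied by \cite[p.~41]{Sc97}.
\item Form the $Q$-generating pairs $(U,V)$ and $(U^{-1},V^{-1})$, and apply Theorem \ref{IPDC} to obtain an independent pair of dense chains of canonical pointed string modules over $\Lambda$.
\item Check non-symmetry (Definition \ref{nonsy}). The action of $g$ sends the string module $M(X)$ to $M(gX)$, and $gX$ begins with $g\alpha\neq\alpha$ when $g$ swaps arrows; it coincides with $X^{\pm1}$ only in degenerate situations, which can be excluded by choosing $U,V$ not $g$-stable. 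The pushouts $M(X^{-1}Y)$ are again string modules, so the same argument applies to them.
\item Apply Theorem \ref{NSIDPM}(1) to obtain an independent pair over $\bar\Lambda$, and then Theorem \ref{Unw} to conclude that $P^{\tilde\theta}_{\bar\Lambda}$ has undefined width.
\end{enumerate}
For (2), the forward direction then follows from Theorem \ref{Sup}(1) once $K$ is countable, so that $\bar\Lambda$ is a countable ring.

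For the converse, it suffices to show that either a wide lattice or a superdecomposable pure-injective forces non-domesticity. A wide sublattice produces a factorisable system of pointed modules, indexed by a $\mathcal{Q}$-chain obtained by iterating the wideness condition. In case (2), the existence of a superdecomposable module yields a factorisable system via the contrapositive of \cite[4.4]{Pr13}. Next, transport this system to $\mathrm{mod}\mbox{-}\Lambda$ using $G_\lambda$, exploiting the Morita equivalence $\Lambda\simeq\bar\Lambda\hat G$ together with the analogues of Remark \ref{faex} and Proposition \ref{pm} for $G_\lambda$; this guarantees that strict inequalities of pointed modules are reflected. A factorisable system over $\Lambda$ gives a recursive system as in \cite[Section~4.1]{GKS}, and hence a nonzero morphism in $\mathrm{rad}^\infty$, by \cite[Lemma~4.1.5]{GKS}. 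Since Schröer's bound $\mathrm{rad}^{\omega(n+2)}=0$ holds for domestic string algebras, $\Lambda$, and therefore $\bar\Lambda$, must be non-domestic.

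I expect the main obstacle to be the converse transport step. Because $G_\lambda$, like $F_\lambda$, is faithful and exact but not full, only the implication ``$\le$ reflected'' of Proposition \ref{pm}(2) is clear. To keep the chain strictly decreasing after applying $G_\lambda$, I would first try to use that $F_\lambda G_\lambda X\cong X\oplus {}^{\hat g}X$. Under this isomorphism, a pointed map witnessing $G_\lambda(M_p)\le G_\lambda(M_q)$ pushes forward to a pointed map between $M_p\oplus{}^{\hat g}M_p$ and $M_q\oplus{}^{\hat g}M_q$. Projecting onto a summand would then contradict strictness, provided the pointing is compatible with the $\hat G$-action, which is the case when the base point $\tilde\theta$ is chosen $\hat G$-invariant.
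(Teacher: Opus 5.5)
Your proposal follows the paper's own argument. The forward implications are Theorem~\ref{SDSG}: Schr\"oer's bands $U,V\in B(\alpha)$ give a non-symmetric pair, which is pushed down by Theorem~\ref{NSIDPM} and then fed into Theorems~\ref{Unw} and~\ref{Sup}. The converses follow the paper's chain from Prest's Corollary~4.4, through transport along $G_\lambda$, recursive systems and a nonzero stable radical, to Schr\"oer's bound $\mathrm{rad}^{\omega(n+2)}=0$ for domestic string algebras.

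Your additions make explicit what the paper leaves implicit. Your strictness fix needs no invariance hypothesis on $\tilde\theta$: by naturality of $F_\lambda G_\lambda\cong\bigoplus_{\hat g\in\hat G}{}^{\hat g}(-)$ the transported pointing is block-diagonal, so the trivial-character diagonal component of the pushed-forward map is already a pointed map between the original pointed modules. In the non-symmetry check, however, $g\alpha=\beta$ is forced whenever $t(\alpha)$ is a special vertex, so this case is not degenerate. There the symmetry has to be broken by the choice of the flanking words $S,T$ (comparing leading blocks, as in the proof of Theorem~\ref{NSDP}), not by requiring $U,V$ to be non-$g$-stable; the paper's own examples have ${}^gU=U^{-1}$ and ${}^gV=V^{-1}$.
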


The following remark deals with Prest's conjecture for skew-gentle algebras.
\begin{rmk}
Skew-gentle algebra is also tame, and we check that it possesses super decomposable modules. Thus it also refutes Prest's first conjecture, but the second conjecture – that an algebra $\Lambda$ is of domestic representation type if and only if there is no super-decomposable pure-injective module – remains valid for skew-gentle algebras.
\end{rmk}

\section{Jacobian algebras associated with closed punctured oriented surfaces:}
In this section, we consider the Jacobian algebras associated with closed oriented surfaces with a non-empty finite collection of punctures, excluding only the case of a sphere with four (or fewer) punctures, and show that these algebras are finite-dimensional, tame algebras, where the algebras associated with a sphere with five punctures are associated with a quotient of skew-gentle algebra, whereas in the rest of the cases these are presented as the quotient of non-domestic string algebras (for details see \cite{GLS16}, \cite{La12}, \cite{Y16}). A non-domestic string algebra has a superdecomposable module by Theorem \ref{supstr}. Finally, we explicitly construct a superdecomposable module in the skew gentle algebra case, which concludes that the Jacobian algebras have superdecomposable modules. 

Y. Valdivieso-D\'{i}az shows in \cite{Y16} that the above-mentioned Jacobian algebras are exponential. The following proposition follows from the proof of the main result, where she shows the existence of a three-cycle and a finite-length cycle which commute in the underlying quiver of any triangulation of a sphere, excluding the case of five (or fewer) punctures. 

\begin{proposition}\label{NDS}
For a sphere excluding the case of five (or fewer) punctures, there exists a triangulation $\mathbb{T}$ such that the Jacobian algebra $\mathcal{P}(Q(\mathbb{T}), W(\mathbb{T}))$ is a quotient of a non-domestic string algebra.       
\end{proposition}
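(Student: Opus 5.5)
The plan is to exhibit, for a sphere with $n\geq 6$ punctures, one explicit triangulation $\mathbb{T}$. I will then present $\mathcal{P}(Q(\mathbb{T}),W(\mathbb{T}))$ as a quotient $B/J$, where $B=KQ(\mathbb{T})/I$ is a string algebra and $I$ is generated only by the zero relations $\partial_\alpha$-monomials of length two. Finally I will exhibit two bands of $B$ that generate infinitely many bands. For the triangulation I would follow \cite{Y16}. Glue triangles around a puncture of high valency, for instance a ``zig-zag'' triangulation in which two punctures $p,q$ each have valency at least $3$, and the remaining punctures have valency $4$.

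With no self-folded triangles, every arrow of $Q(\mathbb{T})$ lies in exactly one $3$-cycle coming from an interior triangle. It also lies in exactly one cycle around a puncture. The potential is $W(\mathbb{T})=\sum_{\triangle}\alpha\beta\gamma-\sum_{p}c_p$. The cyclic derivatives therefore give commutativity relations $\beta\gamma=(\text{puncture path})$ together with the zero relations of length two coming from the triangles. Each vertex of $Q(\mathbb{T})$ has exactly two incoming and two outgoing arrows. Moreover, for each arrow $\alpha$ there is exactly one $\beta$ with $\alpha\beta$ not a triangle-relation, namely the next arrow around the puncture.

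Define $B$ to be $KQ(\mathbb{T})$ modulo the length-two triangle relations $\alpha\beta$ and sufficiently long puncture-cycle paths, taken so that $B$ is finite-dimensional. Then $B$ is special biserial and monomial, hence a string (indeed gentle-type) algebra. The Jacobian algebra is obtained from $B$ by further imposing the commutativity relations $\beta\gamma-c_p'$. So the Jacobian algebra is a quotient of $B$, with $B\twoheadrightarrow\mathcal{P}$. I need to check that the length of the puncture-path truncation matches the finite-dimensionality of \cite{La12} and \cite{HI11}.

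The main obstacle is non-domesticity of $B$. For this I would use the three-cycle and the puncture cycle that ``commute'' in the sense of \cite{Y16}. Concretely, I would find a vertex $v$ and two distinct bands $U,V$ based at $v$ starting with the same direct arrow $\alpha$ and ending with the same inverse letter $\beta^{-1}$. Here $U$ winds once around puncture $p$ and then back along an inverse path, and $V$ does the same around $q$; $U$ and $V$ contain no substring $\alpha\beta^{-1}$. By \cite[p.~41]{Sc97} (the criterion recalled before Theorem \ref{SDSG}), this yields infinitely many bands $U^{a_1}V^{b_1}\cdots$. So $B$ is non-domestic.

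The hard verification is that the concatenations $UV$ and $VU$ avoid both the triangle zero relations and the truncated puncture relations. This requires the valency condition, and that is exactly where five or fewer punctures fails. In that case the only available cycles overlap and force a forbidden length-two substring. I would check this by drawing the local quiver around the two chosen punctures.
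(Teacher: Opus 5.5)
Your argument breaks at the description of the Jacobian relations, and with it the claimed surjection $B\twoheadrightarrow\mathcal P$. Write $\mathcal P=\mathcal P(Q(\mathbb T),W(\mathbb T))$, $f(\alpha)$ for the arrow after $\alpha$ in its triangle, and $g(\alpha)$ for the arrow after $\alpha$ around its puncture $p$, of valency $d_p$. The cyclic derivative is $\partial_\alpha W=f(\alpha)f^2(\alpha)-x_p\,g(\alpha)\cdots g^{d_p-1}(\alpha)$ with $x_p\neq 0$. This is one commutativity relation per arrow, and there are no zero relations of length two. In $\mathcal P$ the triangle paths $\beta\gamma$ equal nonzero multiples of puncture paths of length $d_p-1$, and these are nonzero: they are proper subpaths of the puncture cycles spanning the socle of the symmetric algebra $\mathcal P$. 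So your ideal is not contained in the Jacobian ideal. Imposing $\beta\gamma-c'_p$ on $B$, where $\beta\gamma=0$ already, simply kills $c'_p$. The result is $KQ/\langle \alpha f(\alpha),\, c'_p\rangle=\mathcal P/\langle \alpha f(\alpha)\rangle$, a proper quotient of $\mathcal P$, not $\mathcal P$ itself.

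In fact no string algebra can surject onto $\mathcal P$ for these triangulations. Quotients of special biserial algebras are special biserial, while in $\mathcal P$ both $\alpha f(\alpha)$ and $\alpha g(\alpha)$ are nonzero. The statement therefore has to be read the other way round: $\mathcal P$ maps onto a non-domestic string algebra. That is also the only direction the paper can use. Proposition~\ref{QSM} lifts superdecomposable modules from a quotient of $\mathcal P$ to $\mathcal P$, and in the proof of Proposition~\ref{NDSG} the skew-gentle algebra is $\bar\Lambda=A/I$. The right object is exactly the algebra you ended up producing, $\mathcal P/\langle\alpha f(\alpha)\rangle=KQ/\langle\alpha f(\alpha),\ \text{puncture paths of length } d_p-1\rangle$, which is a string algebra. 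Note that the truncation is forced to be exactly $d_p-1$; it is not something you choose ``sufficiently long''.

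The non-domesticity step then has to be redone. In this string algebra a direct or inverse substring is a path around a single puncture of length at most $d_p-2$. So bands that ``wind once around $p$'' are not strings, and your explanation of what fails for five punctures is conjecture rather than argument. The paper does not verify this step itself; it imports a suitable triangulation and pair of bands from the proof of the main result of \cite{Y16}.

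You can close the gap directly with the bipyramid: two poles of valency $n-2\ge 4$, and the remaining $n-2$ punctures of valency $4$. A string is then a walk of the following kind:
\begin{enumerate}
\item rotate forward $1,\dots,d_p-2$ steps around one endpoint $p$ of the current arc;
\item pass to the other endpoint of the arc reached and rotate backward there;
\item repeat, alternating forward and backward runs.
\end{enumerate}
Since every valency is at least $4$, each (arc, endpoint) state admits forward runs of at least two different lengths. Hence a terminal strongly connected component of this finite state graph contains two closed walks at one state whose first runs have different lengths. These generate infinitely many bands, so the quotient is non-domestic.
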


Puninski has proved that super-decomposable pure-injective modules exist over any non-polynomial growth string algebra over a countable field. 

\begin{proposition}\label{NDSG}
For a sphere with five punctures there exists a triangulation $\mathbb{T}$ such that the Jacobian algebra $\mathcal{P}(Q(\mathbb{T}), W(\mathbb{T}))$ is a quotient of a skew-gentle algebra.   
\end{proposition}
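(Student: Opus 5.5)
The plan is to exhibit an explicit triangulation $\mathbb{T}$ of the sphere with five punctures, compute $(Q(\mathbb{T}),W(\mathbb{T}))$, and then recognise the relations of the Jacobian algebra as a skew-gentle presentation modulo extra relations. Since the statement is existential, one convenient triangulation suffices.

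\textbf{Choice of triangulation.} I would take the triangulation obtained from the ``double pyramid'' picture: put three punctures on the equator and the remaining two at the poles, subdivided so that no self-folded triangles appear. More convenient is a triangulation with a puncture of valency two: a degenerate digon around it. Two arcs $i,i'$ bound such a digon and appear in the quiver as a pair of vertices with identical incident arrows, which is exactly the doubling $i^{\pm}$ of a special vertex in $(Q^{sg},I^{sg})$. So I would arrange the five punctures to produce as many such valency-two configurations as possible. The arcs number $3\cdot 5-6=9$ and the triangles $6$. The quiver $Q(\mathbb{T})$ then has its arrows read off from the triangles. The potential $W(\mathbb{T})$ is the sum of the $3$-cycles from the triangles plus, with sign, the cycles around each puncture (with $\lambda$-scalars, which can be normalised).

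\textbf{Identifying the skew-gentle structure.} Next I would collapse each pair of arcs bounding a valency-two puncture to a single special vertex $i$ of an underlying quiver $Q$, keeping the other arcs ordinary. I would then compute the cyclic derivatives $\partial_a W$. For an arrow $a$ lying in a triangle and adjacent to a valency-two puncture, $\partial_a W$ is a difference of a length-two path and a longer path around a puncture. The pairs $(i^+,a,\cdot),(i^-,a,\cdot)$ give relations of the form $\sum_b \lambda_b (b,\alpha,c)(a,\beta,b)$, i.e.\ the commutativity relations in $I^{sg}$ with the signs $\lambda_{i^-}=-1$. The remaining derivatives give zero relations $\beta\alpha$ between consecutive triangle arrows, matching the gentle relations $I$. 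I would check that $(Q,Sp,I)$ is skew-gentle by verifying that $(Q^{Sp},I^{Sp})$ is gentle: at each vertex at most two in- and two out-arrows, and the triangle $3$-cycles provide exactly the length-two zero relations. Finally, the higher-degree terms (puncture cycles of length $\geq 3$ in the non-degenerate parts, and the completion) only add relations. So $\mathcal{P}(Q(\mathbb{T}),W(\mathbb{T}))\cong KQ^{sg}/(\langle I^{sg}\rangle + J')$ for some ideal $J'$ generated by long paths, which is the desired quotient.

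\textbf{Main obstacle.} The hardest step is the sign and scalar bookkeeping. I must ensure the relations coming from $\partial W$ at the doubled vertices genuinely take the skew-gentle form $(i^+\text{-path})-(i^-\text{-path})$ modulo $J'$, rather than a different linear combination. I would handle this by rescaling arrows, using that $K$ is algebraically closed and $\mathrm{char}K\ne 2$. I would also need to check that the truncation by the long puncture cycles is compatible, i.e.\ that those cycles already lie in the ideal generated by $I^{sg}$ together with $J'$. For this I would first try the local computation from \cite{GLS16}, \cite{Y16}, which presents these Jacobian algebras via degenerations, and adapt their explicit quiver.
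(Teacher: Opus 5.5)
There is a genuine gap at your central step: the isomorphism $\mathcal{P}(Q(\mathbb{T}),W(\mathbb{T}))\cong KQ^{sg}/(\langle I^{sg}\rangle+J')$, which you justify by saying the puncture terms of $W$ ``only add relations''. They do not. Write $W=W_{\mathrm{tri}}+W_{\mathrm{punct}}$. The Jacobian ideal is generated by the single elements $\partial_aW_{\mathrm{tri}}+\partial_aW_{\mathrm{punct}}$, so the puncture cycles deform the length-two triangle relations rather than being imposed alongside them.

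Suppose an arrow $c$ lies on a triangle and on a puncture cycle $\omega$, as $c_1$ does on $b_3c_3a_2b_1c_1$ in the paper's quiver. Then $\partial_cW$ is, up to nonzero scalars, the length-two triangle path plus $x\,\partial_c\omega$ with $x\neq0$. If that length-two path lay in $J(W)$, then so would $\partial_c\omega$, and hence the rotation $c\,\partial_c\omega$ of $\omega$. But in the Jacobian algebra $A$ of a closed surface the puncture cycles are nonzero: they span the socles of the indecomposable projectives of this weakly symmetric algebra. So the gentle zero relations in $I^{sg}$ fail in $A$. The check you list as your main obstacle (that the puncture cycles lie in $\langle I^{sg}\rangle+J'$) would therefore fail outright; it is not a matter of signs or rescaling.

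Your choice of triangulation is also off. A valency-two puncture inside a digon does not produce twin vertices. The two triangles create a $2$-cycle between the arcs at the puncture; reduction removes it (changing the potential), and those arcs end up with different neighbourhoods. Vertices with identical incident arrows, such as $2,2'$ with $3\to 2\to 1$ and $3\to 2'\to 1$, come from a puncture enclosed in a self-folded triangle (the plain and notched radius). That is the skewed-gentle triangulation of \cite{GLS16} which the paper uses.

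The paper avoids the false isomorphism by passing to a quotient of $A$. It takes the puncture part $W'=b_3c_3a_2b_1c_1+a_1b'_2c'_2a_3$ and sets $\bar\Lambda=A/I$, where $I$ is generated by the cyclic derivatives of $W'$. In $\bar\Lambda$ the derivatives of the triangle part and of $W'$ vanish separately. So $\bar\Lambda$ is the skew-gentle (clannish) algebra with relations $\epsilon_i^2-\epsilon_i$, $a_ib_i$, $b_ic_i$, $c_ia_i$, together with the long paths coming from $\partial W'$. It is then identified with a $\mathbb{Z}_2$-skew group algebra of a gentle algebra.

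The correct relationship therefore runs opposite to yours: a quotient of a skew-gentle algebra appears as a quotient of $A$, not as a presentation of $A$. This is exactly the direction needed later in Proposition \ref{QSM}. To repair your argument, replace the claimed isomorphism by the surjection $A\to A/\langle \partial W_{\mathrm{punct}}\rangle$, or by the degeneration argument of \cite{GLS16}. Then do your sign bookkeeping for that quotient.
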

\begin{proof}
Consider the skewed-gentle triangulation $\mathbb{T}$ of Fig. \ref{fig:1} (see definition of skewed-gentle triangulation in \cite[Section 6.7]{GLS16}) and the Jacobian algebra $A = \mathcal{P}(Q(\mathbb{T}), W(\mathbb{T})$. Let $I$ be the ideal of $K\langle \langle Q(\mathbb{T})\rangle \rangle$ generated by the set $\delta(W')$ of cyclic derivatives of potential $W'= b_3c_3a_2b_1c_1 + a_1b'_2c'_2a_3$. Consider the quotient algebra $\bar\Lambda=A/I$ (Fig. \ref{fig:2}).

\begin{figure}[h]
\begin{minipage}[b]{0.5\linewidth}
\includegraphics[width=0.6\linewidth, height=0.15\textheight]{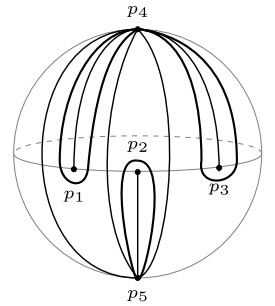}
\caption{Triangulation T of a sphere with 5 punctures.}
\label{fig:1} 
\end{minipage}
\hspace{0.05cm}
\begin{minipage}[b]{0.38\linewidth}
\centering
\begin{tikzcd}[sep={3.0em,between origins}]
                    & 2 \arrow[ld, "c_1"']  &                                                                               & 4 \arrow[rd, "c_2"]    &                                        & 6 \arrow[ld, "c_3"']  &                                         \\
1 \arrow[rr, "a_1"] &                       & 3 \arrow[ru, "b_2"] \arrow[lu, "b_1"'] \arrow[rd, "b'_2"'] \arrow[ld, "b'_1"] &                        & 5 \arrow[ll, "a_2"'] \arrow[rr, "a_3"] &                       & 1 \arrow[lu, "b_3"'] \arrow[ld, "b'_3"] \\
                    & 2' \arrow[lu, "c'_1"] &                                                                               & 4' \arrow[ru, "c'_2"'] &                                        & 6' \arrow[lu, "c'_3"] &                                        
\end{tikzcd}
                                                                                      
    \caption{$Q(\mathbb{T})$}
    \label{fig:2}
    \end{minipage}
\end{figure}

Where, $I=\langle c_3a_2b_1c_1, a_2b_1c_1b_3, b_1c_1b_3c_3,c_1b_3c_3a_2, b_3c_3a_2b_1, b'_2c'_2a_3, c'_2a_3a_1, a_3a_1b'_2, a_1b'_2c'_2 \rangle$. Then $\bar\Lambda$ is isomorphic to a skewed-gentle algebra $K\langle Q'\rangle/J$ where $Q'$ is in Fig. \ref{fig:3} and $J$ is the ideal in $K\langle Q'\rangle$ generated by $\epsilon_i^2-\epsilon_i, a_ib_i, b_ic_i$ and $c_ia_i$ for $i= 1,2,3$ and the set $\{b_2\epsilon_2c_2a_3, \epsilon_2c_2a_3a_1,
c_2a_3a_1b_2, a_1b_2\epsilon_2c_2, \epsilon_3c_3a_2b_1\epsilon_1c_1, c_3a_2b_1\epsilon_1c_1b_3, a_2b_1\epsilon_1c_1b_3\epsilon_3, b_1\epsilon_1c_1b_3\epsilon_3c_3, \\ \epsilon_1c_1b_3\epsilon_3c_3a_2,
c_1b_3\epsilon_3c_3a_2b_1, b_3\epsilon_3c_3a_2b_1\epsilon_1\}$. 

\begin{figure}[h]
\begin{minipage}[b]{0.40\linewidth}
\centering
\begin{tikzcd}[sep={2.8em,between origins}]
                    & 2 \arrow[ld, "c_1"'] \arrow["\epsilon_1", loop, distance=2em, in=125, out=55] &                                        & 4 \arrow[rd, "c_2"] \arrow["\epsilon_2", loop, distance=2em, in=125, out=55] &                                        & 6 \arrow[ld, "c_3"'] \arrow["\epsilon_3", loop, distance=2em, in=125, out=55] &                      \\
1 \arrow[rr, "a_1"] &                                                                               & 3 \arrow[ru, "b_2"] \arrow[lu, "b_1"'] &                                                                              & 5 \arrow[ll, "a_2"'] \arrow[rr, "a_3"] &                                                                               & 1 \arrow[lu, "b_3"']
\end{tikzcd}
    \caption{$Q'$}
    \label{fig:3}
    \end{minipage}
\hspace{0.2cm}
\begin{minipage}[b]{0.50\linewidth}
\centering
\begin{tikzcd}[sep={2.6em,between origins}]
\bar{1} \arrow[rr, "\bar{a}_1"]    &                                                           & \bar{3} \arrow[ld, "\bar{b}_1"'] \arrow[rd, "\bar{b}_2"]    &                                                           & \bar{5} \arrow[ll, "\bar{a}_2"'] \arrow[rr, "\bar{a}_3"]    &                                                           & \bar{1} \arrow[ld, "\bar{b}_3"']  \\
                                   & \bar{2} \arrow[lu, "\bar{c}_1"'] \arrow[ld, "\bar{c}'_1"] &                                                             & \bar{4} \arrow[ru, "\bar{c}_2"] \arrow[rd, "\bar{c}'_2"'] &                                                             & \bar{6} \arrow[lu, "\bar{c}_3"'] \arrow[ld, "\bar{c}'_3"] &                                   \\
\bar{1}' \arrow[rr, "\bar{a}'_1"'] &                                                           & \bar{3}' \arrow[lu, "\bar{b}'_1"] \arrow[ru, "\bar{b}'_2"'] &                                                           & \bar{5}' \arrow[ll, "\bar{a}'_2"] \arrow[rr, "\bar{a}'_3"'] &                                                           & \bar{1}' \arrow[lu, "\bar{b}'_3"]
\end{tikzcd}                                                                                      
    \caption{$Q$}
    \label{fig:4}
    \end{minipage}
\end{figure}

Applying the Gabriel quiver construction \cite{ReRi85}, one can check that the algebra $\bar\Lambda$ is the skew group algebra of the gentle algebra $\Lambda=KQ/I'$ under the action of the group $\mathbb{Z}_2=\langle g \rangle$ defined as $g:\bar{i}\leftrightarrow \bar{i}'$ for $i=1,3,5$ and fixes remaining vertices of the quiver $Q$ in Fig. \ref{fig:4}. Note that the relation here is $I'=\langle \bar{a}_i\bar{b}_i, \bar{b}_i\bar{c}'_i, \bar{c}_i\bar{a}_i, \bar{a}'_i\bar{b}'_i, \bar{b}'_i\bar{c}_i, \bar{c}'_i\bar{a}'_i\rangle$ for $i= 1,2,3$.
\end{proof}

\begin{proposition}\label{QSM}
Let $\Lambda$ be a finite-dimensional algebra and $\Lambda'$ be a quotient of $\Lambda$. If $\Lambda'$ has a superdecomposable module, then the original algebra $\Lambda$ also has one.
\end{proposition}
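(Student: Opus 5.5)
The plan is to use restriction of scalars along the quotient map. Let $\pi:\Lambda\to\Lambda'=\Lambda/I$ be the canonical surjection and let $\pi^\ast:\mathrm{Mod}\mbox{-}\Lambda'\to\mathrm{Mod}\mbox{-}\Lambda$ be the induced restriction functor. This functor is exact and fully faithful, and its image is closed under direct summands and direct sums. Its image is exactly the modules annihilated by $I$. Given a superdecomposable pure-injective $\Lambda'$-module $X$, I would show that $\pi^\ast X$ is a superdecomposable pure-injective $\Lambda$-module.

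\textbf{Step 1: superdecomposability.} Suppose $\pi^\ast X = Y\oplus Z$ as $\Lambda$-modules with $Y\neq 0$ indecomposable. Since $IX=0$, we have $IY=0$, so $Y$ is naturally a $\Lambda'$-module. Because $\pi^\ast$ is full, the projection and inclusion of $Y$ are $\Lambda'$-linear. Hence $Y$ is a $\Lambda'$-direct summand of $X$. Full faithfulness also gives $\mathrm{End}_{\Lambda'}(Y)=\mathrm{End}_\Lambda(Y)$, so $Y$ is indecomposable over $\Lambda'$. This contradicts the superdecomposability of $X$, so $\pi^\ast X$ has no indecomposable direct summand. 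It is also nonzero.

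\textbf{Step 2: pure-injectivity.} This is the step needing the most care. I would use the characterisation that a module $N$ is pure-injective if and only if the summation map $N^{(J)}\to N$ extends to $N^J\to N$ for every set $J$. Sums and products of $\Lambda$-modules annihilated by $I$ are again annihilated by $I$, and they coincide with the corresponding $\Lambda'$-sums and products. Because $\pi^\ast$ is full, a $\Lambda'$-linear extension of the summation map is the same thing as a $\Lambda$-linear one. So the criterion transfers directly.

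An alternative route is to note that $\pi^\ast$ commutes with direct limits and preserves pure-exact sequences, because tensoring over $\Lambda$ with a $\Lambda$-module factors through $\Lambda'\otimes_\Lambda -$. One can also observe that the pp-definable subgroups over $\Lambda'$ are pp-definable over $\Lambda$, by adding the equations $rx=0$ for $r$ in a finite generating set of $I$. Algebraic compactness then transfers.

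\textbf{Step 3: the pointed-module version.} If the superdecomposability of $\Lambda'$ was obtained through Theorem \ref{Sup}, that is, from an independent pair of dense chains in $\mathcal{P}^\theta_{\Lambda'}$, one can also argue at the level of pointed modules. The functor $\pi^\ast$ preserves pushouts, because it is exact. It is fully faithful, so it preserves and reflects pointed homomorphisms, indecomposability and non-isomorphism. Hence it sends the independent pair to an independent pair over $\Lambda$, taking $\pi^\ast\theta$ as the base. This is exactly the argument of \cite[Theorem 3.13]{KaPa14} for a fully faithful exact functor. Theorem \ref{Unw} and Theorem \ref{Sup} then give the conclusion when the field is countable.

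The direct argument in Steps 1 and 2 avoids any countability assumption, so I would present it as the main proof.
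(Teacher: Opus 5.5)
Your proof is correct and uses the same idea as the paper: restriction of scalars along $\Lambda\to\Lambda/I$. It is more complete than the paper's version in two respects.

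\textbf{Decomposition direction.} The paper justifies superdecomposability by noting that every $\Lambda'$-decomposition of $M$ is also a $\Lambda$-decomposition. That is the converse of what is needed. To rule out an indecomposable $\Lambda$-direct summand, one must know that every $\Lambda$-summand is already a $\Lambda'$-summand, and that indecomposability is the same over both rings. Your Step 1 gets exactly this from $IY=0$ and full faithfulness.

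\textbf{Pure-injectivity.} The paper never addresses it. Your Step 2 supplies it through the Jensen--Lenzing summation-map criterion, with no countability hypothesis.

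\textbf{Step 3.} The transfer of an independent pair via Kasjan--Pastuszak for a fully faithful exact functor is valid. It is weaker, though: it assumes the superdecomposable module came from an independent pair and that $K$ is countable. Given Steps 1--2 it is redundant.

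\textbf{A correction to your aside.} Algebraic compactness of $\pi^\ast X$ needs the other direction from the one you state. What matters is that $\Lambda$-pp conditions on $X$ are $\Lambda'$-pp conditions, obtained by reducing coefficients modulo $I$. Adding the equations $rx=0$ is irrelevant here. Also, preservation of pure-exact sequences alone does not give pure-injectivity. You must also handle $\Lambda$-pure embeddings $X\to Y$ with $IY\neq 0$, for example by noting that $X\to Y/IY$ is $\Lambda'$-pure and hence splits.
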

\begin{proof}
A module $M$ over the quotient algebra $\Lambda'=\Lambda/I$ can be viewed as an $\Lambda$-module (via the natural homomorphism $\Lambda\to \Lambda/I$, where $I$ acts as zero). If $M$ is superdecomposable as an $\Lambda/I$-module, it is also superdecomposable as an $\Lambda$-module because any direct decomposition $M=N\oplus K$ in $\Lambda/I$-modules is also a valid direct decomposition in $\Lambda$-modules.
\end{proof}

Note that the algebra $\Lambda$ in Proposition \ref{NDSG} is a non-domestic string algebra, as the bands $\beta_1:= \bar{a}_3\bar{a}_1{\bar{a}_2}^{-1}$ and $\beta_2:=\bar{a}_1\bar{b}_2\bar{c}_2{\bar{c}_3}^{-1}{\bar{b}_3}^{-1}$ commute with each other. Thus $\Lambda$ has a superdecomposable pure-injective module by Theorem \ref{supstr}. The following theorem ensures the existence of a non-symmetric pair of dense chains of pointed modules over $\Lambda$.  
\begin{theorem}\label{NSDP}
Assume that $(U, V)= (\bar{a}_3\bar{a}_1\bar{a}_2^{-1}, \bar{a}_3\bar{a}_1\bar{b}_2\bar{c}_2\bar{c}_3^{-1}\bar{b}_3^{-1}\bar{a}_1\bar{a}_2^{-1})$. Then $(U,V)$ and $(U^{-1}, V^{-1})$ are $Q$-generating pairs of bands over $\Lambda$. Moreover, the pair
$$((M(X), z_1^X)_{X\in L^T_S(U,V)}, (M(Y), z_1^Y)_{Y\in L^{T'}_{S'}(U^{-1},V^{-1})}$$ is an independent pair of dense chains of pointed modules in $\Lambda$-mod such that:
\begin{enumerate}
\item ${}^gM(S_1)\ncong M(S_2)$ for any $S_1, S_2\in L^U_V(U, V)$,
\item ${}^gM(T_1)\ncong M(T_2)$ for any $T_1, T_2\in L^{U^{-1}}_{V^{-1}}(U^{-1},V^{-1})$,
\item ${}^gM(T_1^{-1}S_1)\ncong M(T_2^{-1}S_2)$ for any $S_1, S_2\in L^U_V(U, V)$ and $T_1, T_2\in L^{U^{-1}}_{V^{-1}}(U^{-1},V^{-1})$.
\end{enumerate}
\end{theorem}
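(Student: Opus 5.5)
The plan has three stages: check the combinatorics of the two bands, invoke Theorem \ref{IPDC} for the independent pair, and then verify the three non-symmetry conditions by comparing strings up to the $\mathbb{Z}_2$-action.

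\emph{Step 1: the bands.} We first check that $U=\bar a_3\bar a_1\bar a_2^{-1}$ and $V=\bar a_3\bar a_1\bar b_2\bar c_2\bar c_3^{-1}\bar b_3^{-1}\bar a_1\bar a_2^{-1}$ are bands over the gentle algebra $\Lambda=KQ/I'$. This amounts to checking, letter by letter, three things:
\begin{itemize}
\item consecutive direct letters avoid the zero relations $\bar a_i\bar b_i,\bar b_i\bar c'_i,\bar c_i\bar a_i$, and consecutive inverse letters avoid them after inversion;
\item no letter is followed by its own inverse;
\item the cyclic closures $UU$, $UV$, $VU$, $VV$ are again strings.
\end{itemize}
Both words begin with the direct arrow $\bar a_3$ and end with the inverse arrow $\bar a_2^{-1}$. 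Neither is a prolongation of the other, since they share the prefix $\bar a_3\bar a_1$ and then diverge: $U$ continues with $\bar a_2^{-1}$, while $V$ continues with $\bar b_2$. Under the ordering on $S(\bar a_3)$ recalled before Lemma \ref{MBGA}, the third clause, with $S'=\bar a_3\bar a_1$, $\psi=\bar b_2$ and $\phi=\bar a_2$, gives $V<U$. After relabelling, or using the ``or vice versa'' clause in the definition, $(U,V)$ is $Q$-generating. Applying the same argument to the inverses, which share a first direct letter and a last inverse letter, shows that $(U^{-1},V^{-1})$ is $Q$-generating. Theorem \ref{IPDC} then gives the independent pair of dense chains directly.

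\emph{Step 2: non-symmetry.} The string modules behave well under the action. Since $g$ acts on $Q$ by a quiver automorphism, ${}^gM(S)\cong M(g(S))$, where $g(S)$ is obtained by applying $g$ letter by letter. By the classification of string modules, ${}^gM(S_1)\cong M(S_2)$ holds if and only if $S_2=g(S_1)$ or $S_2=g(S_1)^{-1}$. Condition (1) therefore reduces to a purely combinatorial claim: no word $S_2\in L^U_V(U,V)$ equals $g(S_1)$ or $g(S_1)^{-1}$.

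The key observation is which letters occur. Every word in $L^U_V(U,V)$ is a concatenation of copies of $U$ and $V$, so it uses only the unprimed arrows $\bar a_i,\bar b_i,\bar c_i$. The involution $g$ swaps $\bar i\leftrightarrow\bar i'$ for $i=1,3,5$, so it sends each $\bar a_i$ to $\bar a'_i$. Since every $S\in L^U_V(U,V)$ contains $\bar a_1$, $g(S)$ contains the primed arrow $\bar a'_1$. On the other hand, $S_2$ and $S_2^{-1}$ contain no primed arrows, so neither can equal $g(S_1)$. This proves (1). Condition (2) follows by the same argument applied to inverses. For (3), $T_1^{-1}S_1$ again uses only unprimed letters and contains $\bar a_1$, while $g(T_1^{-1}S_1)$ contains $\bar a_1'$, so the same contradiction applies. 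Here we use the preceding lemma of \cite{Pu08}, which identifies $M(T^{-1}S)$ with the pushout.

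\emph{Main obstacle.} The delicate step is Step 1: confirming that the indicated concatenations are genuinely strings for $I'$. The potential problem is at the junction $\bar a_1\bar b_2$, together with the orientation of $\bar c_2\bar c_3^{-1}$ at vertex $\bar 5$. In particular, the quiver involves $\bar c'_i$ and $\bar b'_i$, and the relation $\bar b_i\bar c'_i$ must not obstruct the path. I would check this first by drawing the walk explicitly on Fig.~\ref{fig:4}. If a relation blocks a junction, I would adjust $V$ by substituting the primed counterpart of the relevant segment, which keeps $\bar a_1$ present and does not affect the argument in Step 2.
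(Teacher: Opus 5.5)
Your proposal is correct and shares the paper's skeleton. Independence comes from Theorem \ref{IPDC}. Non-symmetry is reduced, through ${}^gM(S)\cong M(g(S))$ and the classification of string modules, to showing that no member of the relevant family equals $g(W)$ or $g(W)^{-1}$ for a member $W$, with $M(T^{-1}S)$ standing in for the pushout. The difference is the final combinatorial observation:
\begin{itemize}
\item The paper compares first letters of the specific words $S_i=VX_iVU$ and $T_i=V^{-1}Y_iU^{-2}$. Here $V$ and $U$ begin with $\bar a_3$, whereas ${}^gV$, ${}^gU$ and ${}^g(U^{-1})$ begin with $\bar a'_3$, $\bar a'_3$ and $\bar a'_2$.
\item You note instead that $U$ and $V$ use only unprimed arrows, and that $g$ sends every unprimed arrow to a primed one.
\end{itemize}
Your version is uniform: it does not depend on the prefixes and suffixes $S,T,S',T'$, nor on which band closes the chain words. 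It is therefore unaffected by the labelling problem you correctly spotted. Under the order recalled before Lemma \ref{MBGA} one has $V<U$, so Theorem \ref{DCWE} has to be applied with the roles of $U$ and $V$ exchanged. The price is that your observation is special to this example. In the later Nazarova--Zavadskij and diamond examples, ${}^gU=U^{-1}$ and ${}^gV=V^{-1}$, so the set of arrows used by $U$ and $V$ is $g$-stable. There only a positional comparison like the paper's can work.

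On Step 1, the check you postpone does go through with $I'$ as stated:
\begin{itemize}
\item $\bar a_3\bar a_1$, $\bar a_1\bar b_2$, $\bar b_2\bar c_2$ and $\bar b_3\bar c_3$ are not among the generators of $I'$. At $\bar 4$ the relations are $\bar b_2\bar c'_2$ and $\bar b'_2\bar c_2$, so your worry about $\bar b_i\bar c'_i$ does not arise.
\item The turns $\bar c_2\bar c_3^{-1}$ at $\bar 5$ and $\bar b_3^{-1}\bar a_1$ at $\bar 1$ involve distinct arrows.
\item Every junction between consecutive blocks is $\bar a_2^{-1}\bar a_3$ at $\bar 5$.
\end{itemize}
So your fallback is never triggered. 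That is fortunate, because it is not harmless as you claim. If $V$ contained a primed arrow, words over $\{U,V\}$ would no longer be supported on unprimed arrows, and your Step 2 argument would collapse. You would then need something like the paper's first-letter comparison instead.
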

\begin{proof}
$(a)$ We check that $(U, V)$ and $(U^{-1}, V^{-1})$ are $Q$-generating pairs of bands over $\Lambda$ and the pairs are independent pairs of dense chains of pointed modules by Theorem \ref{IPDC}. 

Assume that $S_1= VX_1VU$ and $S_2= VX_2VU$ for some $X_1, X_2\in \Sigma(U, V)$. Thanks to Proposition 5.8, it is enough to show that $S_2\neq {}^gS_1$ and $S_2\neq {}^gS_1^{-1}$.
Obviously ${}^gS_1= {}^g(VX_1VU)= {}^gV{}^gX_1{}^g(VU)$ and ${}^gS_1^{-1}= ({}^gV{}^gX_1{}^g(VU))^{-1}= {}^gU^{-1}{}^gV^{-1}{}^gX_1^{-1}{}^gV^{-1}$. Therefore, $S_2\neq {}^gS_1$ since $V$, as a band, starts with a different arrow than ${}^gV$. Moreover, $S_2\neq {}^gS_1^{-1}$ by Lemma 5.1(a), since $S_2, {}^gS_1^{-1}\in \Sigma(U,V)$ and $S_2$ starts with $V$ and ${}^gS_1^{-1}$ starts with ${}^gU^{-1}$.

$(b)$ Assume that $T_1= V^{-1}Y_1U^{-2}$ and $T_2= V^{-1}Y_2U^{-2}$ for some $Y_1, Y_2\in
\Sigma(U^{-1}, V^{-1})$. We show that $T_2\neq {}^gT_1$ and $T_2\neq {}^gT_1^{-1}$. Here, ${}^gT_1= {}^g(V^{-1}Y_1U^{-2})= {}^gV^{-1}{}^gY_1{}^gU^{-2}$ and ${}^gT_1^{-1}= ({}^gV^{-1}{}^gY_1{}^gU^{-2})^{-1}= {}^gU^2{}^gY_1^{-1}{}^gV$. Thus the assertion follows from the fact that $V$ starts with a different arrow than $V^{-1}$ and $V^{-1}$ starts with a different arrow than ${}^gU$.

$(c)$ Suppose $S_1= VX_1VU, S_2= VX_2VU$ for some $X_1, X_2\in \Sigma(U,V)$ and $T_1=V^{-1}Y_1U^{-2}; T_2=V^{-1}Y_2U^{-2}$ for some $Y_1, Y_2\in \Sigma(U^{-1}, V^{-1})$. We will show that $T_2^{-1}S_2\neq {}^g(T_1^{-1}S_1)$ and $T_2^{-1}S_2\neq {}^g(T_1^{-1}S_1)^{-1}$. Here, $T_2^{-1}S_2= (V^{-1}Y_2U^{-2})^{-1}VX_2VU= U^2Y_2^{-1}V^2X_2VU$; $${}^g(T_1^{-1}S_1)= {}^g(U^2Y_1^{-1}V^2X_1VU)= {}^gU^2{}^gY_1^{-1}{}^gV^2{}^gX_1{}^gV{}^gU;$$ $${}^g(T_1^{-1} S_1)^{-1}={}^gU^{-1}{}^gV^{-1}{}^gX_1^{-1}{}^gV^{-2}{}^gY_1{}^gU^{-2}.$$ So the assertion follows as $U$ starts with a different arrow than ${}^gU$ and ${}^gU^{-1}$.
\end{proof}

Theorems \ref{NSDP} and \ref{SDSG} with Propositions \ref{NDS}, \ref{NDSG}, and \ref{QSM} conclude the next result.  

\begin{theorem}\label{JSD}
Let $S$ be a closed oriented surface with a non-empty finite collection $M$ of punctures, excluding only the case of a sphere with four (or fewer) punctures. For an arbitrary tagged triangulation $T$ in the Jacobian algebra $\Lambda= P(Q(T), W(T))$, there exists an independent pair of dense chains of pointed modules in $\Lambda$-mod. In consequence, there exists a super-decomposable pure-injective $\Lambda$-module, if the base field $K$ is countable. 
\end{theorem}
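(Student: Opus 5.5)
The proof will split into two cases, according to the surface, and in each case reduce to a quotient algebra on which an independent pair of dense chains has already been constructed. That pair is then pulled back along the quotient map. For the pull-back, observe that if $\Lambda'=\Lambda/I$, then restriction of scalars $\mathrm{mod}\mbox{-}\Lambda'\to\mathrm{mod}\mbox{-}\Lambda$ is fully faithful and exact. It therefore preserves local endomorphism rings, nonvanishing of $\chi$, pointed homomorphisms, non-isomorphism, and pointed pushouts. The last point holds because the pushout $M\ast N$ of $\Lambda'$-modules is annihilated by $I$, so it coincides with the pushout computed over $\Lambda$. Hence an independent pair of dense chains over $\Lambda'$ is one over $\Lambda$. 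This is the pointed-module refinement of Proposition \ref{QSM}, and \cite[Theorem 3.13]{KaPa14} could be cited for it as well.

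\emph{Case 1: all surfaces other than spheres with at most five punctures.} By Proposition \ref{NDS} (and, for higher genus, the corresponding statements in \cite{Y16}), I would choose a triangulation $\mathbb{T}$ for which $P(Q(\mathbb{T}),W(\mathbb{T}))$ has a non-domestic string algebra $\Lambda'$ as a quotient. Non-domesticity gives, by \cite[p.~41]{Sc97}, two bands $U,V\in B(\alpha)$ such that $(U,V)$ and $(U^{-1},V^{-1})$ are $Q$-generating. Theorem \ref{IPDC} then yields an independent pair of dense chains over $\Lambda'$, and the observation above lifts it to the Jacobian algebra.

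\emph{Case 2: the sphere with five punctures.} Take the skewed-gentle triangulation of Proposition \ref{NDSG}. The algebra $\bar\Lambda$ there is a quotient of the Jacobian algebra and is the skew group algebra $\Lambda\mathbb{Z}_2$ of the gentle algebra $\Lambda$. Theorem \ref{NSDP} gives a non-symmetric independent pair of dense chains over $\Lambda$. Applying Theorem \ref{NSIDPM}(1) with $\mathrm{char}K\neq2$ and $\theta$ semisimple, for instance $\theta$ a simple module at a vertex fixed by $g$ (or $\theta=\Lambda$ if permitted), produces an independent pair over $\bar\Lambda$, which is then lifted as above.

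It remains to pass from these particular triangulations to an arbitrary tagged triangulation $T$. This is the step I expect to be the main obstacle. Jacobian algebras of different triangulations are related by QP-mutation, and for closed surfaces they are derived equivalent, as weakly symmetric algebras related through Ladkani's work, but they are not Morita equivalent. The existence of a wide lattice of pointed modules, equivalently undefined width of the lattice of pp-formulae, would need to be transported along such equivalences. My first attempt would be the observation that the argument in \cite{Y16} applies to every triangulation directly: any $T$ contains a configuration of a $3$-cycle together with a commuting cycle of finite length, which already yields a non-domestic string quotient, with the five-punctured sphere handled through a skewed-gentle quotient as in \cite[Section 6.7]{GLS16}. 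Then Cases 1 and 2 apply to $T$ itself. If this fails, I would argue via stable equivalence of self-injective algebras, which preserves Krull–Gabriel dimension and the existence of superdecomposables.

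Finally, once an independent pair exists, Theorem \ref{Unw} makes the width of $\mathcal{P}^\theta_\Lambda$ undefined. For countable $K$ the algebra $\Lambda$ is a countable ring, being finite-dimensional over $K$, so Theorem \ref{Sup}(1) gives a super-decomposable pure-injective $\Lambda$-module.
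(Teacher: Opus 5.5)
For the particular triangulations, your argument is the paper's. You use Proposition \ref{NDS} (with \cite{Y16} for higher genus) and Theorem \ref{IPDC} in the string case, and Proposition \ref{NDSG}, Theorem \ref{NSDP} and Theorem \ref{NSIDPM} for the five-punctured sphere, then lift along the quotient map. Your lifting step is sharper than the paper's. Proposition \ref{QSM} only transfers superdecomposable modules, whereas the first assertion of the theorem asks for an independent pair of dense chains in $\Lambda$-mod. Your observation gives exactly this: restriction of scalars along $\Lambda\to\Lambda/I$ is fully faithful and exact, and computes the same pointed pushouts. You also read Propositions \ref{NDS} and \ref{NDSG} in the direction their proofs actually give, namely that the string (resp.\ skew-gentle) algebra is a quotient of the Jacobian algebra, although the statements are phrased the other way round.

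The genuine gap is the one you flag yourself: passing to an arbitrary tagged triangulation $T$. Propositions \ref{NDS} and \ref{NDSG} each produce only one good triangulation; for five punctures it is the specific skewed-gentle triangulation of Figure \ref{fig:1}. So Cases 1 and 2 prove the theorem only for those $T$, and neither of your fallbacks is carried out.

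Route (a) is just the assertion that every $T$ admits a non-domestic string or skewed-gentle quotient. That includes tagged triangulations with self-folded triangles, whose quivers and potentials have a different shape. It would need an explicit construction for each $T$, and the skewed-gentle quotient of Proposition \ref{NDSG} is tied to one triangulation.

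Route (b) does not give the first assertion. A derived equivalence between these self-injective Jacobian algebras yields a stable equivalence of Morita type. That functor is exact but fully faithful only modulo maps factoring through projectives, so \cite[Theorem 3.13]{KaPa14} does not apply. Nothing guarantees that pointed homomorphisms, $\theta$-isomorphism classes or pointed pushouts are preserved. At best it could transport functor-category invariants such as Krull--Gabriel dimension or undefined width, hence superdecomposables for countable $K$, and that invariance would still have to be proved or cited.

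The paper's one-line proof has exactly the same limitation, so the gap is inherited rather than introduced by you. Still, as written, neither argument covers every $T$.

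A smaller point concerns $\theta$. Theorem \ref{NSIDPM} assumes $\theta$ semisimple, so $\theta=\Lambda$ is not allowed. A simple module at a $g$-fixed vertex is also the wrong choice. Since $U,V,U^{-1},V^{-1}$ begin with direct letters, $z_1^X$ is annihilated by all arrows. You should therefore take $\theta$ to be the simple module at the vertex carrying $z_1^X$, which here is a vertex moved by $g$. For this $\theta$, the $\theta$-pointed homomorphisms and pushouts coincide with the $\Lambda$-pointed ones.
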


\begin{corollary}\label{JKG}
Let $S$ be a closed oriented surface with a non-empty finite collection $M$ of punctures, excluding only the case of a sphere with $4$ (or less) punctures. For an arbitrary tagged triangulation $T$, in the Jacobian algebra $\Lambda= P(Q(T), W(T))$, the $\mathrm{KG}(\Lambda)$ is infinite.    
\end{corollary}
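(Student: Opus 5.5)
The plan is to deduce Corollary \ref{JKG} from Theorem \ref{JSD} together with part (1) of Theorem \ref{Sup}, while checking that the countability hypothesis is not really needed for the KG-dimension statement. The route goes from independent pairs, to wide lattices, to undefined width, to infinite KG-dimension.

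\textbf{Step 1.} By Theorem \ref{JSD}, for every closed punctured surface other than a sphere with at most four punctures, and for every tagged triangulation $T$, the algebra $\Lambda=P(Q(T),W(T))$ has an independent pair of dense chains of $\theta$-pointed modules. Here $\theta=\Lambda$, or $\theta$ is the relevant semisimple module when the pair is transported from the gentle cover through $F_\lambda$ via Theorem \ref{NSIDPM}. Two auxiliary facts go into this step. First, the pair is built over a quotient $\Lambda/I$ of $\Lambda$ (Propositions \ref{NDS}, \ref{NDSG}). Second, pointed $\Lambda/I$-modules embed into pointed $\Lambda$-modules, and this embedding keeps pointed homomorphisms, pushouts, direct sums and isomorphism classes. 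The reason is that restriction of scalars along $\Lambda\to\Lambda/I$ is fully faithful and exact, so \cite[Theorem 3.13]{KaPa14} applies. This is the module-theoretic counterpart of Proposition \ref{QSM}. I will also need that the statement does not depend on the chosen triangulation. All tagged triangulations of a fixed surface are linked by flips, the corresponding Jacobian algebras are derived equivalent in the closed case \cite{La12}, and they are even related by mutation of QPs. I will instead use that Theorem \ref{JSD} is stated for arbitrary $T$, and take that as given.

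\textbf{Step 2.} Theorem \ref{Unw} shows that the sublattice generated by the two chains is wide. Hence the width of $\mathcal{P}_\Lambda^\theta$ is undefined.

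\textbf{Step 3.} The conclusion $\mathrm{KG}(\Lambda)=\infty$ should follow from Theorem \ref{Sup}(1). That theorem assumes $\Lambda$ is countable, whereas $K$ is an arbitrary algebraically closed field, and I expect this to be the main obstacle. I would address it in one of two ways. The first is the standard fact, due to Ziegler and Prest, that a wide sublattice of the pp-lattice already makes the Krull--Gabriel dimension undefined, over any ring. Indeed, a wide lattice contains a densely ordered chain of intervals none of which has finite length. Each iteration of the Gabriel localization only collapses intervals of finite length, so the wide part never becomes trivial, and the KG-dimension of the lattice of pp-formulas, which is equivalently the lattice $\mathcal{P}_\Lambda^{\Lambda^t}$, equals $\infty$. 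The countability hypothesis is needed only for the existence of superdecomposable modules, not for this step. If I want to avoid citing this, the second way is to note that all the modules involved are string modules, defined over the prime field, or over $\mathbb{Q}(\text{finitely many scalars})$ in the Jacobian case. I would then use that the KG-dimension does not drop under the base change to $K$, but this seems more delicate.

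So the main steps are the transfer along the quotient map in Step 1 and the field-independence of the width-implies-KG argument in Step 3. I would settle Step 3 with the direct lattice argument rather than going through countable subfields.
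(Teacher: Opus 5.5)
Your argument follows the paper's own route: Theorem \ref{JSD} supplies the independent pair, Theorem \ref{Unw} turns it into a wide sublattice, and undefined width then forces $\mathrm{KG}(\Lambda)=\infty$ via Theorem \ref{Sup}(1), which is how the corollary is meant to follow. Your concern about the countability hypothesis in Theorem \ref{Sup}(1) is justified, and your first fix is the right one. For any modular lattice, defined m-dimension implies defined width, so no assumption on $K$ is needed. In fact a single dense chain with nonzero points and local endomorphism rings already embeds a copy of $\mathcal{Q}$ in $\mathcal{P}_\Lambda^\theta$, and such a chain can never be collapsed.
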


\section{Superdecomposable modules over skew algebras and extension algebras}
As an application of Theorem \ref{NSIDPM}, we explicitly show the existence of superdecomposable modules over some well-known skew algebra and their extension algebras.

\vspace{0.1in}
\noindent{\textbf{Superdecomposable modules over incidence algebra of the Nazarova–Zavadskij poset}:}
We show that there exists an independent pair of dense chains of pointed modules over the incidence algebra of the Nazarova–Zavadskij poset, denoted by $\mathcal{NZ}$ in \cite[15.31]{Si92}, which is enlarged by a unique maximal element. More precisely, let $\Gamma:= K(NZ)^\ast$ be the bound quiver algebra $KQ/ \langle\rho \rangle$, where,$(Q, \rho)$ is given in Figure \ref{NZ}. 
\begin{figure}[h]
\begin{minipage}[b]{0.45\linewidth}
\centering
\begin{tikzcd}
v_1 \arrow[r, "\alpha"] \arrow[rdd, "\beta'"', shift right]  & v_2 \arrow[rd, "\gamma"]   &     & v_4 \arrow[ld, "\delta"']   \\
                                                             &                            & v_3 &                             \\
v'_1 \arrow[r, "\alpha'"] \arrow[ruu, "\beta"', shift right] & v'_2 \arrow[ru, "\gamma'"] &     & v_4' \arrow[lu, "\delta'"']
\end{tikzcd}
\caption{$\Gamma$ with $\rho= \{\alpha\gamma+\beta\gamma', \beta'\gamma+\alpha'\gamma'\}$}
\label{NZ}
\end{minipage}
\hspace{0.7cm}
\begin{minipage}[b]{0.40\linewidth}
\centering
\begin{tikzcd}
                                                                                      &                                                              & \bar v_3  &                                                              \\
\bar v_1 \arrow[r, "\bar\beta"', bend right=49] \arrow[r, "\bar\alpha", bend left=49] & \bar v_2 \arrow[ru, "\bar\gamma"] \arrow[rd, "\bar\gamma'"'] &           & \bar v_4 \arrow[lu, "\bar\delta"'] \arrow[ld, "\bar\delta'"] \\
                                                                                      &                                                              & \bar v'_3 &                                                             
\end{tikzcd}
    \caption{$\bar\Gamma$ with $\rho= \{\bar\alpha\bar\gamma', \bar\beta\bar\gamma\}$}
    \label{NZS}
    \end{minipage}
\end{figure}

Applying the Gabriel quiver construction \cite{ReRi85}, one can check that the algebra $\Gamma$ is the skew group algebra of the gentle algebra $\bar\Gamma=K\bar{Q}/\langle \bar{\rho}\rangle$, where,$(\bar{Q}, \bar{\rho})$ is given in Figure \ref{NZS}, under the action of the group $\mathbb{Z}_2=\langle g \rangle$ defined as $g:\bar{\alpha}\leftrightarrow \bar{\beta}$ and $g:\bar{\tau}\leftrightarrow \bar{\tau}'$ for $\tau=\gamma, \delta$.

Consider the bands $U=\bar{\delta}'\bar{\gamma}'^{-1}\bar{\gamma}\bar{\delta}^{-1}$ and $V=\bar{\delta}'\bar{\gamma}'^{-1}\bar{\beta}^{-1}\bar{\alpha}\bar{\gamma}\bar{\delta}^{-1}$. Note that the bands $U$ and $V$ commute. So $\bar{\Gamma}$ is a non-domestic gentle algebra. Since $U< V$ and $U$ is not a prolongation of $V$ or vice versa, we have $(U,V)$ and $(U^{-1}, V^{-1})$ are $Q$-generating pairs over the string algebra $\bar\Gamma$. Then for any $S, T\in \Sigma(U,V), S', T'\in \Sigma(U^{-1}, V^{-1})$, the set $L^T_S(U,V)$ and $L^{T'}_{S'}(U^{-1},V^{-1})$ are dense chains without endpoints by Theorem \ref{DCWE} and $((M(X), z_1^X)_{X\in L^T_S(U,V)}; (M(Y), z_1^Y)_{Y\in L^{T'}_{S'}(U^{-1},V^{-1})})$ is an independent pair of dense chains of pointed $\Lambda$-modules by Theorem \ref{IPDC}. Moreover, since ${}^g(U)=U^{-1}$ and ${}^g(V)=V^{-1}$, using a similar argument in Theorem \ref{NSDP}, one can easily show that this forms a non-symmetric pair. Finally, using Theorem \ref{NSIDPM}, we obtain an independent pair of dense chains of pointed modules and a wide poset over $\Gamma$, thereby showing that $\mathrm{KG}(\Gamma)$ is infinite.

\vspace{0.1in}
\noindent{\textbf{Superdecomposable modules over diamond algebra}:}
We show that there exists an independent pair of dense chains of pointed modules over the diamond algebra $\Lambda_3$. $\bar\Gamma=KQ/\langle \rho\rangle$, where, $(Q, \rho)$ is given in Figure \ref{diamond}. Applying the Gabriel quiver construction \cite{ReRi85}, one can check that the algebra $\Lambda_3$ is the skew group algebra of the algebra $\bar{\Lambda}_5=K\bar{Q}/\langle \bar{\rho}\rangle$, where, $(\bar{Q}, \bar{\rho})$ is given in Figure \ref{tomtom}, under the action of the group $\mathbb{Z}_2=\langle g \rangle$ defined as $g:\bar{v_i}\leftrightarrow \bar{v_i}'$ for $i=1,4$ and fixes the remaining vertices. Moreover, the algebra $\bar{\Lambda}_5$ is the skew group algebra of the gentle algebra $A_1=K\tilde{Q}/\langle \tilde{\rho}\rangle$, where, $(\tilde{Q}, \tilde{\rho})$ is given in Figure \ref{figure8}, under the action of the group $\mathbb{Z}_2=\langle g \rangle$ defined as $g:\tilde{\alpha}\leftrightarrow \tilde{\beta}$ and $g:\tilde{\gamma}\leftrightarrow \tilde{\delta}$. 

\begin{figure}[h]
\begin{minipage}[b]{0.50\linewidth}
\centering
\begin{tikzcd}
                           &                            & v_1 \arrow[ld, "\alpha'"] \arrow[rd, "\beta"'] \arrow[lld, "\alpha"'] \arrow[rrd, "\beta'"] &                           &                             \\
v_2 \arrow[rrd, "\gamma"'] & v'_2 \arrow[rd, "\gamma'"] &                                                                                             & v_3 \arrow[ld, "\delta"'] & v'_3 \arrow[lld, "\delta'"] \\
                           &                            & v_4                                                                                         &                           &                            
\end{tikzcd}
\caption{$\Lambda_3$ with $\bar{\rho}= \{\alpha\gamma+\beta\delta+\beta'\delta',\alpha'\gamma'+\beta\delta+\beta'\delta'\}$}
\label{diamond}
\end{minipage}
\hspace{0.6cm}
\begin{minipage}[b]{0.44\linewidth}
\centering
\begin{tikzcd}
\bar v_1 \arrow[r, "\bar \alpha"] \arrow[rdd, "\bar \beta"', shift right=2]    & \bar v_2 \arrow[r, "\bar \gamma"] \arrow[rdd, "\bar \gamma'", shift left] & \bar v_4  \\
                                                                               &                                                                           &           \\
\bar v'_1 \arrow[r, "\bar \beta'"] \arrow[ruu, "\bar \alpha'"', shift right=2] & \bar v_3 \arrow[r, "\bar \delta'"] \arrow[ruu, "\bar \delta", shift left] & \bar v'_4
\end{tikzcd}
    \caption{$\bar\Lambda_3$ with $\rho= \{\bar\alpha'\bar\gamma+\bar\beta'\bar\delta, \bar\alpha'\bar\gamma'+\bar\beta'\bar\delta',\bar\alpha\bar\gamma'+\bar\beta\bar \delta',\bar\alpha\bar\gamma+\bar\beta\bar\delta\}$}
    \label{tomtom}
    \end{minipage}
\end{figure}

Consider the bands $U=\tilde{\alpha}\tilde{\gamma}\tilde{\delta}^{-1}\tilde{\beta}^{-1}$ and $V=\tilde{\alpha}\tilde{\beta}^{-1}$ over $A_1$. Note that the bands $U$ and $V$ commute. So $A_1$ is a non-domestic gentle algebra. Since $U< V$ and $U$ is not a prolongation of $V$ or vice versa, we have $(U,V)$ and $(U^{-1}, V^{-1})$ are $Q$-generating pairs over the string algebra $\bar\Gamma$. Then for any $S, T\in \Sigma(U,V), S', T'\in \Sigma(U^{-1}, V^{-1})$, the set $L^T_S(U,V)$ and $L^{T'}_{S'}(U^{-1},V^{-1})$ are dense chains without endpoints by Theorem \ref{DCWE} and $((M(X), z_1^X)_{X\in L^T_S(U,V)}; (M(Y), z_1^Y)_{Y\in L^{T'}_{S'}(U^{-1},V^{-1})})$ is an independent pair of dense chains of pointed $A_1$-modules by Theorem \ref{IPDC}. Moreover, since ${}^gU=U^{-1}$ and ${}^gV=V^{-1}$, this pair is non-symmetric, and we obtain an independent pair of dense chains of pointed $\bar{\Lambda}_5$-modules by Theorem \ref{NSIDPM}. Such a pair is $((M(\bar{X}),\bar{z}_1^{\bar{X}})_{\bar{X}\in L^{\bar{T}}_{\bar{S}}(\bar{U},\bar{V})}; (M(\bar{Y}), \bar{z}_1^{\bar{Y}})_{\bar{Y}\in L^{\bar{T}'}_{\bar{S}'}(\bar{U}^{-1},\bar{V}^{-1})})$ where, $\bar{U}=\bar{\alpha}\bar{\gamma}\bar{\delta}^{-1}\bar{\delta}'\bar{\gamma}'^{-1}\bar{\alpha}'^{-1}\bar{\beta}'\bar{\beta}^{-1}$ and $\bar{V}=\bar{\alpha}\bar{\alpha}'^{-1}\bar{\beta}'\bar{\beta}^{-1}$. Here, ${}^g\bar{U}\neq\bar{U}^{-1}\neq \bar{U}$ and ${}^g\bar{V}\neq\bar{V}^{-1}\neq \bar{V}$. Finally, we get an independent pair of dense chains of pointed $\Lambda_3$-modules by Theorem \ref{NSIDPM}.

\vspace{0.1in}
\noindent{\textbf{Superdecomposable modules over a garland}:}
Consider the algebra $A= KQ/I$ where the quiver $Q$ is given in figure \ref{gar} and all squares in the quiver are anti-commutative; it is a garland \cite[Example~3.5]{AB22}. The algebra $A$ is obtained by one-point extension of $\bar{\Lambda}_5$ at $\bar{v}_0$ and one-point coextension at $\bar{v}_5$. We already checked that $\bar{\Lambda}_5$ has a superdecomposable module.

\begin{figure}[h]
\begin{minipage}[b]{0.43\linewidth}
\centering
\begin{tikzcd}
                                                           & \bar v_1 \arrow[r, "\bar \alpha"] \arrow[rdd, "\bar \beta"', shift right=2]    & \bar v_2 \arrow[r, "\bar \gamma"] \arrow[rdd, "\bar \gamma'", shift left] & \bar v_4 \arrow[rd, "\bar \eta"]    &          \\
\bar v_0 \arrow[ru, "\bar \tau"] \arrow[rd, "\bar \tau'"'] &                                                                                &                                                                           &                                     & \bar v_5 \\
                                                           & \bar v'_1 \arrow[r, "\bar \beta'"] \arrow[ruu, "\bar \alpha'"', shift right=2] & \bar v_3 \arrow[r, "\bar \delta'"] \arrow[ruu, "\bar \delta", shift left] & \bar v'_4 \arrow[ru, "\bar \eta'"'] &         
\end{tikzcd}
    \caption{Garland}
    \label{gar}
\end{minipage}
\hspace{0.6cm}
\begin{minipage}[b]{0.5\linewidth}
\centering
\begin{tikzcd}
V \arrow[dd, "g"'] \arrow[rr, "\phi"] &  & {\mathrm{Hom}_\Lambda(M,X)} \arrow[dd, "f_\ast"] \\
                                      &  &                                                  \\
V' \arrow[rr, "\phi'"]                &  & {\mathrm{Hom}_\Lambda(M,X')}                    
\end{tikzcd}
    \caption{Morphism in $\Lambda[M]$}
    \label{tomtom}
    \end{minipage}
\end{figure}

If an algebra $\Lambda$ has a superdecomposable module then its one-point extension algebra $\Lambda[M]=\left(\begin{smallmatrix} K & 0 \\ M & \Lambda \end{smallmatrix}\right))$ where $K$ is a field, also has a superdecomposable module, because the category $\mathrm{mod}\mbox{-}\Lambda[M]$ can be described as follows: the objects are of form a triple $(V, X; \phi)$, where $V$ is a $K$-vector space, $X$ is an $\Lambda$-module, $\phi\in \mathrm{Hom}_K(V, \mathrm{Hom}_\Lambda(M,X))$; the morphisms are of form $(f,g):(V,X;\phi)\to (V',X';\phi')$, where $f \in \mathrm{Hom}_\Lambda(X,X'), g\in \mathrm{Hom}_K(V,V')$, such that the following diagram is commutative i.e, $f_\ast \phi= \phi'g$ where, $f_\ast:=\mathrm{Hom}_\Lambda(\_,f)$; and the compositions of morphisms are natural. The canonical projection $p: \Lambda[M]\to \Lambda$ induces an embedding $\iota_\Lambda: \mathrm{mod}\mbox{-}\Lambda \to \mathrm{mod}\mbox{-}\Lambda[M]$. So we can view $\mathrm{mod}\mbox{-}\Lambda$ as a full subcategory of $\mathrm{mod}\mbox{-}\Lambda[M]$ \cite{PeTr99}. That is, for any $X$ in $\mathrm{mod}\mbox{-}\Lambda$, we write $\iota_\Lambda(X)$ as $X$ inside $\mathrm{mod}\mbox{-}\Lambda[M]$. In particular, if $\Lambda$ has a superdecomposable module $N$, we can often construct a superdecomposable module over $\Lambda[M]$ by using $\iota_\Lambda(N)$ as the $\Lambda$-component $(X)$ of the triple, potentially with $V=0$ chosen such that the structure allows for infinite, non-splitting decompositions.

\vspace{0.1in}
\noindent{\textbf{Superdecomposable modules over a surface algebra}:}
The authors in \cite{AB22} deal with the situation when the associated skew gentle algebras for two different gentle algebras $\Lambda$ and $\Lambda'$ have the same derived category using the geometry of the marked surface. A dissection $D$ on a marked surface $(S, M, P)$ where $M\subseteq \partial S$, is a maximal collection of non-intersecting arcs with endpoints in $M$ or $P$, that do not cut out a subsurface of $S$. To $(S, M, P, D)$, one can associate a gentle algebra. Let $\sigma\in \mathrm{Homeo}^+(S)$ of order $2$ with finitely many fixed points $X$ such that $\sigma(M)= M, \sigma(P)=P$ and $\sigma(D)= D$. This defines a $\mathbb{Z}_2$-action on $\Lambda$.

\begin{figure}[h]
\begin{minipage}[b]{0.32\linewidth}
\centering
\includegraphics[width=0.52\linewidth, height=0.08\textheight]{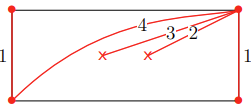}
\caption{$(S, \sigma, D)$}
    \label{orb}
    \end{minipage}
\hspace{0.15cm}
\begin{minipage}[b]{0.3\linewidth}
\centering
\includegraphics[width=0.6\linewidth, height=0.08\textheight]{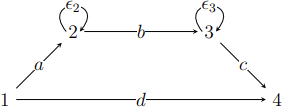}
\caption{$(\bar{Q}, \bar{I}, Sp)$}
    \label{NAD}
    \end{minipage}
\hspace{0.15cm}
\begin{minipage}[b]{0.32\linewidth}
\centering
\includegraphics[width=0.58\linewidth, height=0.08\textheight]{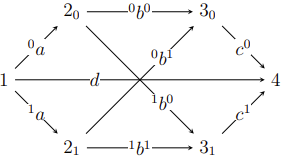}
\caption{$(\tilde{Q}, \tilde{I})$}
    \label{AD}
    \end{minipage}
\end{figure}

Consider the $X$-dissected orbifold corresponding to $(S, \sigma, D)$ is a cylinder with two orbifold points given in Figure \ref{orb}. The corresponding skew-gentle algebra $\bar{\Lambda}$ is given by the following skew-gentle triple $(\bar{Q}, \bar{I}, Sp)$ where $\bar{I}= \{ab, bc\}$ and $Sp= \{\epsilon_2, \epsilon_3\}$ in Figure \ref{NAD}. The associated admissible representation is given by $(\tilde{Q}, \tilde{I})$ where, $\tilde{I}= \{({}^0a)({}^0b^0)+({}^1a)({}^0b^1),({}^0a)({}^1b^0)+({}^1a)({}^1b^1),({}^0b^0)(c^0)+({}^1b^0)(c^1),({}^0b^1)(c^0)+({}^1b^1)(c^1)\}$ in Figure \ref{AD}. Note that the skew-gentle algebra $\bar{\lambda}$ is associated with two different gentle algebras $\Lambda_1=KQ_1/I_1$ and $\Lambda_2=KQ_2/I_2$ with the marked surface $(S_1, \sigma_1)$ (Figure \ref{Surf1}) and $(S_2, \sigma_2)$ (Figure \ref{Surf2}) respectively, where the quivers $Q_1, Q_2$ are given in Figures \ref{Gent1} and \ref{Gent2}, respectively, $I= \{a^+b^+, a^-b^-, b^+c^+, b^-c^-\}$ and $I_2$ are presented by dotted lines.

\begin{figure}[h]
\begin{minipage}[b]{0.48\linewidth}
\centering
\includegraphics[width=0.43\linewidth, height=0.10\textheight]{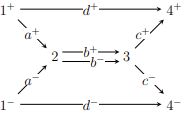}
\caption{$(Q_1,I_1))$}
    \label{Gent1}
    \end{minipage}
\hspace{0.4cm}
\begin{minipage}[b]{0.46\linewidth}
\centering
\includegraphics[width=0.48\linewidth, height=0.11\textheight]{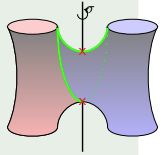}
\caption{$(S_1, \sigma_1)$}
    \label{Surf1}
    \end{minipage}
\end{figure}

Consider the bands $U=d^+{c^+}^{-1}c^-{d^-}^{-1}a^-{a^+}^{-1}$ and $V=d^+{c^+}^{-1}{b^-}^{-1}{a^+}^{-1}$ over $\Lambda_1$. Note that the bands $U$ and $V$ commute. So $\Lambda_1$ is a non-domestic gentle algebra. Since $U< V$ and $U$ is not a prolongation of $V$ or vice versa, we have $(U,V)$ and $(U^{-1}, V^{-1})$ are $Q$-generating pairs. Then the set $((M(X), z_1^X)_{X\in L^T_S(U,V)}; (M(Y), z_1^Y)_{Y\in L^{T'}_{S'}(U^{-1},V^{-1})})$ for any $S, T\in \Sigma(U,V), S', T'\in \Sigma(U^{-1}, V^{-1})$, is an independent pair of dense chains of pointed $\Lambda_1$-modules by Theorem \ref{IPDC}. Moreover, ${}^gU\neq U^{-1}$ and ${}^gV\neq V^{-1}$, this pair is non-symmetric, and we obtain an independent pair of dense chains of pointed $\bar{\Lambda}_5$-modules by Theorem \ref{NSIDPM}.

\begin{figure}[h]
\begin{minipage}[b]{0.5\linewidth}
\centering
\includegraphics[width=0.4\linewidth, height=0.10\textheight]{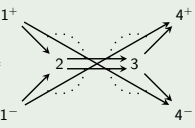}
\caption{$(Q_2,I_2)$}
    \label{Gent2}
    \end{minipage}
\hspace{0.4cm}
\begin{minipage}[b]{0.45\linewidth}
\centering
\includegraphics[width=0.36\linewidth, height=0.10\textheight]{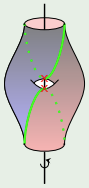}
\caption{$(S_2, \sigma_2)$}
    \label{Surf2}
    \end{minipage}
\end{figure}



\section{Superdecomposable modules over Brauer graph algebras}
Brauer graph algebras originate in the modular representation theory of finite groups, where they first appear in the form of Brauer tree algebras \cite{Ger69}. The author in \cite[Section~ 4.4]{Schrl18} compares derived equivalences of Brauer graph algebras with derived equivalences of frozen Jacobian algebras, with an interpretation of the derived equivalences of Brauer graph algebras as flips of diagonals in triangulations of marked oriented surfaces. In this section, we show the existence of superdecomposable modules over Brauer graph algebras. 

Given an undirected graph $\Gamma$, denote by $val(v)$ the valency of a vertex $v$, i.e. the number of edges incident to $v$ in $\Gamma$. We recall that a Brauer graph $\Gamma$ is a tuple $(\Gamma_0, \Gamma_1, m, o)$ where $(\Gamma_0, \Gamma_1)$ is an unoriented graph with $\Gamma_0$ the set of vertices and $\Gamma_1$ the set of edges and
\begin{itemize}
\item $m$ is a multiplicity function $m: \Gamma_0\to \mathbb{Z}^+$, assigning a positive number to each $v\in \Gamma_0$.
\item $o$ is an orientation of the edges attached to each $v\in \Gamma_0$. For a vertex $v$ such that $val(v) = t$ we denote $o(v)= (x_1, \hdots, x_t)$. In the particular case that $val(v)= 1$ and the only edge attached to $v$ is $x$, if $m(v)= 1$ the cyclic ordering is given by the single element $x$, but when $m(v) > 1$ we set $o(v)= (x, x)$ so that in the cyclic order the successor (and the predecessor) of $x$ is $x$.
\end{itemize}

Following \cite{Schrl18}, given a Brauer graph $\Gamma$, define the Brauer quiver $Q_\Gamma= (Q_0, Q_1)$ as follows:
\begin{enumerate}
\item the set of vertices $Q_0$ is in bijection with the edges of $\Gamma$;
\item there is an arrow $x\to y$ each time $x$ and $y$ are edges incident to a vertex $v\in \Gamma_0$ and $y$ is the immediate successor of $x$ in the cyclic order $o(v)$;
\item each non-distinguished vertex $v$ of $\Gamma$ with $m(v)val(v) \geq 2$ gives rise an oriented cycle $C_v: x_1\xrightarrow{\alpha_1}x_2\to \hdots x_t\xrightarrow{\alpha_t}x_1$ in $Q$, up to cycle equivalence, established by the orientation $o$, i.e. if $o(v)= (x_1,\hdots, x_t)$ (that we will set as clockwise in our examples when $t> 1$). We say that $C_v$ is a special cycle at $v$.
\end{enumerate}

Define an ideal of relations $I_\Gamma$ in $KQ_\Gamma$ generated by three types of relations. For this, we identify the set of edges $\Gamma_1$ of a Brauer graph $\Gamma$ with the set of vertices $Q_0$ of the corresponding quiver $Q_\Gamma$ and that we denote the set of vertices of the Brauer graph by $\Gamma$.

Type I: $C_v^{m(v)}-C_{v'}^{m(v')}$, for any $i\in Q_0$ and for any special $i$-cycles $C_v$ and $C_{v'}$ at $v$ and $v'$ such that both $v$ and $v'$ are not truncated (i.e. $val(v)m(v)\neq 1$ and $val(v')m(v')\neq 1$).

Type II: $C_v^{m(v)}\alpha_1$, for any $i\in Q_0$, $v\in \Gamma_0$ and any special $i$-cycle $C_v= \alpha_1\alpha_2\hdots\alpha_n$.

Type III: $\alpha \beta$, for any $\alpha,\beta\in Q_1$ such that $\alpha\beta$ is not a subpath of any special cycle except if $\alpha= \beta$ is a loop associated to a vertex $v$ of valency one and multiplicity $m(v) > 1$.

Then $\Lambda_\Gamma= KQ_\Gamma/I_\Gamma$ is called the Brauer graph algebra associated to the Brauer graph $\Gamma$.

Brauer graph algebras are special biserial \cite[Theorem 2.8]{Schrl18}, which are obtained as the trivial extension algebra of gentle algebras and the associated Brauer graph is given as the ribbon graph (see \cite[Section~3.1]{Schrl18} for more details) of the gentle algebra. Moreover, the following example \ref{BGE} shows that the associated gentle algebra is not necessarily unique. 

\begin{theorem}\cite[Theorem~1.2]{Sch15} 
Let $\Lambda= KQ/I$ be a gentle algebra with ribbon graph $\Gamma_\Lambda$. Then the trivial extension algebra $T(\Lambda)$ is the Brauer graph algebra with Brauer graph $\Gamma_A$ (and with multiplicity function identically equal to one).
\end{theorem}

Skew Brauer graph algebras are introduced as a generalisation of Brauer graph algebras in \cite[Section~3.1]{So24}, which are found to be the skew-group algebras of Brauer graph algebras equipped with a certain $\mathbb{Z}_2$-action \cite[Proposition~3.9]{So24}. Moreover, (Skew-) Brauer graph algebras (of multiplicity 1) appear as the trivial extension algebra of (skew-) gentle algebras (resp. \cite[Theorem~3.11]{So24} and \cite[Corollary~3.14]{Schrl18}). So if the associated gentle algebra is non-domestic, then the skew-gentle algebra has a superdecomposable module by Theorem \ref{SDSG} and thus the associated (skew-) Brauer graph algebra also has one by Theorem \ref{TSDM}.

\vspace{0.05in}
\noindent{\textbf{Superdecomposable modules over trivial extensions and repetitive algebras}:}
Trivial extensions of Artin algebras play an important role in representation theory. Every finite-dimensional $K$-algebra $\Lambda$ is a quotient of a symmetric $K$-algebra, known as its trivial extension $T(\Lambda)$, where $T(\Lambda)$ is given by the semidirect product of $\Lambda$ with its minimal injective co-generator $D(\Lambda)= \mathrm{Hom}_K(\Lambda, K)$, i.e. $T(\Lambda)= \Lambda \ltimes D(\Lambda)$ with the underlying vector space $\Lambda \bigoplus D(\Lambda)$ and with the multiplication defined by: $$(\lambda_1, f)(\lambda_2, g) = (\lambda_1\lambda_2, \lambda_1g+ f\lambda_2),$$ for any $\lambda_1, \lambda_2\in \Lambda$ and $f, g\in D\Lambda$. We follow the articles (\cite{FST22}, \cite{Sch15}) for the detailed construction of the underlying quiver and relations of $T(\Lambda)$ used in the next section.

A module $M$ over an algebra $\Lambda$ is indeed a module over its trivial extension algebra $T(\Lambda)$ by defining the action of the nilpotent ideal $D(\Lambda)$ to be zero. Specifically, for $(\lambda, f)\in T(\Lambda)$ $m\in M$, the action is defined as $(\lambda, f).m= \lambda m$, which makes $M$ a $T(\Lambda)$-module. 

We show that the Krull-Gabriel dimension of a trivial extension algebra $T(\Lambda)$ exceeds that of its original algebra $\Lambda$ in \cite{Sard25}. Moreover, any direct summand of a $\Lambda$-module $M$ also appears in its decomposition as a $T(\Lambda)$-module, which indeed establishes the existence of a superdecomposable module in $T(\Lambda)$ if $\Lambda$ has one. 

\begin{theorem}\label{TSDM}
Suppose $\Lambda= \mathcal{K}Q_\Lambda/I_\Lambda$ be a finite-dimensional algebra and let $T(\Lambda)= \mathcal{K}Q_{T(\Lambda)}/I_{T(\Lambda)}$ be its trivial extension algebra. Then there exists a superdecomposable module in $T(\Lambda)$ if there exists a superdecomposable module in $\Lambda$.
\end{theorem}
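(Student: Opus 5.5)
The plan is to realise $\Lambda$ as a quotient of $T(\Lambda)$ and then argue exactly as in Proposition \ref{QSM}.

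\textbf{Step 1: $\Lambda$ is a quotient of $T(\Lambda)$.} The subspace $0\oplus D(\Lambda)$ is a two-sided ideal of $T(\Lambda)=\Lambda\ltimes D(\Lambda)$. It squares to zero, since $(0,f)(0,g)=(0,0)$. The projection $\pi\colon T(\Lambda)\to\Lambda$, $(\lambda,f)\mapsto\lambda$, is a surjective algebra homomorphism with kernel $D(\Lambda)$, so $\Lambda\cong T(\Lambda)/D(\Lambda)$.

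\textbf{Step 2: the restriction-of-scalars functor $\pi^\ast\colon \mathrm{Mod}\mbox{-}\Lambda\to\mathrm{Mod}\mbox{-}T(\Lambda)$.} This is the action described above, $(\lambda,f)\cdot m=\lambda m$. Since $\pi$ is surjective, $\pi^\ast$ is fully faithful. For $\Lambda$-modules $M,N$, a $K$-linear map $M\to N$ is $T(\Lambda)$-linear if and only if it is $\Lambda$-linear. In particular, $\mathrm{End}_{T(\Lambda)}(M)=\mathrm{End}_\Lambda(M)$.

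\textbf{Step 3: direct summands correspond.} Direct summands of $\pi^\ast M$ are determined by idempotents of $\mathrm{End}_{T(\Lambda)}(M)$. By Step 2 these are the same idempotents as in $\mathrm{End}_\Lambda(M)$. Equivalently, a $T(\Lambda)$-submodule $X\subseteq M$ is annihilated by $D(\Lambda)$, because $D(\Lambda)$ annihilates all of $M$. Hence $X$ is just a $\Lambda$-submodule. So the $T(\Lambda)$-direct summands of $M$ are exactly its $\Lambda$-direct summands, with the same module structures. A summand is therefore indecomposable over $T(\Lambda)$ if and only if it is indecomposable over $\Lambda$.

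\textbf{Step 4: conclude.} Let $N$ be a superdecomposable $\Lambda$-module. Then $N\neq0$ and $N$ has no indecomposable direct summand. By Step 3, $\pi^\ast N$ has no indecomposable direct summand over $T(\Lambda)$, so $\pi^\ast N$ is superdecomposable.

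\textbf{Step 5: pure-injectivity.} If the superdecomposable modules in the statement are meant to be pure-injective, one more check is needed. This is the only step that needs care. Pure-injectivity is not automatically inherited along arbitrary functors, so I would argue as follows.
\begin{itemize}
\item Every pp-formula over $\Lambda$ is realised by a pp-formula over $T(\Lambda)$, by choosing preimages of coefficients under $\pi$. Every pp-formula over $T(\Lambda)$, evaluated on a module killed by $D(\Lambda)$, reduces to one over $\Lambda$.
\item Hence the pp-definable subgroups of $N$ coincide for the two rings.
\item Pure-injectivity is equivalent to algebraic compactness, meaning every finitely solvable system of pp-conditions has a solution.
\end{itemize}
It follows that $N$ is pure-injective over $T(\Lambda)$ exactly when it is over $\Lambda$.

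Alternatively, Step 5 can be done via the lattice of pointed modules, using that $\pi^\ast$ is exact and fully faithful. A wide sublattice of $\mathcal{P}^\theta_\Lambda$ then maps to a wide sublattice of $\mathcal{P}^{\pi^\ast\theta}_{T(\Lambda)}$, since pointed pushouts and direct sums are preserved. Over a countable field, Theorem \ref{Sup} then applies directly.
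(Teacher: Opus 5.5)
Your argument is correct and is essentially the paper's. The paper also regards modules over $\Lambda\cong T(\Lambda)/D(\Lambda)$ as $T(\Lambda)$-modules with $D(\Lambda)$ acting as zero and observes that direct-sum decompositions do not change, as in Proposition~\ref{QSM}; your Step~3 supplies the direction its one-line justification glosses over (every $T(\Lambda)$-summand is already a $\Lambda$-summand, since $\mathrm{End}_{T(\Lambda)}(M)=\mathrm{End}_\Lambda(M)$), and your Step~5 settles pure-injectivity, which the paper leaves implicit.
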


S. Kasjan and G. Pastuszak show the existence of an independent pair of dense chains under an additive fully faithful right-exact functor or an additive fully faithful exact functor \cite[Theorem 3.13]{KaPa14}. Since a Galois covering between two linear categories is a fully faithful and exact functor, particularly when considering the associated push-down functor on module categories, it preserves the superdecomposable modules. Hence, the next remark. 
\begin{rmk}
The repetitive algebra $\hat{\Lambda}$ \cite{Schr99} of a finite-dimensional algebra $\Lambda$ is a Galois covering of its trivial extension algebra $T(\Lambda)$ induced by the Nakayama automorphism, with the Galois group acting as $\mathbb{Z}$. Since a Galois covering preserves a superdecomposable module, the above theorem indicates that $\hat{\Lambda}$ has a superdecomposable module if $\Lambda$ has one. 
\end{rmk}

But the converse is not true in general, i.e., the existence of a superdecomposable module in $T(\Lambda)$ does not imply that $\Lambda$ has one. Below, we consider a few results over gentle algebras $\Lambda$ where $T(\Lambda)$ has a superdecomposable module but $\Lambda$ has none. 
\begin{theorem}\cite{RiMi97}
Assume that $\Lambda$ is a finite-dimensional gentle algebra and $Q(\Lambda)$ has at least two cycles. Then $\hat{\Lambda}$ is non-domestic (thus even of non-polynomial growth).    
\end{theorem}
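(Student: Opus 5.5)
The plan is to realise non-domesticity of $\hat{\Lambda}$ directly through bands. I would use that $\hat{\Lambda}$ is a locally bounded special biserial category and that $\hat{\Lambda}\to T(\Lambda)$ is a Galois covering with group $\mathbb{Z}$ (generated by the Nakayama automorphism, as in the Remark above). By the theorem of \cite{Sch15}, $T(\Lambda)$ is the Brauer graph algebra of the ribbon graph $\Gamma_\Lambda$ with multiplicity one.

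First I would translate the hypothesis into a statement about $\Gamma_\Lambda$. Edges of $\Gamma_\Lambda$ correspond to vertices of $Q(\Lambda)$, and each arrow of $Q(\Lambda)$ is an angle at a vertex of $\Gamma_\Lambda$. Hence two independent (unoriented) cycles in $Q(\Lambda)$ should produce two independent closed walks in $\Gamma_\Lambda$, i.e. first Betti number of $\Gamma_\Lambda$ at least $2$. A band of $T(\Lambda)$ is, combinatorially, a closed walk in $\Gamma_\Lambda$ that turns by a single angle (a direct arrow) or by its complement (an inverse letter) at each vertex. From the two cycles I would therefore produce two bands $B_1,B_2$ of $T(\Lambda)$ that pass through a common edge with compatible first and last letters. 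Then every word in $\{B_1,B_2\}$ that is not a proper power is again a band, exactly as in the construction of $\Sigma(U,V)$ before Theorem \ref{DCWE}. This gives exponentially many bands of length at most $d$ in $T(\Lambda)$.

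Second, I would lift to $\hat{\Lambda}$. Grade the arrows of $T(\Lambda)$ by putting degree $1$ on the arrows coming from $D(\Lambda)$ (the connecting arrows) and $0$ on the arrows of $Q(\Lambda)$. A closed string of $T(\Lambda)$ lifts to a closed string of $\hat{\Lambda}$ exactly when its total signed degree vanishes, and string combinatorics is preserved by the push-down. So I must produce two degree-zero bands in $T(\Lambda)$ that still combine freely. Let $d_i=\deg B_i$.
\begin{itemize}
\item If some $d_i=0$, say $d_1=0$, I would replace $B_2$ by a degree-zero band built from $B_2$ and $B_1$.
\item Otherwise I would use the powers $X=B_1^{|d_2|}$ and $Y=B_2^{|d_1|}$. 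When $d_1,d_2$ have opposite signs, every word in $X,Y$ with equally many $X$'s and $Y$'s has degree zero. When they have the same sign, I would use $Y^{-1}$ in place of $Y$.
\end{itemize}
These balanced words still give exponentially many degree-zero bands, hence exponentially many bands of $\hat{\Lambda}$ supported on a finite full subcategory. This contradicts domesticity, and by \cite{Sk90} it even contradicts polynomial growth.

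The main obstacle is the last step: when $Y^{-1}$ is used, or when bands are concatenated with powers, the junctions between consecutive blocks must still satisfy the string conditions. In particular there must be no $\alpha\alpha^{-1}$ cancellation, and no subword may be a relation, such as a path of type II or III in $I_\Gamma$. My first attempt is to choose $B_1$ and $B_2$ to start at the same edge, one turning at each of its two endpoint vertices. This is possible because the two cycles of $Q(\Lambda)$ are independent, and it makes the direct/inverse letters at the junction automatically alternate as required. If that fails, I would insert a fixed connecting string $C$ and its inverse, using words of the form $X\,C\,Y^{-1}C^{-1}$ to absorb the mismatch. The degree of $C$ cancels, so these words remain degree zero and still lift to $\hat{\Lambda}$.
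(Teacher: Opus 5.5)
The paper gives no proof of this statement; it only quotes it from \cite{RiMi97}, so your argument has to stand on its own.

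Much of your set-up is sound:
\begin{itemize}
\item Counting maximal paths shows that $\Gamma_\Lambda$ has $|Q_0|$ edges and $2|Q_0|-|Q_1|$ vertices. So its first Betti number equals that of $Q(\Lambda)$.
\item A multiplicity-one Brauer graph algebra with first Betti number at least $2$ does have two bands $B_1,B_2$, with the same first letter $\alpha$ and the same last letter $\beta^{-1}$, all of whose positive words are bands.
\item Putting the connecting arrows in degree $1$ does realise the covering $\hat\Lambda\to T(\Lambda)$, and a band lifts to $\hat\Lambda$ exactly when its degree is $0$.
\item When $d_1d_2<0$ (or $d_1=d_2=0$) your argument works. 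You should use blocks such as $XY$ and $X^2Y^2$ rather than arbitrary balanced words, so that the running degree stays bounded and all lifts lie in one finite full subcategory.
\end{itemize}
Note, however, that the bands of $\Lambda$ itself are already degree-zero bands of $T(\Lambda)$. The theorem therefore has content only when $\Lambda$ is domestic (e.g.\ $A_2$ in Example~\ref{BGE}). In that case everything rests on balancing connecting arrows of both signs, and this is the step your proposal does not achieve.

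\textbf{The gap.} Your remedies for $d_1d_2>0$ fail, and so does the case $d_1=0\neq d_2$: there a degree-zero word must contain $B_2$ and $B_2^{-1}$ equally often, which is the same problem.

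First, with $B_1,B_2$ as above, $X$ ends with $\beta^{-1}$ and $Y^{-1}$ starts with $\beta$. So $XY^{-1}$ contains $\beta^{-1}\beta$ and is not a string.

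Second, the fallback $X\,C\,Y^{-1}C^{-1}$ is never a string. At a vertex $w$ of a string algebra, call two arrow-ends at $w$ compatible if a string may arrive through one and leave through the other. Two heads, or two tails, are compatible iff they are distinct. The head of $\epsilon$ and the tail of $\gamma$ are compatible iff $\epsilon$ followed by $\gamma$ is not a relation. By special biseriality the head--tail compatibilities form a partial matching, so this graph lies inside a $4$-cycle and is triangle-free. Now let $c$ be the last letter of $C$. It enters the base vertex $w$ of $Y$ through some end $p$, and $c^{-1}$ leaves $w$ through the same end $p$. Hence $p$ must be compatible with both the first and last ends $q_1,q_n$ of $Y^{-1}$. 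But $q_1,q_n$ are compatible with each other because $Y^{-1}Y^{-1}$ is a string, so $\{p,q_1,q_n\}$ would be a triangle.

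Third, your ``first attempt'' only restates what is needed: two freely combining bands whose degrees have opposite signs, or both vanish. Nothing in the independence of the two cycles controls these signs. Moreover, port compatibility is not even sufficient at a junction. Merging the last direct run of one block with the first run of the next can create a full special cycle $C_v$, which is zero in the string algebra of $T(\Lambda)$.

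The missing idea is a mechanism for producing such a sign-balanced pair, or for constructing two freely combining bands directly in the special biserial category $\hat\Lambda$. That is precisely the content of Ringel's theorem.
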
 

Moreover, $\Lambda$ is gentle if and only if $\hat{\Lambda}$ is special biserial \cite{Schr99}.

So if we consider a domestic gentle algebra with at least two cycles, Theorem \ref{supstr} ensures that the associated non-domestic special biserial algebras have a superdecomposable module and the original algebra does not. We produce such an algebra in Example \ref{BGE} below. 

\vspace{0.05in}
\noindent{\textbf{Superdecomposable modules over Brauer graph algebras}:}
First, recall that $\Lambda_\Gamma$ is a special biserial algebra. Here is the comparison of the representation theory of special biserial algebras and string algebras from \cite[Section~5]{Sch00}. Let $\Lambda= KQ/I$ be a special biserial algebra with at least one band and $\hat{I}$ be the set of all the paths $p$ for which $(p-\lambda q) \in I$ for some path $q$ sharing the source and target with $p$ and $\lambda\in K$. In particular, $\hat{I}$ contains all the monomial relations in $I$. Thus the algebra $\hat{\Lambda}:= KQ/\hat{I}$ is a string algebra. Moreover, $I\subseteq \hat{I}$. The surjective map $\Lambda\to \hat{\Lambda}$ induces a canonical $\Lambda$-module structure on every $\hat{\Lambda}$-module. A finite-dimensional indecomposable $\Lambda$-module is either a string module, band module or a projective-injective module. In fact, the only indecomposable $\Lambda$-modules not annihilated by $\hat{I}$ are the projective-injective modules, and there is only a finite number of such modules. Each such module can be written as $P(p-\lambda q)$ for some $(p-\lambda q)\in I$ with $\lambda\in K^*$ and $p,q\notin I$, and occurs in an Auslander-Reiten sequence $0 \to M(C)\to M(C_1)\bigoplus P(p-\lambda q)\bigoplus M(C_2)\to M(D)\to 0$ where $M(C), M(C_1), M(C_2), M(D)$ are string modules over $\hat{\Lambda}$. 

The Brauer graph associated with the domestic Brauer graph algebras is described in \cite[Theorem 5.2]{Schrl18}. Brauer graph algebras are symmetric special biserial algebras, which form a subclass of self-injective special biserial algebras. Bocian and Skowro\'{n}ski give a characterisation of the (non-) domestic Brauer graph algebras.  

\begin{theorem}\cite{ErSk92}
Let $\Lambda\approx KQ/I$ be a self-injective special biserial algebra. Then $\Lambda$ is not of polynomial growth iff $(Q, I)$ has infinitely many bands.    
\end{theorem}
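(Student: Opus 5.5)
The plan is to reduce to the string algebra $\hat{\Lambda}=KQ/\hat{I}$ described just above, and then to count band families combinatorially. Self-injectivity enters only through the description recalled from \cite[Section~5]{Sch00}. Every indecomposable non-projective $\Lambda$-module is annihilated by $\hat{I}$, so it is a $\hat{\Lambda}$-module. There are only finitely many projective-injective indecomposables $P(p-\lambda q)$. Hence, for every $d$, the parametrising families of $d$-dimensional indecomposable $\Lambda$-modules and $\hat{\Lambda}$-modules agree up to finitely many isolated modules, and $\mu_\Lambda(d)=\mu_{\hat{\Lambda}}(d)$. Moreover $(Q,I)$ and $(Q,\hat{I})$ have the same bands, since the commutativity relations of $I$ become zero relations in $\hat{I}$ and bands never pass through a maximal path $p$ or $q$ of $P(p-\lambda q)$. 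So it suffices to prove the statement for a string algebra.

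For a string algebra, the indecomposables are string modules $M(S)$ and band modules $M(b,n,\lambda)$ with $\lambda\in K^\ast$. String modules of a fixed dimension are finitely many. The $1$-parameter families in dimension $d$ are indexed by pairs $(b,n)$ with $b$ a band taken up to rotation and inversion and $n\,\ell(b)=d$. Hence
$$\mu(d)=\#\{(b,n) : n\,\ell(b)=d\}.$$

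If there are only finitely many bands, $\mu(d)$ is bounded by the number of bands. Then $\Lambda$ is domestic, in particular of polynomial growth, which gives the direction ``not polynomial growth $\Rightarrow$ infinitely many bands''.

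For the converse, assume there are infinitely many bands. The key step is a combinatorial lemma: there exist two distinct bands $U,V$, starting with the same direct arrow and ending with the same inverse arrow, such that every word in $U,V$ is a string. This is exactly the situation of the $\mathcal{Q}$-generating pairs used earlier, cf. \cite[p.~41]{Sc97}. Granting this, every primitive word $w$ in $\{U,V\}$ gives a band.

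Distinct primitive words give distinct bands up to rotation and inversion. To see this, I would use unique factorisation of a cyclic word over the code $\{U,V\}$, which holds because neither of $U,V$ is a prefix of the other. The number of primitive cyclic words of length $k$ grows like $2^k/k$. Taking $d=k\max(\ell(U),\ell(V))$ along a suitable subsequence, or padding with powers of $U$ to hit exact lengths, gives $\mu(d)>r^d$ for infinitely many $d$ with $r>1$. Hence $\Lambda$ is not of polynomial growth.

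The main obstacle is the combinatorial lemma. I would encode bands as closed walks in a finite directed graph. Its states are pairs (letter, direction), and its edges are the allowed concatenations of letters avoiding relations and inverse pairs, so that bands correspond to primitive closed walks up to rotation. A finite graph whose strongly connected components are all single cycles has only finitely many primitive closed walks. So infinitely many bands force a strongly connected component containing two distinct simple cycles through a common state. Rotating both cycles to start at that state yields $U\neq V$ with the same first and last letters, and any concatenation of them is again a walk, hence a string. The delicate points are ensuring that the resulting walks avoid the relations of length greater than two in a string algebra, and that $U$ is not a prefix of $V$ or vice versa. For the first, I would refine the states to record enough of the preceding path (at most the maximal relation length). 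For the second, I would take $U=C_1C_2$ and $V=C_2C_1$ built from the two cycles.
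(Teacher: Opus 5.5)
The paper gives no proof of this statement; it only cites \cite{ErSk92}, so your proposal has to be judged on its own. The route is the standard one. You pass to the string algebra $KQ/\hat I$ via the description recalled from \cite[Section~5]{Sch00}, bound $\mu(d)$ by counting band families, and for the converse produce two bands that combine freely, as in \cite[p.~41]{Sc97}. That description holds for every special biserial algebra, so self-injectivity is never used, and once repaired your argument gives the dichotomy for all special biserial algebras. As written, however, two claims are false.

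First, you cannot justify that $(Q,I)$ and $(Q,\hat I)$ have the same bands by saying bands never pass through $p$ or $q$. Suppose a band of $(Q,I)$ means a cyclic word none of whose subwords lies in $I$. Then the claim is false, and so is the theorem. Take the trivial extension of the Kronecker algebra: arrows $a,b\colon 1\to 2$ and $a',b'\colon 2\to 1$, with $I=\langle aa'-bb',\,a'a-b'b,\,ab',\,ba',\,a'b,\,b'a\rangle$. Since $aa',bb'\notin I$, both $ab^{-1}$ and $aa'b'^{-1}b^{-1}$ are such cyclic words. Both start with $a$ and end with $b^{-1}$, and every word in them again avoids $I$, so there are infinitely many such ``bands''. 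Yet $KQ/\hat I$ has radical square zero with only the bands $ab^{-1}$ and $a'b'^{-1}$, and the algebra is domestic. The would-be band modules on $aa'b'^{-1}b^{-1}$ satisfy $aa'=bb'$ for only one parameter value. So you must take the bands of $(Q,I)$ to be those of $(Q,\hat I)$ by definition, which is the usual convention, rather than state it as a lemma.

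Second, unique factorisation of \emph{cyclic} words does not follow from $\{U,V\}$ being a prefix code. For the prefix code $\{xy,yx\}$ the cyclic words $xy$ and $yx$ coincide. With your choice $U=C_1C_2$, $V=C_2C_1$ it actually fails: $U$ and $V$ are rotations of each other, hence the same band.

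The right tool is the graph itself. Each state is determined by the preceding letters, so a cyclic letter word determines its closed walk. Cutting that walk at its visits to the common state $x$ recovers the cyclic word in $C_1,C_2$, since each of these visits $x$ only at its ends. Hence distinct cyclic words in $C_1,C_2$ give distinct cyclic letter words, primitive ones give primitive ones, and inversion identifies at most two.

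You then need no prefix condition at all. Take the cyclic words containing $C_1$ and $C_2$ exactly $k$ times each. They all have length $d_k=k(\ell(C_1)+\ell(C_2))$, and roughly $\binom{2k}{k}/(2k)$ of them are primitive. So $\mu(d_k)>r^{d_k}$ for a suitable $r>1$, which replaces the vague ``padding'' step. With your $U,V$ the map is at most $2$-to-$1$, which would also suffice.

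Finally, the lower bound $\mu(d)\geq\#\{(b,n)\colon n\,\ell(b)=d\}$ rests on the standard fact that one $1$-parameter family cannot contain infinitely many modules from two different band families. Cite it rather than treat it as evident.
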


\begin{rmk}\label{SDBGA}
The relation between strings and morphisms between the string modules over Brauer graph algebras is defined in a similar way as described in Lemma \ref{MBGA}, which ensures the existence of superdecomposable modules over non-domestic Brauer graph algebras as in Theorem \ref{IPDC} and their associated skew-Brauer graph algebras as in Theorem \ref{NSIDPM}.     
\end{rmk}

\begin{example}\label{BGE}
Consider the Brauer graph $\Gamma= (\Gamma_0, \Gamma_1, m, o)$ in Figure \ref{BG}. Here, we set $m(a)= m(b)= 1$. The cyclic ordering at vertex $a$ is given by $1< 2 < 3 < 1$ and at vertex $b$ by $1< 2 < 3 < 1$ and $val(a)= val(b) = 3$. 

\begin{figure}[h]
\begin{minipage}[b]{0.47\linewidth}
\centering
\includegraphics[width=0.4\linewidth, height=0.10\textheight]{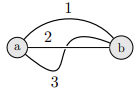}
\caption{Brauer graph $\Gamma$}
    \label{BG}
    \end{minipage}
\hspace{0.4cm}
\begin{minipage}[b]{0.48\linewidth}
\centering
\includegraphics[width=0.48\linewidth, height=0.13\textheight]{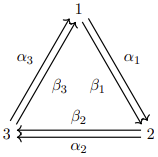}
\caption{Brauer algebra $\Lambda_\Gamma$}
    \label{BGAG}
    \end{minipage}
\end{figure}

The associated Brauer graph algebra is $\Lambda_\Gamma= KQ_\Gamma/I_\Gamma$, where $Q_\Gamma$ is given in Figure \ref{BGAG}. Here, the special $1$-cycle at $a$ is given by $\alpha_1\alpha_2\alpha_3$, the special $1$-cycle at $b$ is given by $\beta_1\beta_2\beta_3$, etc. Moreover, the set of relations is given as follows:

Type I: $\alpha_1\alpha_2\alpha_3-\beta_1\beta_2\beta_3, \alpha_2\alpha_3\alpha_1-\beta_2\beta_3\beta_1, \alpha_3\alpha_1\alpha_2 -\beta_3\beta_1\beta_2$;

Type II: $\alpha^4, \beta^4$;

Type III: $\alpha_i\beta_{i+1}, \beta_i\alpha_{i+1}$ for $i= 1, 2$ and $\alpha_3\beta_1, \beta_3\alpha_1$.

A minimal set of relations is given by all relations of types I and III.  

\begin{figure}[h]
\begin{minipage}[b]{0.40\linewidth}
\centering
\begin{tikzcd}
\tilde{v}_1 \arrow[r, "\tilde{\alpha}", bend left=49] \arrow[r, "\tilde{\beta}"', bend right=49] & \tilde{v}_2 \arrow[r, "\tilde{\gamma}", bend left=49] \arrow[r, "\tilde{\delta}"', bend right=49] & \tilde{v}_3
\end{tikzcd} 
\caption{$A_1$ with $\tilde{\rho}= \{\tilde{\alpha}\tilde{\delta}, \tilde{\beta}\tilde{\gamma}\}$}
    \label{figure8}
    \end{minipage}
\hspace{0.4cm}
\begin{minipage}[b]{0.55\linewidth}
\centering
\includegraphics[width=0.48\linewidth, height=0.13\textheight]{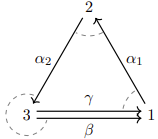}
\caption{$A_2$ with $\rho= \{\alpha_1\alpha_2, \gamma\alpha_1, \alpha_2\beta\}$}
    \label{BGtriangle}
    \end{minipage}
\end{figure}
The algebras $A_1= KQ_1/\langle \tilde{\alpha}\tilde{\delta}, \tilde{\beta}\tilde{\gamma}\rangle$ and
$A_2= KQ_2/\langle \alpha_1\alpha_2, \gamma\alpha_1, \alpha_2\beta \rangle$ have the same ribbon graph $\Gamma$ as gentle algebras given in Figure \ref{BG}. Note that the associated Brauer graph algebra $\Lambda_\Gamma$ in Figure \ref{BGAG} is the trivial extension of both $A_1$ and $A_2$. Here, the algebra $A_1$ is non-domestic as the bands $\tilde{\alpha}\tilde{\gamma}\tilde{\delta}^{-1}\tilde{\beta}^{-1}$ and $\tilde{\alpha}\tilde{\beta}^{-1}$ commute. On the other hand, $A_2$ has only one band $\gamma\beta^{-1}$, so it is domestic. Thus $A_1$ has a superdecomposable module whereas $A_2$ has none \cite{Puni04}. We show that the Brauer graph algebra $\Lambda_\Gamma= T(A_1)=T(A_2)$ has a superdecomposable module, as it is a trivial extension algebra of $A_1$ by Theorem \ref{TSDM}. 

Now, consider the $\bar{U}=\alpha_1\beta_1^{-1}$ and $\bar{V}=\alpha_1\alpha_2\beta_2^{-1}\beta_1^{-1}$. Then for any $S, T\in \Sigma(U,V), S', T'\in \Sigma(U^{-1}, V^{-1})$, the set $L^T_S(U,V)$ and $L^{T'}_{S'}(U^{-1},V^{-1})$ are dense chains without endpoints by Theorem \ref{DCWE} and $((M(X), z_1^X)_{X\in L^T_S(U,V)}; (M(Y), z_1^Y)_{Y\in L^{T'}_{S'}(U^{-1},V^{-1})})$ is an independent pair of dense chains of pointed $\Lambda_\Gamma$-modules by Theorem \ref{IPDC}. Note that $\Lambda_\Gamma=T(A_2)$ has a superdecomposable module whereas $A_2$ has none.
\end{example}

We end this section with the next remark stating that a relation extension algebra also preserves a superdecomposable module, as it is a specific type of trivial extension algebra. 
\begin{rmk}
An algebra $\tilde{\Lambda}$ is cluster-tilted if and only if there exists a tilted algebra $\Lambda$ such that $\tilde{\Lambda}$ is a relation extension of $\Lambda$. A relation extension algebra of $C$ is a specific type of trivial extension algebra $C\ltimes \mathrm{Ext}_C^2(DC, C)$ by the C-C-bimodule $\mathrm{Ext}_C^2(DC, C)$. Follow \cite{ABS08} for details. So it's clear from Theorem \ref{TSDM} that the existence of a superdecomposable module over $C$ implies the existence of one over $\tilde{C}$.  
\end{rmk}

\end{document}